\documentclass[11pt,reqno]{amsart}
\usepackage[a4paper,margin=27mm]{geometry}
\usepackage[T1]{fontenc}
\usepackage{lmodern,amsmath,amssymb,amsthm,mathtools,mathrsfs,microtype}
\usepackage[hidelinks]{hyperref}
\usepackage{enumitem}
\allowdisplaybreaks[2]
\usepackage{xcolor}
\hypersetup{
colorlinks=true,
linkcolor=blue,
citecolor=blue,
urlcolor=blue
}
\numberwithin{equation}{section}
\newtheorem{theorem}{Theorem}[section]
\newtheorem{lemma}[theorem]{Lemma}
\newtheorem{assumption}[theorem]{Assumption}
\newtheorem{corollary}[theorem]{Corollary}
\theoremstyle{definition}
\newtheorem{definition}[theorem]{Definition}

\theoremstyle{remark}

\newcommand{\E}{\mathcal E}
\newcommand{\HH}{\mathscr H}
\newcommand{\Op}{\mathscr A}
\newcommand{\norm}[1]{\left\|#1\right\|}

\newcommand{\essinf}{\operatorname*{ess\,inf}}
\newcommand{\esssup}{\operatorname*{ess\,sup}}

\title[A Viscoelastic Wave Equation]{A Viscoelastic Wave Equation with Nonlinear Damping and Supercritical Variable-Exponent Logarithmic Sources}
\author[M.  Liao]{Menglan Liao}
\address{School of Mathematics, Hohai University, Nanjing, 210098, China \&
Laboratory of Mathematical Modeling and Intelligent Computing for Water Systems, Hohai University, Nanjing, 211100, China}
 \email{liaoml@hhu.edu.cn}
 
\author[M. Zhang]{Mingxue Zhang}
\address{Institute for Math and AI, Wuhan University, Wuhan, 430072, China. }
 \email{zhangmingxuex@163.com}
\thanks{This work was supported by the National Natural Science Foundation of China (12401290) and the Natural Science Foundation of Jiangsu Province (BK20230946).}
\subjclass[2020]{35L71, 35B44, 35B40, 35R09}
\keywords{Viscoelastic wave equation, variable exponents, logarithmic source, nonlinear damping, finite-time blow-up, global existence, energy decay}
\date{}
\begin{document}
\begin{abstract}
We investigate a viscoelastic wave equation with nonlinear variable-exponent damping and a variable-exponent logarithmic source, allowing supercritical growth. The main difficulties arise from the failure of the source to map the energy space into $L^2(\Omega)$, the sign-changing logarithmic potential, and the lack of Sobolev control of the displacement for supercritical
damping. Under an exponent condition linking the source and damping exponents, we prove local existence, an energy identity, and a continuation criterion.  
We prove finite-time blow-up for nontrivial solutions with
nonpositive shifted initial energy. We also establish a
blow-up criterion based on a sufficiently large initial
displacement--velocity pairing, with a threshold depending
on the positive part of the shifted initial energy, and
derive an explicit upper bound for the lifespan. This
criterion allows us to construct blow-up data at every
prescribed level of the original energy. For initial data in a suitable stable region, solutions exist globally.  For damping within the Sobolev range, a general relaxation condition yields energy decay estimates that include exponential and polynomial rates. Beyond this range, a time-weighted displacement estimate and a splitting of the memory integral give logarithmic decay.
\end{abstract}
\maketitle

\section{Introduction}
Wave equations with damping and sources describe the competition
between energy dissipation and nonlinear amplification. Depending
on the initial data and the relative growth of these terms,
solutions may exist globally and decay or become unbounded in
finite time. In a viscoelastic medium, the stress depends on both
the current strain and its past history, while the relaxation
kernel describes how the influence of earlier deformations
decreases. Mathematical foundations and stability results for
viscoelasticity are developed in
\cite{Dafermos1970,FabrizioMorro1992}. Evolution formulations
of equations with memory are studied in
\cite{FabrizioGiorgiPata2010}, and nonlinear wave equations
combining memory, frictional damping, and supercritical sources
are considered in \cite{GRSTT,GRSDecay2018}.

In this paper, we consider the initial boundary value problem
\begin{equation}
\label{eq1}
\begin{cases}
u_{tt}-\Delta u+\displaystyle\int_0^t g(t-s)\Delta u(s)\,ds
+|u_t|^{m(x)-2}u_t=|u|^{p(x)-2}u\log|u|,
&x\in\Omega,\ t>0,\\
u=0,&x\in\partial\Omega,\ t>0,\\
u(0)=u_0,\quad u_t(0)=u_1,&x\in\Omega,
\end{cases}
\end{equation}
where $\Omega\subset\mathbb R^n$, $n\ge3$, is a bounded domain
with $C^2$ boundary. The logarithmic source is defined by
continuity at zero. The memory kernel $g$ is positive,
nonincreasing, and integrable, with total mass less than one.
The measurable exponents $m(x)$ and $p(x)$ describe spatial
variation in the damping and source growth. 
The convolution starts at time zero, so the memory
accumulates during the evolution. No displacement history
before the initial time is prescribed.

Variable exponents provide a way to describe nonlinear
responses whose growth changes with position. One motivation
comes from electrorheological fluids, whose constitutive behavior
depends on an applied electric field. Their modeling and
mathematical analysis are developed in the monograph of
R\r{u}\v{z}i\v{c}ka \cite{Ruzicka2000}. Related questions
concerning heterogeneous materials and nonlinear variational
problems arise in the homogenization theory of Zhikov
\cite{Zhikov1987}. These applications have motivated the
development of Lebesgue and Sobolev spaces with variable
exponents. Their basic properties were studied by
Kov\'a\v{c}ik and R\'akosn\'ik
\cite{KovacikRakosnik1991}, and a comprehensive treatment
is given in \cite{DHHR}. Although the Luxemburg norm remains
homogeneous, the integral $\int_\Omega |u|^{p(x)}\,dx$
cannot generally be expressed as a fixed power of that norm.
Energy estimates must therefore account for both the lower
and upper bounds of the exponent.
Evolution equations with nonstandard growth are systematically
studied by Antontsev and Shmarev
\cite{AntontsevShmarev2015}, including questions of existence,
uniqueness, localization, and blow-up. For hyperbolic problems,
Antontsev \cite{Antontsev2011} investigated blow-up for wave
equations with a space- and time-dependent exponent in the
principal operator. Messaoudi and Talahmeh
\cite{MessaoudiTalahmeh2017} established blow-up results
for quasilinear wave equations with variable-exponent
nonlinearities, including negative-energy solutions and
suitable positive-energy data. Messaoudi, Talahmeh, and
Al-Smail \cite{MessaoudiTalahmehAlSmail2017} used the
Faedo--Galerkin method to prove existence and uniqueness
under suitable exponent assumptions for a wave equation
with variable-exponent frictional damping and source,
and also obtained finite-time blow-up. The stability and lifespan of solutions have been studied
for several related models. Li, Guo, and Liao
\cite{LiGuoLiao2020} obtained decay and asymptotic stability
for a quasilinear hyperbolic equation with variable growth
in the principal operator, damping, and source. Their proof
uses energy estimates and an integral inequality of Komornik
type \cite{Komornik1994}. For a viscoelastic Kirchhoff equation with nonlinear
inertia, Liao, Guo, and Zhu \cite{LiaoGuoZhu2020} derived
upper blow-up time bounds for suitable positive-energy data
and lower lifespan bounds in different exponent ranges.
Liao and Tan \cite{LiaoTan2023} established blow-up,
lifespan, global existence, and decay results for a Petrovsky
equation with strong damping. Their high-energy blow-up
result concerns linear frictional damping, while the strong
damping provides additional spatial control of the velocity.

Supercritical sources present another difficulty because
they need not be locally Lipschitz from $H_0^1(\Omega)$
into $L^2(\Omega)$. The usual perturbation argument in
the energy space is then unavailable, and the integrability
provided by nonlinear damping becomes important.
Bociu and Lasiecka
\cite{BociuLasiecka2008U,BociuLasiecka2010} established
uniqueness, continuous dependence, and local Hadamard
well-posedness for wave equations with supercritical
interior and boundary sources. Guo and Rammaha
\cite{GRWave} developed a local and global well-posedness
theory for coupled wave equations using monotone operators
and nonlinear semigroups. Guo, Rammaha, Sakuntasathien,
Titi, and Toundykov \cite{GRSTT} treated a viscoelastic
equation with infinite past history. Their history
formulation and source approximation provide the framework
for our local construction. To the best of our knowledge, the present work gives the
first systematic results on local existence for viscoelastic wave equations with supercritical
variable-exponent logarithmic sources.

The competition between damping and supercritical sources
plays a central role in determining whether solutions exist
globally and how their energy evolves. When the source dominates the dissipation for suitable initial
data, solutions may blow up in finite time. Sufficiently strong
dissipation can lead to global existence and energy decay.
Bociu and Lasiecka \cite{BociuLasiecka2008B} established
blow-up criteria under conditions on the initial data and
the relative growth of the source and damping terms.
Guo and Rammaha \cite{GRBlow2013} proved blow-up for
negative-energy solutions of coupled wave equations.
In \cite{GRDecay2013}, they used the potential-well method
to identify a stable set of initial data yielding global
existence and exponential or algebraic energy decay. They
also obtained blow-up results for suitable data with
nonnegative energy. For a viscoelastic problem with
history, Guo, Rammaha, and Sakuntasathien \cite{GRBlow} established blow-up when the source exponent exceeds both the damping and memory-growth exponents, for negative-energy data and for positive-energy data with sufficiently large quadratic energy. Subsequently, they 
\cite{GRSDecay2018} incorporated the memory term into the
potential-well framework and established global existence
and decay for data in its stable region. Related results
for a piezoelectric beam system with nonlinear damping and
supercritical sources were obtained by Liao and Feng
\cite{LiaoFeng2026}.

The logarithmic source requires additional care in both
the blow-up and stability arguments. Its contribution
$f(x,u)u=|u|^{p(x)}\log|u|$ is negative for $0<|u|<1$,
and its potential is not homogeneous. Consequently,
the usual relation between a power source and its
potential does not apply directly. Liao
\cite{LiaoLog2022} studied a constant-exponent logarithmic
source in a viscoelastic equation with strong damping
and obtained high-energy blow-up and lifespan estimates.
Very recently, we studied in \cite{LZ} a viscoelastic wave
equation with a variable-exponent logarithmic source and
strong damping. We established blow-up at arbitrary energy
levels and derived lifespan estimates. Our analysis uses a
pointwise correction in the source--potential inequality,
whose integral is absorbed into a constant shift of the
energy. In the present work, we adapt this approach to a
problem with frictional damping. The resulting estimates
control the displacement--velocity pairing through the
frictional dissipation while retaining the memory
contribution.

Energy decay for viscoelastic equations depends on the
relaxation kernel and the available dissipation.
Classical sufficient conditions include
\[
g'(t)\le-\kappa g(t),
\quad
g'(t)\le-\kappa g(t)^q,
\quad \kappa>0,\quad q>1.
\]
Under appropriate additional hypotheses, these conditions
lead to exponential or polynomial energy bounds.
Alabau-Boussouira, Cannarsa, and Sforza
\cite{AlabauCannarsaSforza2008} developed a unified
approach to such estimates for second-order evolution
equations with memory and semilinear sources.
Messaoudi \cite{Messaoudi2008} considered
\[
g'(t)\le-\xi(t)g(t),
\]
where $\xi$ is positive and nonincreasing. The resulting
energy estimates are expressed through
$\int_0^t\xi(s)\,ds$, allowing more general rates.
Alabau-Boussouira and Cannarsa
\cite{AlabauCannarsa2009} developed a convexity method
for sharp decay estimates under nonlinear kernel
conditions of the form
\[
g'(t)\le-\mathcal H(g(t)).
\]
Mustafa \cite{Mustafa2018} combined the time-dependent
coefficient and nonlinear relaxation function through
\[
g'(t)\le-\xi(t)\mathcal H(g(t)),
\]
where $\xi$ is positive and nonincreasing, and
$\mathcal H$ is linear or strictly increasing and
strictly convex near the origin. For the linear
viscoelastic wave equation, this condition gives
general decay estimates that include exponential
and polynomial bounds. 
Pata \cite{Pata2010} studied exponential stability
for kernels allowing almost flat portions, while
Conti and Pata \cite{ContiPata2020} obtained decay
for abstract linear viscoelasticity without imposing
differential inequalities on the kernel.
We adopt the relaxation framework of Mustafa and
address the additional estimates required by the
nonlinear damping and logarithmic source.

The damping range also plays an important role in energy
decay. When $m_+\le2^*=2n/(n-2)$, Sobolev embedding
provides the spatial integrability needed to estimate the
displacement and the memory term in the damping space
$L^{m(x)}(\Omega)$. When $m_+>2^*$, finite energy alone
does not guarantee this integrability. Thus, supercritical
damping and supercritical sources present distinct
limitations of the Sobolev embedding.
Li and Guo \cite{LiGuo2024} obtained logarithmic
decay for a quasilinear wave equation with
variable-exponent diffusion and sources and
constant-exponent supercritical damping.
Li and Li \cite{LiLi2024} derived logarithmic decay
for a viscoelastic wave equation with variable
exponents under exponentially decaying kernels.
Their arguments use displacement estimates that
replace the unavailable spatial embedding. For a wave equation with infinite memory and
constant-exponent power nonlinearities, Liao
\cite{LiaoMemory2025} established weighted decay,
including a logarithmic example, under a differential
kernel condition and a divergence condition on a
combined time weight. For an equation without memory
or a source, Haraux and Tebou
\cite{HarauxTebou2025} obtained polynomial decay
for strong solutions in specified dimension ranges
and logarithmic decay for weak solutions with
additional displacement integrability.
Qin, Liao, and Zhang \cite{QinLiaoZhang2025}
and Yan, Guo, and Zhu \cite{YanGuoZhu2026}
used weighted displacement estimates to prove
logarithmic decay for Petrovsky and Timoshenko-type
equations with variable-exponent supercritical
damping. These results motivate our treatment
of damping beyond the Sobolev range.

Our first result establishes local energy solutions under
the strict exponent conditions
\[
(p_+-1)m_-'<2^*,
\quad
m_-'=\frac{m_-}{m_--1},
\quad
p_+<2^*.
\]
The condition $(p_+-1)m_-'<2^*$ ensures that the source
belongs to the dual damping space after its logarithmic
factor is bounded by a slightly higher power. The condition
$p_+<2^*$ allows us to choose this higher power within the
Sobolev range. We first truncate the source and solve the
resulting evolution problem using a maximal monotone
operator. Uniform energy estimates give a common existence
interval, and a difference energy estimate allows us to
pass to the original source. We then use temporal
regularization to prove the exact energy identity for every
energy solution. Finally, we establish a continuation
criterion that takes account of the full memory history
when a solution is restarted.

Under additional assumptions on the relative growth of the
source and damping exponents and on the total mass of the
memory kernel, we prove finite-time blow-up for nontrivial
data with nonpositive shifted energy. If the initial
shifted energy is zero, the energy identity shows that it
becomes strictly negative at every positive time. We can
then apply the negative-energy argument starting at any
such time while retaining the memory accumulated since
time zero. We also establish a blow-up criterion based on
a sufficiently large initial displacement--velocity
pairing, which applies regardless of the sign of the
initial shifted energy. This criterion allows us to
construct smooth blow-up data at every prescribed level
of the original energy. For global existence, we identify
an explicit stable region bounded by a potential barrier.
Its invariance yields coercive estimates for the velocity,
the displacement gradient, and the memory energy. These
bounds keep the solution within the continuation criterion
and imply global existence.

For stable solutions with $m_+\le2^*$, the kernel condition
used by Mustafa~\cite{Mustafa2018} yields general
inverse-function decay estimates. Exponential and
polynomial decay follow for particular choices of the
relaxation function. When $m_+>2^*$, the energy bound alone
does not give the displacement integrability needed to
estimate the damping pairing. Assuming additionally that
$u_0\in L^{m(x)}(\Omega)$, we obtain logarithmic decay
through a time-weighted estimate for the displacement.
In the memory term, we split the history according to its
age. On bounded age intervals, the kernel condition
controls the memory contribution by the dissipation in the
energy identity. For the remaining tail, the integrability
of the kernel makes its mass small when the splitting
point is chosen sufficiently large. Monotonicity of the
time weight then allows this tail to be absorbed after
integration in time. The resulting weighted energy
integrability yields the logarithmic decay estimate.

Section~\ref{sec:results} presents the assumptions
and main results. Section~\ref{sec:localproof}
proves local existence, the energy identity,
and continuation.
Sections~\ref{sec:blowproof} and~\ref{sec:globalproof}
establish the blow-up criteria and global existence,
respectively.
Sections~\ref{sec:decayproof} and~\ref{sec:superproof}
treat decay within and beyond the damping Sobolev range.

\section{Assumptions and main results}\label{sec:results}
Throughout the paper, $C>0$ denotes a generic constant that may
change from line to line, whereas $C_1,C_2,\ldots$ denote
nonnegative constants that remain fixed within the corresponding
argument. Dependence on parameters is indicated when relevant.

We set \(V=H_0^1(\Omega)\) and \(H=L^2(\Omega)\). The symbol \(\|\cdot\|_r\) refers to the standard norm of \(L^r(\Omega)\), and we equip $V$ with the norm \(\|u\|_V=\|\nabla u\|_2\). By Poincar\'e's inequality, there exists a constant \(C_p>0\) such that
\[\|u\|_2\le C_p\|\nabla u\|_2,\quad \forall u\in V.\]
We write $(\cdot,\cdot)$ for the $H$ inner product. For a measurable exponent $r$ with 
\[1<r_-:=\essinf_{x\in\Omega}r(x)\le r(x)\le r_+:=\esssup_{x\in\Omega}r(x)<\infty,\] 
define
\[\rho_r(u)=\int_\Omega |u|^{r(x)}\,dx\in[0,\infty]\]
and
\[L^{r(x)}(\Omega)=\{u\text{ is measurable on }\Omega:\rho_r(u)<\infty\}\]
equipped with the Luxemburg norm
\[\|u\|_{r(x)}=\mathrm{inf}\Big\{\lambda>0:\rho_r\Big(\frac u\lambda\Big)\leq1\Big\}.\] 
When this notation is adopted for \(Q_T=\Omega\times(0,T)\), the exponent is understood to depend solely on $x$.  We write $r'(x)=r(x)/(r(x)-1)$ and $r_-'=r_-/(r_--1)$.
For a real number $a$, let $[a]_+=\max\{a,0\}$. 

We make use of several consequences following from the definition of the Luxemburg norm.
For further details, we refer to \cite{DHHR}. For $1<r_-\le r(x)\le r_+<\infty$,
\begin{equation}
\label{eq2}
\min\{\|u\|_{r(x)}^{r_-},\|u\|_{r(x)}^{r_+}\}
\le\int_\Omega|u|^{r(x)}dx
\le\max\{\|u\|_{r(x)}^{r_-},\|u\|_{r(x)}^{r_+}\}.
\end{equation}
The variable-exponent H\"older's inequality is
\begin{equation}
\label{eq3}
\int_\Omega|vw|dx\le2\|v\|_{r(x)}\|w\|_{r'(x)}.
\end{equation}
Both statements also hold on $Q_T$. In particular, convergence of the modular
to zero is equivalent to convergence of the norm to zero. The 
Young's inequality, with an arbitrary fixed $\varepsilon>0$, gives
\begin{equation}
\label{eq4}
|vw|\le\varepsilon |v|^{r(x)}+C_\varepsilon |w|^{r'(x)},
\end{equation}
where $C_\varepsilon$ can be chosen uniformly for $r(x)\in[r_-,r_+]$.

The following assumptions will be imposed throughout the paper: 
\begin{assumption}\label{ass1}
\end{assumption}
\begin{enumerate}
\item The exponents are measurable and satisfy
\[2<p_-\le p(x)\le p_+<2^*,\quad 2\le m_-\le m(x)\le m_+<\infty,\quad (p_+-1)\frac{m_-}{m_--1}<2^*.\]
  \item The memory kernel function $g$ satisfies
  \[g\in W^{1,1}_{\mathrm{loc}}([0,\infty))\cap L^1(0,\infty),\quad g>0,\quad g'\le0~\text{a.e.},
\quad \ell=1-\int_0^\infty g(s)\,ds>0.\]
\end{enumerate}

Define
\[
X_m=V\cap L^{m(x)}(\Omega),
\]
equipped with the norm
\[
\|u\|_{X_m}=\|\nabla u\|_2+\|u\|_{m(x)}.
\]
The space $X_m$ is reflexive.


Define
\[f(x,u)=|u|^{p(x)-2}u\log|u|,	\quad b(x,u)=|u|^{m(x)-2}u,\]
\[F(x,u)=\frac{|u|^{p(x)}}{p(x)}\log|u|-\frac{|u|^{p(x)}}{p(x)^2},\quad F(x,0)=0.\]
The associated energy functional reads
\begin{equation}\label{eq5}
\begin{aligned}
E(t)={}&\frac12\|u_t(t)\|_2^2
+\frac12\left(1-\int_0^t g(s)\,ds\right)\|\nabla u(t)\|_2^2\\
&+\frac12\int_0^t g(t-s)\|\nabla u(t)-\nabla u(s)\|_2^2\,ds
-\int_\Omega F(x,u(t))\,dx.
\end{aligned}
\end{equation}

\begin{definition}[Energy solution]
\label{def:energy}
Let $T>0$ and $(u_0,u_1)\in V\times H$.
A function $u$ is called an energy solution on $[0,T]$ if
\[
u\in C([0,T];V),
\quad
u_t\in C([0,T];H)\cap L^{m(x)}(Q_T),
\]
where $u_t$ denotes the distributional time derivative
of $u$, and $u(0)=u_0\in V$, $u_t(0)=u_1\in H.$
Moreover, for every $z\in X_m$, 
\begin{equation}
\label{eq6}
\begin{split}
&\frac{d}{dt}(u_t(t),z)
+(\nabla u(t),\nabla z)
-\int_0^t g(t-s)(\nabla u(s),\nabla z)\,ds\\
&\qquad+\int_\Omega b(x,u_t(t))z\,dx
=\int_\Omega f(x,u(t))z\,dx
\end{split}
\end{equation}
in $\mathcal D'(0,T)$.
\end{definition}

\begin{theorem}[Local existence and continuation]\label{thm:local}
Suppose that Assumption \ref{ass1} holds.
Given any initial pair $(u_0,u_1)\in V\times H$, there exists some $T>0$ together with an energy solution $u$ defined on $[0,T]$ for problem \eqref{eq1} satisfying the initial conditions $(u(0),u_t(0))=(u_0,u_1)$.
$T$ may be taken uniformly positive for initial data lying within bounded subsets of $V\times H$.

Each local energy solution possesses a maximal extension onto an interval $[0,T_{\max})$, with $T_{\max}\in(0,\infty]$.
For arbitrary $0<T<T_{\max}$, we have
$
f(\cdot,u)\in L^{m'(x)}(Q_T),
$
and the energy identity
\begin{equation}\label{eq7}
\begin{aligned}
E(t)
&+\int_s^t\int_\Omega
|u_\tau(x,\tau)|^{m(x)}\,dx\,d\tau+\frac12\int_s^t
g(\tau)\|\nabla u(\tau)\|_2^2\,d\tau\\
&+\frac12\int_s^t\int_0^\tau
[-g'(\tau-r)]
\|\nabla u(\tau)-\nabla u(r)\|_2^2\,dr\,d\tau
=E(s)
\end{aligned}
\end{equation}
is valid for all $0\le s\le t\le T$.

Whenever $T_{\max}<\infty$, the blow-up alternative
\begin{equation}\label{eq8}
\limsup_{t\uparrow T_{\max}}
\bigl(\|u(t)\|_V+\|u_t(t)\|_2\bigr)=\infty
\end{equation}
holds true.
\end{theorem}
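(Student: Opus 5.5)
The construction follows the Guo--Rammaha--Sakuntasathien--Titi--Toundykov framework \cite{GRSTT}, adapted to finite memory starting at $t=0$ and to variable exponents. I first replace $f$ by a truncation $f_K(x,u)=f(x,\eta_K(u))$, where $\eta_K$ is a smooth cutoff equal to the identity on $|u|\le K$. Then $f_K$ is globally Lipschitz from $V$ into $H$ (indeed into $L^{m'(x)}$). Next I write the truncated problem as a first-order system on the phase space $\HH=V\times H$, augmented by the memory variable. Since the convolution starts at time zero, a convenient choice is to set $u(s)=0$ for $s<0$ in the history variable $\eta^t(s)=u(t)-u(t-s)$. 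The operator $\Op(u,v,\eta)=(-v,\,-\Delta u+\int_0^\infty g(s)\Delta\eta(s)\,ds+b(x,v),\,\eta_s-v)$ is maximal monotone in the weighted space $\HH\times L^2_{g'}(\mathbb R_+;V)$, where monotonicity of $b$ together with the Minty trick handles the damping. The Lipschitz perturbation $f_K$ then gives a unique global solution $u^K$ with an energy identity (Kato/Brezis theory).

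The core step is a uniform bound. Take $K$ large and test with $u^K_t$. The bound $|F(x,u)|\le C(|u|^{p_+ +\delta}+1)$ with $p_++\delta<2^*$ gives
\[
\mathcal Y_K(t)\le \mathcal Y_K(0)+C\int_0^t \bigl(1+\mathcal Y_K(s)\bigr)^{(p_++\delta)/2}ds ,
\]
where $\mathcal Y_K$ is the quadratic energy, i.e. $E$ without $-\int F$, plus the dissipation. A nonlinear Gronwall argument then yields $T_0>0$, depending only on $\|(u_0,u_1)\|_{V\times H}$, on which the quadratic energy and $\|u^K_t\|_{L^{m(x)}(Q_{T_0})}$ are bounded uniformly in $K$. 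This is also where the uniformity of $T$ on bounded sets comes from.

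To pass $K\to\infty$ I estimate the difference $w=u^K-u^J$. Testing with $w_t$ and using monotonicity of $b$ leaves the term $\int_0^t\int_\Omega (f_K(u^K)-f_J(u^J))w_t$. By the mean value theorem, $|f'(x,u)|\le C(1+|u|^{p_+-2+\delta})$. Combined with Hölder, the bound $(p_+-1)m_-'<2^*$ and the uniform bound on $u_t$ in $L^{m(x)}$, this term is controlled by $\varepsilon\rho_m(w_t)+C\|\nabla w\|_2^{\theta}$ plus a truncation tail that vanishes as $K,J\to\infty$. I expect this to be the \textbf{main obstacle}. The source is not Lipschitz $V\to H$, so the damping space must absorb $w_t$. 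The two monotone damping terms $b(u^K_t)-b(u^J_t)$ give control of $\int |w_t|^{m}$ only when $m_-\ge2$, via $(b(a)-b(c))(a-c)\ge c|a-c|^{m}$. That is exactly what makes the Young split work. Gronwall then gives Cauchy sequences in $C([0,T_0];V)\times C([0,T_0];H)$ and in $L^{m(x)}(Q_{T_0})$. Finally, $f_K(u^K)\to f(u)$ in $L^{m'(x)}(Q_{T_0})$, using $(p_+-1)m_-'<2^*$ again, and $b(u^K_t)\rightharpoonup b(u_t)$ is identified by monotonicity. Hence $u$ satisfies \eqref{eq6} for all $z\in X_m$.

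For the energy identity \eqref{eq7} valid for every energy solution, the velocity $u_t$ is not an admissible test function in time. I mollify in time, testing \eqref{eq6} with $(u_t)_\epsilon=\rho_\epsilon*\rho_\epsilon*u_t$ on $[s,t]$. All pairings make sense because $u_t\in L^{m(x)}(Q_T)$ and $b(u_t),f(u)\in L^{m'(x)}(Q_T)$, with $f(u)$ bounded by the same Sobolev argument using only $u\in C([0,T];V)$. Letting $\epsilon\to0$ gives exact identities for the kinetic and source terms. The memory term is handled by the standard identity
\[
\int_0^t g(t-s)(\nabla u(s),\nabla u_t(t))\,ds=-\tfrac12\tfrac d{dt}\Bigl[\int_0^t g(t-s)\|\nabla u(t)-\nabla u(s)\|_2^2ds-\int_0^tg\,\|\nabla u(t)\|^2\Bigr]+\tfrac12\int_0^t g'(t-s)\|\nabla u(t)-\nabla u(s)\|_2^2ds-\tfrac12 g(t)\|\nabla u(t)\|_2^2,
\]
justified at the regularized level. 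Next, local uniqueness follows from the same difference estimate, now without truncation tails, which makes the maximal extension well defined via Zorn or a sup over extensions. For continuation, I restart at a time $t_1$ treating $\int_0^{t_1}g(t-s)\Delta u(s)\,ds$ as a known forcing whose norm depends only on $\sup_{[0,t_1]}\|u\|_V$. The local time $T_0$ then depends only on $\|u(t_1)\|_V+\|u_t(t_1)\|_2$ and on that history bound. If \eqref{eq8} failed, both would stay bounded, so $T_0$ would be uniform and the solution could be extended beyond $T_{\max}$, a contradiction.
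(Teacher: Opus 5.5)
Your removal of the truncation has a genuine gap. The Gronwall argument on $w=u^K-u^J$ does not close once $m_+>2$.

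\begin{itemize}
\item \textbf{Why the estimate is sublinear.} In the supercritical range, the source difference $a=f_K(x,u^K)-f_J(x,u^J)$ is controlled, up to the tail, by $\|\nabla w\|_2$ only in $L^{m_-'}(\Omega)$, not in $L^2(\Omega)$. An $L^2$ bound would need $f$ locally Lipschitz from $V$ into $H$, which Assumption~\ref{ass1} does not give. After your split $|a|\,|w_t|\le\varepsilon|w_t|^{m(x)}+C_\varepsilon|a|^{m'(x)}$, you only get $\int_\Omega|a|^{m'(x)}dx\le C(\|\nabla w\|_2^{m_+'}+\|\nabla w\|_2^{m_-'})$, where $m_+'=m_+/(m_+-1)<2$.
\item \textbf{Why Gronwall fails.} For small $w$ this reads $y(t)\le\epsilon_{K,J}+C\int_0^t y^{\theta/2}\,ds$ with $\theta=m_+'$ and $\theta/2<1$. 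This inequality is of non-Osgood type: $y(t)=(ct)^{2/(2-\theta)}$ satisfies it with $\epsilon_{K,J}=0$. A vanishing tail therefore bounds $y$ but does not make it small.
\item \textbf{The alternative pairing fails too.} Using the uniform bound of $w_t$ in $L^{m(x)}(Q_{T_0})$ instead bounds $y$ by $\epsilon$ plus a power at most $1/2$ of $\sup_{[0,t]}y$. That is no better: the damping has degree $m>2$ and cannot control small differences quadratically.
\end{itemize}

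The missing idea is compactness, used before any difference estimate.

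\begin{itemize}
\item The bounds \eqref{eq34}, the estimate $\|u_n(t)-u_n(s)\|_2\le C|t-s|$, interpolation with $L^{2^*}$, and $V\hookrightarrow\hookrightarrow L^{q_\sigma}(\Omega)$ give, by Arzel\`a--Ascoli, a subsequence with $u_n\to u$ in $C([0,T_0];L^{q_\sigma}(\Omega))$.
\item Lemma~\ref{lem:cutoff}(2) and dominated convergence then give $f_n(x,u_n)\to f(x,u)$ strongly in $L^{m'(x)}(Q_{T_0})$.
\item Only then does the paper use the difference identity \eqref{eq32}, and it needs no Gronwall. Its source term is at most $2\|f_n(x,u_n)-f_k(x,u_k)\|_{L^{m'(x)}(Q_{T_0})}\|v_n-v_k\|_{L^{m(x)}(Q_{T_0})}$, a vanishing factor times a uniformly bounded one.
\end{itemize}

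For the same reason, your claim that local uniqueness follows from the difference estimate is unfounded. With $\epsilon=0$ the sublinear inequality does not force $y\equiv0$, and the paper neither proves nor uses uniqueness. Your continuation step depends on it. Restarting at $t_1<T_{\max}$ gives a solution on $[t_1,t_1+T_0]$ that need not agree with the given maximal solution on $[t_1,T_{\max})$. It is then not an extension, so it contradicts nothing; Zorn's lemma gives maximal extensions, not a unique maximal solution.

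The paper instead restarts at $T$ itself. It treats $f(x,u)\in L^{m'(x)}(Q_T)$ as a prescribed forcing, which makes the problem monotone. The solution $W$ of that problem is the $C([0,T];\HH)$-limit of solutions with smooth forcings. The difference identity with identical forcings gives $U=W$ on $[0,T)$. Hence $U(t)\to U_T=W(T)$ in $\HH$, and the local construction restarts from $U_T$, including $\eta^T\in\mathcal M$.

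Two smaller slips also need fixing.
\begin{itemize}
\item With $\eta^t(s)=u(t)-u(t-s)$, the second component of $\Op$ must be $-\ell\Delta u-\int_0^\infty g(s)\Delta\eta(s)\,ds$, and the history weight is $g$, not $g'$. As written, your operator neither reproduces \eqref{eq1} nor is monotone.
\item The uniform a priori bound comes from absorbing $\int_\Omega f(x,u)u_t\,dx$ into the damping, which gives the exponent $q_\sigma/2$. The bound on $F$ alone does not yield an integral inequality with exponent $(p_++\delta)/2$.
\end{itemize}

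Your time-mollified proof of \eqref{eq7} is fine and parallels the paper's symmetric difference quotients.
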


We now investigate conditions under which energy solutions blow up in
finite time. 
To investigate the blow-up criterion, we need to impose an additional assumption.
 \begin{assumption}\label{ass2}
 \[m_+<p_-,\quad \ell>\frac{1}{(p_--1)^2}.
\]
 \end{assumption}
Following our previous work \cite{LZ}, we use a
source--potential inequality with an additive
correction, which is absorbed into a constant
shift of the energy. For $q\ge p_-$, define
\[
c_{p_-}(q)=
\begin{cases}
\displaystyle\frac{q-p_-}{q^2}\exp\!\left(-\frac{q}{q-p_-}\right),&q>p_-,\\[2mm]
0,&q=p_-.
\end{cases}
\]
Lemma~\ref{correctionlemma} states its optimal pointwise property.
To absorb the additive term in the source--potential estimate, we use $C_*:=\int_\Omega c_{p_-}(p(x))\,dx$
and define the \emph{shifted energy}
\begin{equation*}
\E(t)=E(t)+\frac{C_*}{p_-}.
\end{equation*}
The function $c_{p_-}$ is continuous on $[p_-,p_+]$, so
\begin{equation*}
0\le C_*=\int_\Omega c_{p_-}(p(x))\,dx\le |\Omega|\max_{q\in[p_-,p_+]}c_{p_-}(q)<\infty.
\end{equation*}
Since the shift $C_*/p_-$ is independent of time, $\E'=E'$ wherever the derivatives exist.

Throughout the following three results, assume that
Assumption~\ref{ass1} and \ref{ass2} hold. 
Fix
\begin{equation}\label{eq9}
0<\alpha<
\min\left\{
\frac{p_--2}{2p_-},
\frac{p_--m_+}{p_-(m_+-1)}
\right\},
\quad
r=\frac1{1-\alpha}>1.
\end{equation}
For each initial pair $(u_0,u_1)\in V\times H$,
let $u$ be any maximal energy solution supplied by
Theorem~\ref{thm:local}, with maximal existence time
$T_{\max}$. 

\begin{theorem}[Blow-up for nonpositive shifted energy]
\label{thm:negative}
Let Assumption~\ref{ass1} and \ref{ass2} hold. Suppose that
\[
\mathcal E(0)\le0,
\quad
(u_0,u_1)\ne(0,0).
\]
Then $T_{\max}<\infty$ and
\[
\limsup_{t\uparrow T_{\max}}
\bigl(\|u(t)\|_V+\|u_t(t)\|_2\bigr)=\infty.
\]
More precisely, the following lifespan estimates hold.
\begin{enumerate}[label=\textup{(\roman*)}]
\item
If $\mathcal E(0)<0$, set $h_0=-\mathcal E(0)$.
There exist positive constants $\varepsilon,c,A$,
chosen as in Section~\ref{sec:negative} and possibly
depending on the initial data, such that
\[
Y_0=h_0^{1-\alpha}+\varepsilon(u_0,u_1)>0
\]
and
\[
T_{\max}
\le\frac{A}{\varepsilon c(r-1)}Y_0^{1-r}.
\]

\item
If $\mathcal E(0)=0$, then
\[
\mathcal E(\tau)<0
\quad\text{for every }0<\tau<T_{\max}.
\]
For each such $\tau$, set $h_\tau=-\mathcal E(\tau)$.
There exist positive constants
$\varepsilon_\tau,c_\tau,A_\tau$, chosen as in
Section~\ref{sec:negative}, such that
\[
Y_\tau=h_\tau^{1-\alpha}
+\varepsilon_\tau(u(\tau),u_t(\tau))>0
\]
and
\[
T_{\max}
\le\tau+
\frac{A_\tau}{\varepsilon_\tau c_\tau(r-1)}
Y_\tau^{1-r}.
\]
\end{enumerate}
\end{theorem}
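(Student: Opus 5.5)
We follow the classical Georgiev--Todorova/Messaoudi concavity-type argument, adapted to the shifted energy and the memory term. Set $H(t)=-\E(t)$. By the energy identity \eqref{eq7}, $H$ is nondecreasing, and
\[
H'(t)=\rho_m(u_t)+\tfrac12 g(t)\|\nabla u\|_2^2-\tfrac12(g'\circ\nabla u)(t)\ge\rho_m(u_t)\ge0
\]
a.e. Here we write $(g\circ v)(t)=\int_0^t g(t-s)\|v(t)-v(s)\|_2^2ds$. The first ingredient is the pointwise source--potential inequality of Lemma~\ref{correctionlemma}. We will use it in the form $f(x,u)u\ge p_-F(x,u)-c_{p_-}(p(x))$ pointwise, or an equivalent variant. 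Integrating gives
\[
\int_\Omega f(x,u)u\,dx\ge p_-\int_\Omega F\,dx-C_*.
\]
This yields $\int_\Omega F(x,u)\,dx\ge H(t)+\tfrac12\|u_t\|_2^2+\tfrac{1}{2}(1-\int_0^tg)\|\nabla u\|^2_2+\tfrac12(g\circ\nabla u)-\frac{C_*}{p_-}\cdot 0$. We also need an upper bound of the form $\rho$-type estimates such as
\[
\int_\Omega|u|^{p(x)}\log|u|\,dx\le C\bigl(\|\nabla u\|_2^{\,p_-+\delta}+\|\nabla u\|_2^{\,p_+ +\delta}\bigr)+C,
\]
with $p_++\delta<2^*$. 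We also need the interpolation bound $\rho_m(u)\le C(\rho_p$-type quantity$)^{m_+/p_-}$ that underlies the Messaoudi trick.

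For case (i), set $L(t)=H(t)^{1-\alpha}+\varepsilon(u,u_t)$. From the weak formulation \eqref{eq6} with $z=u$, which is admissible after a density/regularization argument since $f(\cdot,u)\in L^{m'(x)}$ and $u_t\in L^{m(x)}$, we get
\[
\frac{d}{dt}(u,u_t)=\|u_t\|_2^2-\|\nabla u\|_2^2+\int_0^tg(t-s)(\nabla u(s),\nabla u(t))ds-\int_\Omega b(x,u_t)u+\int_\Omega f(x,u)u.
\]
We then add and subtract $p_-(1-\eta)\E(t)$, with the shift absorbed via $C_*$. The memory cross term is handled by Young's inequality as $\int g(t-s)(\nabla u(s),\nabla u)\ge(1-\frac1{4\theta})\int_0^tg\|\nabla u\|^2-\theta(g\circ\nabla u)$. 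The coefficient of $\|\nabla u\|_2^2$ is then of size $\tfrac{p_-}{2}\ell-1$ up to small losses, and the coefficient of $g\circ\nabla u$ is of size $\tfrac{p_-}{2}-\theta$. Assumption~\ref{ass2}, $\ell>1/(p_--1)^2$, is exactly what allows $\theta$ to be chosen so that both coefficients are positive. This is the delicate parameter choice.

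The damping term is controlled by Young's inequality \eqref{eq4} with a weight: $|\int b(x,u_t)u|\le \delta^{m(x)}$-weighted $\rho_m(u)+C_\delta\rho_m(u_t)$. We choose $\delta$ proportional to $H(t)^{-\alpha}$ times a constant, so that $C_\delta\rho_m(u_t)\le MH^{-\alpha}H'$. On the other side, $H^{\alpha(m_+-1)}\rho_m(u)\le C(H+\int F+\ldots)$ uses $\rho_m(u)\le C\|u\|_{p_-}^{m}$, $H\le C\int_\Omega |u|^{p(x)}\log|u|+C$, and the restriction $\alpha<\frac{p_--m_+}{p_-(m_+-1)}$. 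The restriction $\alpha<\frac{p_--2}{2p_-}$ is used in the analogous bound $|(u,u_t)|^{r}\le C(\|u_t\|^2+\|u\|_{p}^{2/(1-2\alpha)})$. With $1-\alpha-M\varepsilon>0$ and $\varepsilon$ small, this yields
\[
L'(t)\ge\varepsilon c\bigl(H+\|u_t\|_2^2+\|\nabla u\|_2^2+g\circ\nabla u+\textstyle\int|u|^{p}\log|u|\bigr)\quad\text{and}\quad L(0)=Y_0>0
\]
for small $\varepsilon$, which needs $h_0>0$. Combining with $L^r\le A(\text{same bracket})$, we obtain $L'\ge \frac{\varepsilon c}{A}L^r$. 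Integrating this ODE inequality gives the stated bound on $T_{\max}$, and \eqref{eq8} from Theorem~\ref{thm:local} gives the norm blow-up.

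For case (ii), with $\E(0)=0$, we argue that $\E(\tau)<0$ for all $\tau>0$. Otherwise, by monotonicity, $\E\equiv0$ on $[0,\tau]$. Then \eqref{eq7} forces $\rho_m(u_t)=0$ and $g(t)\|\nabla u\|^2=0$ on $[0,\tau]$. Since $g>0$, this gives $u\equiv0$ and $u_t\equiv 0$, contradicting $(u_0,u_1)\neq(0,0)$ by continuity. We then rerun case (i) on $[\tau,T_{\max})$ with the same functional. The memory terms still integrate from $0$; this is harmless since all estimates above are at a fixed time $t$ and use only $\int_0^tg\le1-\ell$. This gives the bound with $\tau$ added. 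The main obstacle is the coercivity step, which requires choosing $\eta$ and $\theta$ so that the $\|\nabla u\|^2$ and $g\circ\nabla u$ coefficients stay positive under Assumption~\ref{ass2} while the logarithmic correction is exactly absorbed by $C_*$.
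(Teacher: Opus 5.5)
Your proposal is correct and follows essentially the same route as the paper: $H=-\E$ with $H'\ge\rho_m(u_t)$, the functional $H^{1-\alpha}+\varepsilon(u,u_t)$, and splitting the source with weight $\hat q=(1-\eta)p_-$ against \eqref{eq43}. It also uses the same memory Young parameter taken close to $\hat q/2$ (possible precisely because $\ell>1/(\hat q-1)^2$), the $H^{-\alpha}$-weighted damping estimate closed by $\alpha(m_+-1)+m_+/p_-<1$, and a restart at $\tau>0$ when $\E(0)=0$.

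A few minor points should be fixed in a write-up:
\begin{itemize}
\item Keep the $\|\nabla u\|_2^2$ coefficient as $\frac{\hat q-2}{2}\ell-\frac{1-\ell}{4\theta}$. Your cruder $\frac{p_-}{2}\ell-1$ would need roughly $\ell>2/p_-$, which is stronger than Assumption~\ref{ass2}.
\item It is the coefficient of $\rho_m(u_t)$, not $\delta$, that should be proportional to $H^{-\alpha}$.
\item You need $H\ge h_0$ to pass from $H^{\alpha(m(x)-1)}$ to a constant times $H^{\alpha(m_+-1)}$.
\item Testing with $u$ is justified by $m_+<p_-<2^*$, which gives $u\in L^{m(x)}(Q_T)$.
\item The upper bound of $\int_\Omega|u|^{p(x)}\log|u|\,dx$ in terms of $\|\nabla u\|_2$ is never needed.
\end{itemize}
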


The next criterion imposes no sign restriction on
the shifted initial energy. Instead, it requires a sufficiently
large initial displacement--velocity pairing.

\begin{theorem}[An initial-pairing blow-up criterion]
\label{thm:high}
Let Assumption~\ref{ass1} and \ref{ass2} hold. There exist positive constants $\varepsilon,c,A,B$,  chosen as in Section~\ref{sec:high}
depending only on the structural parameters and the
fixed choice of $\alpha$ in \eqref{eq9}, such that
the following statement holds.

If $(u_0,u_1)\in V\times H$ satisfies
\begin{equation}\label{eq10}
Y_0:=1+\varepsilon(u_0,u_1)>
\left[
\frac{2AB}{c}
\bigl(1+[\mathcal E(0)]_+\bigr)
\right]^{1/r},
\end{equation}
then every corresponding maximal energy solution
satisfies
\begin{equation}\label{eq11}
T_{\max}
\le\frac{2A}{\varepsilon c(r-1)}
Y_0^{1-r}
\end{equation}
and the norm blow-up conclusion \eqref{eq8}.
\end{theorem}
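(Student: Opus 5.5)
We argue by contradiction: assume $T_{\max}=\infty$, or more generally work on $[0,T)$ with $T<T_{\max}$, and show that a Lyapunov-type functional blows up before the time on the right of \eqref{eq11}. Instead of the classical $H(t)=-\mathcal E(t)$, which need not be positive here, we use the shifted quantity
\[
H(t)=1+[\mathcal E(0)]_+-\mathcal E(t)\ \ge 1,
\]
which is nondecreasing by the energy identity \eqref{eq7}. Its derivative is $H'(t)=\rho_m(u_t)+\tfrac12 g(t)\|\nabla u\|_2^2+\tfrac12(-g'\circ\nabla u)(t)\ge\rho_m(u_t)$. We then set
\[
Y(t)=H(t)^{1-\alpha}+\varepsilon(u(t),u_t(t)),
\]
so that $Y(0)\ge 1+\varepsilon(u_0,u_1)$ up to the factor $(1+[\mathcal E(0)]_+)^{1-\alpha}$. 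The threshold \eqref{eq10} is designed to absorb exactly this factor together with the additive constants.

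The first step is to differentiate $(u,u_t)$ using \eqref{eq6} with $z=u$. This is justified by the regularity in Theorem~\ref{thm:local}, including $f(\cdot,u)\in L^{m'(x)}$ and $u\in L^{m(x)}$ via $m_+<p_-<2^*$, with temporal regularization if needed. We obtain
\[
\frac{d}{dt}(u,u_t)=\|u_t\|_2^2-\|\nabla u\|_2^2+\int_0^t g(t-s)(\nabla u(s),\nabla u(t))\,ds-\int_\Omega b(x,u_t)u\,dx+\int_\Omega |u|^{p(x)}\log|u|\,dx .
\]
The second step treats the source using Lemma~\ref{correctionlemma}: pointwise $|u|^{p}\log|u|\ge p_-F(x,u)+|u|^{p}/p-c_{p_-}(p(x))$ or similar. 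Integrated, and combined with $-p_-E=p_-(H-1-[\mathcal E(0)]_+)-C_*$, this gives
\[
\int|u|^p\log|u|\ge p_-H+\tfrac{p_-}{2}\|u_t\|_2^2+\Big(\tfrac{p_-}{2}(1-\textstyle\int_0^tg)-1-\delta\Big)\|\nabla u\|_2^2+\dots-p_-(1+[\mathcal E(0)]_+).
\]
The memory cross term is split by Young's inequality with parameter, as in the standard argument. The assumption $\ell>1/(p_--1)^2$ is then used to guarantee a positive coefficient $a_0$ on $\|\nabla u\|_2^2$ and on $g\circ\nabla u$. We also keep a fraction of $\int_\Omega |u|^{p}\log|u|$, or $\rho_p(u)$ minus a constant, because it is needed below.

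The third step, the damping pairing, is the main obstacle. By \eqref{eq4}, $|\int b(x,u_t)u|\le \delta^{-1}\ldots\rho_m(u_t)+\delta\rho_m(u)$. We need $\rho_m(u)\le C\rho_p(u)^{m/p}$-type control and a bound relating $\rho_m(u_t)$ to $H'H^{-\alpha}$. As in the classical approach, we choose the Young parameter as $\delta=kH^{-\alpha}$. This yields a term $kH^{-\alpha}H'$ and a term $k^{1-m_+}H^{\alpha(m_+-1)}\rho_m(u)$. Using $\rho_m(u)\le C(\rho_p(u)+1)$, the interpolation $\rho_m(u)\le C\rho_p(u)^{m_+/p_-}$ on the region $|u|\ge1$, and $H\le C(\rho_p(u)+\ldots)$ from the energy, the condition $\alpha<\frac{p_--m_+}{p_-(m_+-1)}$ gives $H^{\alpha(m_+-1)}\rho_p^{m_+/p_-}\le C(H+\rho_p+1)$. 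The constants must be kept additive with coefficient $B(1+[\mathcal E(0)]_+)$, never multiplicative. After choosing $k$ large and $\varepsilon$ small depending only on the structural parameters, we expect
\[
Y'(t)\ge c\big(H+\|u_t\|_2^2+\|\nabla u\|_2^2+\rho_p(u)\big)-cB(1+[\mathcal E(0)]_+).
\]

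The fourth step closes the argument. We bound $Y^r\le A(H+\|u_t\|_2^2+\|\nabla u\|_2^2+\rho_p(u))$, using $|(u,u_t)|^{r}\le C(\|u\|_2^{2r'}\ldots)$ with $r=1/(1-\alpha)$ and $\alpha<\frac{p_--2}{2p_-}$; this is the standard estimate $\|u\|_2^{2/(1-2\alpha)}\le C(\|\nabla u\|_2^2+\rho_p)$-type. This gives $Y'\ge \frac{c}{A}Y^r-cB(1+[\mathcal E(0)]_+)$, with the factor $\varepsilon$ tracked so that it matches \eqref{eq11}. Condition \eqref{eq10} says that $\frac cA Y_0^r>2cB(1+[\mathcal E(0)]_+)$. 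A continuity argument then shows that $Y$ is increasing and $Y'\ge \frac{c}{2A}Y^r$ for all $t$. Integrating gives $Y^{1-r}(t)\le Y_0^{1-r}-\frac{\varepsilon c(r-1)}{2A}t$, so $T_{\max}$ is bounded by the right-hand side of \eqref{eq11}. Finally, since $Y$ is controlled by $H$, $\|u\|_V$ and $\|u_t\|_2$, and $H$ itself is controlled by these norms, the continuation criterion \eqref{eq8} of Theorem~\ref{thm:local} yields norm blow-up.
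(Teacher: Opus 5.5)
Your proposal is correct and follows the paper's route: a shifted energy $H\ge1$ with $H'\ge\int_\Omega|u_t|^{m(x)}dx$, the pairing estimate with $\hat q<p_-$ so that part of the source is retained, Young's inequality with weight $H^{-\alpha}$ on the damping pairing, absorption of $H^{\alpha(m_+-1)}\int_\Omega|u|^{m(x)}dx$ using $\alpha(m_+-1)+m_+/p_-<1$, the bound $Y^r\le A[\cdots]$, and a first-crossing argument above the threshold \eqref{eq10}. One correction: the energy bounds $H$ only by $\int_\Omega|u|^{p(x)}\log|u|\,dx$, which is not controlled by $\rho_p(u)$, so your claim $H\le C(\rho_p(u)+\dots)$ is false as written; either keep the shifted logarithmic integral in the bracket as the paper does, or note that $H^{d}\rho_p^{\beta}\le\eta(H+\rho_p)+C_\eta$ already follows from Young's inequality because $d+\beta<1$, so no upper bound on $H$ is needed.
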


The initial-pairing criterion can be satisfied by
smooth data on every prescribed level of the original
energy $E(t)$.

\begin{theorem}[Blow-up data at every energy level]
\label{thm:every}
Let Assumption~\ref{ass1} and \ref{ass2} hold. For every $R_0\in\mathbb R$, there exist initial data
$
(u_0,u_1)\in C_c^\infty(\Omega)\times C_c^\infty(\Omega)
$
such that
\[
E(0)=R_0
\]
and \eqref{eq10} holds with the constants
from Theorem~\ref{thm:high}.
Consequently, every corresponding maximal energy
solution blows up in the sense of
\eqref{eq8}, and its maximal existence
time satisfies \eqref{eq11}.
\end{theorem}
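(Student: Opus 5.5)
At $t=0$ the memory terms in \eqref{eq5} vanish, so
\[
E(0)=\tfrac12\|u_1\|_2^2+\tfrac12\|\nabla u_0\|_2^2-\int_\Omega F(x,u_0)\,dx .
\]
The plan is to make the pairing $(u_0,u_1)$ arbitrarily large while keeping $E(0)$ fixed at $R_0$. The two parts of the energy are tuned separately: a large displacement drives the potential part very negative, and the velocity then restores the prescribed level. Concretely, fix a nonzero $\phi\in C_c^\infty(\Omega)$ and set $u_0=\lambda\phi$ for a large parameter $\lambda>0$. Set $u_1=\mu\phi$ with $\mu\ge0$ to be determined. Then
\[
(u_0,u_1)=\lambda\mu\|\phi\|_2^2,
\qquad
E(0)=\tfrac12\mu^2\|\phi\|_2^2+J(\lambda),
\]
where
\[
J(\lambda):=\tfrac{\lambda^2}{2}\|\nabla\phi\|_2^2-\int_\Omega F(x,\lambda\phi)\,dx .
\]

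\textbf{Step 1.} Show that $J(\lambda)\to-\infty$ as $\lambda\to\infty$. On the set where $|\phi|\ge\delta>0$ (of positive measure), $F(x,\lambda\phi)\ge c\,\lambda^{p_-}\log\lambda$ for large $\lambda$. On the complementary set, $F$ is bounded below by $-C\lambda^{p_+}|\phi|^{p(x)}$ only when $|\lambda\phi|\ge1$; where $|\lambda\phi|<1$, $|F|$ is bounded. This is the delicate point: the negative contribution of $F$ must not overwhelm the positive part.

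\textbf{Step 1, refinement.} To avoid this sign competition, take $\phi$ to be a smooth plateau function, equal to $1$ on a ball $K$ and with $0\le\phi\le1$. Then $\int_K F(x,\lambda)\,dx\ge c\lambda^{p_-}\log\lambda$. On $\{0<\phi<1\}$, $F(x,\lambda\phi)$ is bounded below by $\min_s F=-C$ wherever $\lambda\phi\le e^{1/p}$, and is positive elsewhere. Hence $\int F\ge c\lambda^{p_-}\log\lambda-C|\Omega|$, and since $p_->2$ this gives $J(\lambda)\to-\infty$.

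\textbf{Step 2.} Choose $\lambda_0$ so that $J(\lambda)<R_0$ for all $\lambda\ge\lambda_0$. For each such $\lambda$, put
\[
\mu=\mu(\lambda)=\sqrt{2\,(R_0-J(\lambda))}\;\|\phi\|_2^{-1}>0,
\]
so that $E(0)=R_0$ exactly. Both $u_0$ and $u_1$ lie in $C_c^\infty(\Omega)$.

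\textbf{Step 3.} Verify \eqref{eq10}. Since $C_*$ is fixed, $[\mathcal E(0)]_+=[R_0+C_*/p_-]_+$ does not depend on $\lambda$. The right side of \eqref{eq10} is therefore a constant $M$ determined by $R_0$ and the structural data. The left side is $1+\varepsilon\lambda\mu(\lambda)\|\phi\|_2^2$, and $\mu(\lambda)\to\infty$ because $J(\lambda)\to-\infty$. So $\lambda\mu(\lambda)\to\infty$, and for $\lambda$ large \eqref{eq10} holds.

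\textbf{Conclusion.} Theorem~\ref{thm:high} now gives \eqref{eq11} and \eqref{eq8} for every corresponding maximal energy solution, whose existence is guaranteed by Theorem~\ref{thm:local}.

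The main obstacle is Step 1: the sign-changing logarithmic potential requires the lower bound on $\int_\Omega F(x,\lambda\phi)\,dx$. The plateau choice of $\phi$, together with the uniform lower bound $F\ge -C$ (valid because $p\in[p_-,p_+]$), handles it.
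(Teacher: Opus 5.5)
Your proposal is correct and follows essentially the same route as the paper. Both arguments scale a plateau profile $\varphi$ by a large parameter and use the global bound $F(x,\cdot)\ge -1/p_-^2$ off the plateau, so that $\int_\Omega F(x,s\varphi)\,dx$ dominates $s^2$. Both then choose the velocity amplitude to fix $E(0)=R_0$, and observe that the threshold in \eqref{eq10} depends on the data only through the fixed quantity $[\mathcal E(0)]_+$. The only difference is cosmetic: the paper proves $a_s/s\to\infty$, whereas you use the weaker (and sufficient) facts $\mu(\lambda)\to\infty$ and $\lambda\to\infty$ to get $(u_0,u_1)\to\infty$.
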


The following results concern global solutions and their decay.  Choose $\sigma>0$ such
that $p_++\sigma<2^*$, and let $S_p$ satisfy
$\|u\|_{p_++\sigma}\le S_p\|\nabla u\|_2$. Define
\begin{equation}
\label{eq12}
 R=\left(\frac{e\sigma\ell}{2S_p^{p_++\sigma}}\right)^{1/(p_++\sigma-2)},
 \quad d_0=\frac{\ell(p_--1)}{2p_-}R^2.
\end{equation}


\begin{theorem}[Global existence]\label{thm:global}
Let Assumption \ref{ass1} hold and
\begin{equation}\label{eq13}
 \|\nabla u_0\|_2<R,\quad E(0)<d_0,
\end{equation}
then every maximal energy solution supplied by Theorem~\ref{thm:local} is global. Moreover,
\begin{equation}\label{eq14}
c_E\widehat{E}(t) \le E(t)\le C_E\widehat{E}(t).
\end{equation}
 Here 
\[\widehat{E}(t)=\|u_t(t)\|_2^2+\|\nabla u(t)\|_2^2
 +\int_0^t g(t-s)\|\nabla u(t)-\nabla u(s)\|_2^2\,ds.\]
\end{theorem}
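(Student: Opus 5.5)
The plan is a potential-well argument: an upper bound on $\int_\Omega F(x,u)\,dx$ in terms of $\|\nabla u\|_2$, a barrier function of one variable, invariance of the small-gradient region by continuity, and then Theorem~\ref{thm:local}.

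\textbf{Step 1: bound on the potential.} For $|u|\ge1$ we use $\log|u|\le \frac{|u|^\sigma}{e\sigma}$. For $|u|<1$ the term $\frac{|u|^{p(x)}}{p(x)}\log|u|$ is nonpositive. Since the term $-|u|^{p(x)}/p(x)^2$ is also nonpositive, we get
\[
F(x,u)\le \frac{|u|^{p(x)+\sigma}}{e\sigma p(x)}\mathbf 1_{\{|u|\ge1\}}\le \frac{|u|^{p_++\sigma}}{e\sigma p_-}.
\]
With the constant $S_p$ this yields
\[
\int_\Omega F(x,u)\,dx\le \frac{S_p^{p_++\sigma}}{e\sigma p_-}\|\nabla u\|_2^{p_++\sigma}.
\]
It will be cleaner to keep the sharper form
\[
\int_\Omega F\le \frac1{p_-}\int_\Omega |u|^{p(x)}\log|u|\,\mathbf 1_{\{|u|\ge 1\}}
\]
and to combine it with the identity
\[
p(x)F(x,u)=f(x,u)u-\frac{|u|^{p(x)}}{p(x)}.
\]

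\textbf{Step 2: lower bound on the energy.} Since $1-\int_0^tg\ge\ell$, we have
\[
E(t)\ge \frac12\|u_t\|_2^2+\frac\ell2\|\nabla u\|_2^2+\frac12(g\circ\nabla u)(t)-\int_\Omega F(x,u)\,dx.
\]
Using Step 1, $E(t)\ge G(\|\nabla u(t)\|_2)$ with
\[
G(\lambda)=\frac\ell2\lambda^2-\frac{S_p^{p_++\sigma}}{e\sigma p_-}\lambda^{p_++\sigma}.
\]
The constant $R$ is exactly the point where $\frac\ell2\lambda^2$ equals $\frac{S_p^{p_++\sigma}}{e\sigma}\lambda^{p_++\sigma}$. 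So on $[0,R]$ we have
\[
\frac{S_p^{p_++\sigma}}{e\sigma}\lambda^{p_++\sigma}\le \frac\ell2\lambda^2\Big(\frac{\lambda}{R}\Big)^{p_++\sigma-2},
\]
and therefore
\[
\int_\Omega F\le\frac{\ell}{2p_-}\|\nabla u\|_2^2 \qquad\text{whenever }\|\nabla u\|_2\le R.
\]
Hence
\[
E\ge\frac12\|u_t\|^2_2+\frac{\ell(p_--1)}{2p_-}\|\nabla u\|_2^2+\frac12 (g\circ\nabla u).
\]
Evaluated at $\|\nabla u\|_2=R$, the right-hand side is at least $d_0$. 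This matches the definition of $d_0$, which confirms the scaling.

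\textbf{Step 3: invariance.} By the energy identity \eqref{eq7}, $E(t)\le E(0)<d_0$ for all $t<T_{\max}$. Suppose $\|\nabla u(t_0)\|_2=R$ at a first time $t_0$; this is possible by continuity, since $u\in C([0,T];V)$ and $\|\nabla u_0\|_2<R$. Then Step 2 at $t_0$ gives $E(t_0)\ge d_0$, a contradiction. Hence $\|\nabla u(t)\|_2<R$ for all $t$, and Step 2 gives
\[
E(t)\ge c_E\widehat E(t),\qquad c_E=\min\Big\{\tfrac12,\tfrac{\ell(p_--1)}{2p_-}\Big\}.
\]
Consequently $\|u(t)\|_V+\|u_t(t)\|_2\le C\,(d_0/c_E)^{1/2}$ uniformly in time. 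The blow-up alternative \eqref{eq8} of Theorem~\ref{thm:local} then forces $T_{\max}=\infty$.

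\textbf{Step 4: upper bound.} We have $-\int_\Omega F\le \int_\Omega \frac{|u|^{p(x)}}{p(x)^2}\mathbf 1_{\{|u|<1\}}+\ldots$. The cleanest route is to bound $-F$ on $\{|u|<1\}$ by $C|u|^2$: since $p>2$, the quantity $|u|^{p}(|\log|u||+1)\le C|u|^2$ for $|u|<1$. On $\{|u|\ge1\}$ we simply have $-F\le |u|^{p(x)}/p(x)^2\le C|u|^{p_+}$. Together with Sobolev and $\|\nabla u\|_2<R$ (so that $\|\nabla u\|_2^{p_+}\le R^{p_+-2}\|\nabla u\|_2^2$), this gives $-\int_\Omega F\le C\|\nabla u\|_2^2$. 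Combined with $1-\int_0^t g\le1$, we obtain $E\le C_E\widehat E$.

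\textbf{Main obstacle.} The delicate point is the sign-changing logarithm in Steps 1--2, specifically getting the precise constant matching $R$ and $d_0$. If the crude bound $F\le |u|^{p_++\sigma}/(e\sigma p_-)$ does not reproduce $d_0$ exactly, I would instead use the exact maximization of $G$ on $[0,R]$. Any bound that makes the energy at the barrier $\ge d_0$ suffices.
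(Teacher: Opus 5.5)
Your proposal is correct and follows the paper's proof. It uses the same estimate $\int_\Omega F(x,u)\,dx\le\frac{\ell}{2p_-}\|\nabla u\|_2^2$ on $\{\|\nabla u\|_2\le R\}$, which gives exactly $d_0$ at the barrier, so your fallback to maximizing $G$ is unnecessary, together with the same first-hitting-time contradiction and the same appeal to the blow-up alternative \eqref{eq8}. The only difference is in the upper bound: the paper uses the pointwise estimate $[-F(x,r)]_+\le C_4|r|^2$ for all $r$ (since $F\ge0$ once $|r|\ge e^{1/p(x)}$) plus Poincar\'e, whereas you use Sobolev embedding into $L^{p_+}$ and the already-established bound $\|\nabla u\|_2<R$, which is equally valid here but makes $C_E$ depend on $R$ and on the invariance step.
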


For the decay estimates, it suffices to consider $E(0)>0$.
Indeed, \eqref{eq14} and the energy identity
\eqref{eq7} imply
\[
0\le E(t)\le E(0),\quad t\ge0.
\]
If $E(0)=0$, then $E(t)\equiv0$, and
\eqref{eq14} together with Poincar\'e's
inequality yields $u\equiv0$. The decay estimates
therefore hold trivially. For the following decay results we use the following kernel condition
used by Mustafa~\cite{Mustafa2018}, formulated for the locally absolutely
continuous kernel in Assumption~\ref{ass1}.

\begin{assumption}\label{assadd}
There exist a positive nonincreasing function
$\xi\in C^1([0,\infty))$ and a positive function
$\mathcal H\in C^1((0,\infty))$ such that
\begin{equation}\label{eq15}
g'(t)\le-\xi(t)\mathcal H(g(t))
\quad\text{for almost every }t>0.
\end{equation}
Either $\mathcal H(s)=ks$ for some $k>0$, or, for some
$r_0\in(0,g(0)]$, $\mathcal H$ is strictly increasing and strictly convex
on $(0,r_0]$, belongs to $C^2((0,r_0])$, and extends to $[0,r_0]$
with $\mathcal H(0)=\mathcal H'(0)=0$.
\end{assumption}

In the nonlinear case of Assumption \ref{assadd}, define
\begin{equation}\label{eq16}
\Psi(s)=\int_s^{r_0}\frac{d\tau}{\tau\mathcal H'(\tau)},
\quad 0<s\le r_0.
\end{equation}
Then $\Psi$ is strictly decreasing from $(0,r_0]$ onto $[0,\infty)$.

\begin{theorem}[Decay with an upper Sobolev bound on damping]
\label{thm:convex}
Under the hypotheses of Theorem~\ref{thm:global}, suppose that 
$m_+\le2^*$ and Assumption  \ref{assadd} hold. 
Then the following decay estimates hold.
\begin{enumerate}[label=\textup{(\roman*)}]
\item If $\mathcal H(s)=ks$ for some $k>0$, then
\[
E(t)\le CE(0)\exp\left(-c\int_0^t\xi(s)\,ds\right),
\quad t\ge0.
\]

\item If $\mathcal H$ is nonlinear, fix $t_*>0$ such that
$g(t_*)\le r_0$. Then
\begin{equation}\label{eq17}
E(t)\le CE(0)\Psi^{-1}\left(c\int_{t_*}^t\xi(s)\,ds\right),
\quad t\ge t_*.
\end{equation}
If, in addition, $\int_0^\infty\xi(s)\,ds=\infty$, then
\begin{equation}\label{eq18}
E(t)\le CE(0)\Psi^{-1}\left(c\int_0^t\xi(s)\,ds\right),
\quad t\ge0.
\end{equation}
\end{enumerate}
Here $c,C>0$ are constants that may vary between estimates.
They depend only on the fixed parameters and, in case
\textup{(ii)}, may also depend on $t_*$.
\end{theorem}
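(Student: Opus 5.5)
We follow the perturbed-energy scheme of Mustafa~\cite{Mustafa2018}, adapted to the nonlinear damping and the logarithmic source. By Theorem~\ref{thm:global} the solution is global, $0\le E(t)\le E(0)$, and \eqref{eq14} holds, so $\|\nabla u(t)\|_2^2+\|u_t(t)\|_2^2\le CE(0)$ for all $t$. Since the solution stays in the stable region with $\|\nabla u\|_2<R$, we will also use the source bound
\[
\int_\Omega f(x,u)u\,dx\le\int_{\{|u|>1\}}|u|^{p(x)}\log|u|\,dx\le \frac{1}{e\sigma}\|u\|_{p_++\sigma}^{p_++\sigma}\le \theta\,\ell\|\nabla u\|_2^2
\]
with some $\theta<1$. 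This follows from $\log s\le s^\sigma/(e\sigma)$ and the definition of $R$ in \eqref{eq12}. Thus the source is dominated by the elastic energy.

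\textbf{Step 1.} Introduce the functionals
\[
\Phi(t)=(u_t,u),\qquad \chi(t)=-\int_\Omega u_t\int_0^t g(t-s)(u(t)-u(s))\,ds\,dx .
\]
Use the weak formulation \eqref{eq6} with $z=u$ and with $z=\int_0^t g(t-s)(u(t)-u(s))ds$. Both test functions lie in $X_m$, since $m_+\le2^*$ gives $V\hookrightarrow L^{m(x)}$; the derivatives are then justified through the temporal regularization behind \eqref{eq7}. This yields
\[
\Phi'\le \|u_t\|_2^2-(\ell-\theta\ell-\delta)\|\nabla u\|_2^2+C_\delta(g\circ\nabla u)+\Bigl|\int_\Omega b(x,u_t)u\,dx\Bigr|.
\]
For $\chi$ we get
\[
\chi'\le -\Bigl(\int_0^tg-\delta\Bigr)\|u_t\|_2^2+\delta\|\nabla u\|_2^2+C_\delta\bigl(g\circ\nabla u\bigr)-C_\delta(g'\circ\nabla u)+\text{(damping and source terms)}.
\]
Here $g\circ\nabla u=\int_0^tg(t-s)\|\nabla u(t)-\nabla u(s)\|_2^2ds$. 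In $\chi'$ the source term is bounded by $\delta\|\nabla u\|_2^2+C_\delta\, g\circ\nabla u$, using $f(\cdot,u)\in L^{2^*/(p_+-1)}$ and Sobolev embedding with the uniform bound on $\|\nabla u\|_2$.

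\textbf{Step 2 (main obstacle: damping terms).} Estimate $\int b(x,u_t)u$ by Young's inequality \eqref{eq4}:
\[
\int_\Omega |u_t|^{m(x)-1}|u|\le \delta\int_\Omega|u|^{m(x)}+C_\delta\int_\Omega|u_t|^{m(x)}.
\]
Then use \eqref{eq2} and $m_+\le2^*$ to bound $\int|u|^{m(x)}\le C\|\nabla u\|_2^{m_-}\bigl(1+\|\nabla u\|_2^{m_+-m_-}\bigr)\le C(E(0))\|\nabla u\|_2^2$; this uses $m_-\ge2$ and the uniform bound. The memory damping term is handled the same way, with $\|\int_0^tg(t-s)(u(t)-u(s))ds\|_{m(x)}$ controlled by $C\,(g\circ\nabla u)^{1/2}$. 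Every term $C\int|u_t|^{m(x)}$ equals $-CE'$-type dissipation by \eqref{eq7}, which is why the perturbation is built with a large multiple of $E$. Set
\[
L=NE+\varepsilon_1\Phi+\varepsilon_2\chi .
\]
Choose $t_0$ with $\int_0^{t_0}g>0$, then $\delta$, $\varepsilon_i$ and $N$ in the standard order. This gives $L\sim E$ and
\[
L'(t)\le -\beta E(t)+C\,(g\circ\nabla u)(t),\qquad t\ge t_0 .
\]

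\textbf{Step 3 (kernel condition).} In the linear case $\mathcal H(s)=ks$, multiply by $\xi$. Since $\xi g\le -g'/k$, we have $\xi(g\circ\nabla u)\le -C(g'\circ\nabla u)\le -CE'$. As $\xi$ is nonincreasing, $(\xi L+CE)'\le -\beta\xi E$, and Gronwall's inequality gives (i); the interval $[0,t_0]$ is absorbed into the constant. In the nonlinear case, follow Mustafa. On $[0,t_*]$, use $g'\le -\xi\mathcal H(g)\le -c\,\xi g$ there, by positivity and continuity of $g$. For $s\ge t_*$, bound $\int_{t_*}^t g(s)\|\nabla u(t)-\nabla u(t-s)\|^2$ through Jensen's inequality for the convex $\mathcal H$ applied to the probability measure proportional to $\|\nabla u(t)-\nabla u(t-s)\|^2$. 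This requires $\int_0^t\|\nabla u(t)-\nabla u(t-s)\|^2 ds\le Ct$, or better a finite bound, obtained via the standard trick of estimating $\eta(t)=\frac{1}{t}\int\cdots<1$. The result is
\[
\xi\,(g\circ\nabla u)\le C\,\overline{\mathcal H}^{-1}(-CE')\ \text{-type bound}.
\]
With the convex conjugate and the multiplier $\mathcal H'(\varepsilon_0E(t)/E(0))$ we obtain $R'\le -c\,\xi\,\mathcal H_2(R)$, where $R\sim E/E(0)$ and $\mathcal H_2(s)=s\mathcal H'(\varepsilon_0 s)$. Integrating with \eqref{eq16} gives \eqref{eq17}. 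When $\int_0^\infty\xi=\infty$, the bounded interval $[0,t_*]$ contributes only a constant shift in the argument of $\Psi^{-1}$, which gives \eqref{eq18}.
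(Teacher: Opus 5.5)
There is a genuine gap in the nonlinear case. Your overall structure matches the paper's: the same multipliers $(u,u_t)$ and $-(u_t,w)$, the same use of $m_+\le2^*$ to get $\int_\Omega|z|^{m(x)}\,dx\le C\|\nabla z\|_2^2$, the same functional $L$ with $L'\le-cE+C\int_0^tg(t-s)\|\nabla u(t)-\nabla u(s)\|_2^2\,ds$, and the same $\xi L+CE$ argument in the linear case. In the nonlinear case you also follow the paper: split at $t_*$, apply Jensen, use the multiplier $\mathcal H'(\varepsilon E(t)/E(0))$, and integrate through $\Psi$.

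The gap is the step you mention only in passing. Jensen's inequality on $[t_*,t]$ needs a normalizing constant $B_0\ge\int_0^t\|\nabla u(t)-\nabla u(t-s)\|_2^2\,ds$ that does not depend on $t$. The bound $\|\nabla u\|_2^2\le CE(0)$ only gives $B_0=Ct$. Normalizing by $1/t$ (your $\eta(t)=\frac1t\int\cdots<1$) does not turn this into a finite bound. It moves the factor $t$ into the argument of $\mathcal H$ and into the term $B_0q\mathcal H'(q)$ of the convexity step. That route gives a weaker general-decay estimate with explicit $t$-dependent factors, not \eqref{eq17} with $c,C$ independent of time. As written, the proposal either proves a weaker statement or assumes the key bound.

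The missing ingredient is integrability of the energy, $\int_0^\infty E(t)\,dt\le CE(0)$ (Lemma~\ref{lem:energyintegrability}). Together with $tE(t)\le\int_0^tE(s)\,ds$ and \eqref{eq14}, it gives $\int_0^t\|\nabla u(t)-\nabla u(s)\|_2^2\,ds\le\frac2{c_E}\bigl(tE(t)+\int_0^tE\bigr)\le C_{13}E(0)$.

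This does not follow from your inequality for $L'$, because beyond $t_*$ the memory term is not dominated by $-E'$. The paper's argument runs as follows.
\begin{itemize}
\item Since $\mathcal H>0$, condition \eqref{eq15} forces $-g'>0$ a.e. Hence $c_\delta=\int_0^\infty g^2/(\delta g-g')\,ds$ satisfies $\delta c_\delta\to0$ by dominated convergence.
\item By a weighted Cauchy--Schwarz inequality, $\|\nabla w\|_2^2\le c_\delta\int_0^t[\delta g(t-s)-g'(t-s)]\|\nabla u(t)-\nabla u(s)\|_2^2\,ds$. For this you must keep the finer bound for $L'$ with $\|\nabla w\|_2^2$ on the right, not the full memory energy.
\item Add the tail convolution $A_g(t)=\int_0^t\bigl(\int_{t-s}^\infty g\bigr)\|\nabla u(s)\|_2^2\,ds$. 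Its derivative supplies $-\frac12\int_0^tg(t-s)\|\nabla u(t)-\nabla u(s)\|_2^2\,ds$ at the cost of $2(1-\ell)\|\nabla u\|_2^2$.
\item The combination $\mathcal Y=L+\kappa c_\delta E+(\kappa\delta c_\delta+\rho)A_g$ then satisfies $\mathcal Y'\le-c_2E$ for suitable $\kappa,\delta,\rho$. Integrating gives the bound.
\end{itemize}
With $B_0=C_{13}E(0)$, your Jensen and convexity steps go through as in the paper.

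Your remark that replacing $\int_{t_*}^t\xi$ by $\int_0^t\xi$ costs only a multiplicative constant is correct, but you should justify it. It follows from $\frac{d}{dy}\log\Psi^{-1}(y)=-\mathcal H'(\Psi^{-1}(y))\ge-\mathcal H'(r_0)$. This argument does not even require $\int_0^\infty\xi=\infty$.
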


\begin{corollary}[Decay for linear and power relaxation]
\label{thm:decay}
Under the hypotheses of Theorem~\ref{thm:convex}, suppose
that the function $\mathcal H$ in Assumption~\ref{assadd}
is given by $\mathcal H(s)=s^q$ for some $1\le q<2$.
Then there exist constants $c,C>0$ such that, for all
$t\ge0$,
\begin{equation}\label{eq19}
E(t)\le
\begin{cases}
\displaystyle
CE(0)\exp\left(-c\int_0^t\xi(s)\,ds\right),
& q=1,\\[6pt]
\displaystyle
CE(0)\left(1+c(q-1)\int_0^t\xi(s)\,ds\right)^{-1/(q-1)},
& 1<q<2.
\end{cases}
\end{equation}
\end{corollary}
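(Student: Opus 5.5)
The corollary follows from Theorem~\ref{thm:convex} once $\Psi$ is computed explicitly for $\mathcal H(s)=s^q$. The case $q=1$ is immediate: $\mathcal H(s)=s$ is the linear case $\mathcal H(s)=ks$ with $k=1$, so part (i) of Theorem~\ref{thm:convex} gives the exponential bound in \eqref{eq19} for all $t\ge0$.

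For $1<q<2$, first check that $\mathcal H(s)=s^q$ satisfies the nonlinear alternative of Assumption~\ref{assadd}. With $r_0=g(0)$, it is strictly increasing and strictly convex on $(0,r_0]$, lies in $C^2((0,r_0])$, and extends to $[0,r_0]$ with $\mathcal H(0)=\mathcal H'(0)=0$, since $q>1$. From \eqref{eq16}, using $\tau\mathcal H'(\tau)=q\tau^q$,
\[
\Psi(s)=\int_s^{r_0}\frac{d\tau}{q\tau^{q}}=\frac{1}{q(q-1)}\bigl(s^{1-q}-r_0^{1-q}\bigr),
\qquad
\Psi^{-1}(y)=\bigl(r_0^{1-q}+q(q-1)y\bigr)^{-1/(q-1)}.
\]
Factoring out $r_0^{1-q}$ gives $\Psi^{-1}(y)=r_0\bigl(1+q(q-1)r_0^{q-1}y\bigr)^{-1/(q-1)}$. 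The factor $r_0$ goes into $C$, and the factor $q\,r_0^{q-1}$ together with the constant $c$ from Theorem~\ref{thm:convex} goes into a new constant $c$. This turns $\Psi^{-1}\bigl(c\int\xi\bigr)$ into $\bigl(1+c(q-1)\int\xi\bigr)^{-1/(q-1)}$.

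The main step is obtaining the bound for all $t\ge0$ without assuming $\int_0^\infty\xi=\infty$. Since $g(0)=r_0$ and $g$ is nonincreasing, I take $t_*=0$ in \eqref{eq17}. If the proof of Theorem~\ref{thm:convex} genuinely requires $t_*>0$, I instead fix any $t_*>0$; then $g(t_*)\le g(0)=r_0$ automatically, and I argue in two ranges.
\begin{itemize}
\item For $t\ge t_*$, \eqref{eq17} gives the bound with $\int_{t_*}^t\xi$. Because $\xi$ is positive and nonincreasing, $\int_0^{t_*}\xi\le t_*\xi(0)$. Hence
\[
1+c'\int_{t_*}^t\xi\ \ge\ 1+c'\int_0^t\xi-c't_*\xi(0)\ \ge\ \delta\Bigl(1+c'\int_0^t\xi\Bigr)
\]
for some small $\delta>0$, after shrinking $c'$ if necessary. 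Shrinking $c'$ only weakens the bound, so it may be done freely.
\item For $0\le t\le t_*$, use $E(t)\le E(0)$ together with $1+c'\int_0^t\xi\le 1+c't_*\xi(0)$, and enlarge $C$.
\end{itemize}
The same two-range argument is consistent with the $q=1$ case, where it is not needed.
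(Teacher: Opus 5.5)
Your proposal is correct and follows essentially the paper's argument. Both compute $\Psi^{-1}(y)=[r_0^{1-q}+q(q-1)y]^{-1/(q-1)}$ with $r_0=g(0)$, apply \eqref{eq17}, absorb the finite shift $I_*=\int_0^{t_*}\xi$ through $1+c(q-1)\int_0^t\xi\le[1+c(q-1)I_*][1+c(q-1)\int_{t_*}^t\xi]$, and enlarge $C$ on $[0,t_*]$. Your $\delta=1/(1+c'I_*)$ already works without shrinking $c'$.

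The option $t_*=0$ is not available. Theorem~\ref{thm:convex} requires $t_*>0$ because the functional $L$ of Lemma~\ref{lem:basicL} is built with $t_0=t_*$ and needs $\Gamma_0=\int_0^{t_0}g>0$. Your fallback with a fixed $t_*>0$ is therefore the route to take.
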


The next result removes the upper Sobolev restriction on the damping
exponent.

\begin{theorem}[Decay without an upper Sobolev bound on damping]
\label{thm:super}
Under the hypotheses of Theorem~\ref{thm:global}, suppose that 
$m_+>2$, $u_0\in L^{m(x)}(\Omega)$ and Assumption  \ref{assadd} hold. 
Set
\begin{equation}\label{eq20}
\gamma=\max\left\{\frac{m_+-2}{2},\frac1{m_--1}\right\}.
\end{equation}
Then there is a constant $\widehat{C}>0$ such that
\begin{equation}\label{eq21}
E(t)\le E(0)\left(
\frac{1+\widehat{C}}{1+\log(1+t)}\right)^{1/(\gamma+1)},
\quad t\ge0.
\end{equation}
\end{theorem}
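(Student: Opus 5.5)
Because \eqref{eq14} gives $E(t)\ge c_E\widehat E(t)$, it is enough to show that the energy is integrable against a time weight. The plan is to prove an integral inequality of Komornik type,
\[
\int_S^\infty \phi'(t)\,E(t)^{\gamma+1}\,dt\le C\,E(0)^{\gamma}E(S),\qquad \phi(t)=\log(1+t),
\]
for every $S\ge0$. Given this, a weighted version of Komornik's lemma (change variables $\tau=\phi(t)$, then apply the Martinez-type lemma) gives $E(t)\le E(0)\bigl((1+\widehat C)/(1+\phi(t))\bigr)^{1/(\gamma+1)}$, which is \eqref{eq21}. The weight $\phi'(t)=1/(1+t)$ is nonincreasing and integrable against the dissipation; this is exactly where the logarithmic rate comes from.

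To obtain the inequality, I multiply the weak formulation \eqref{eq6} by $\phi'(t)E(t)^\gamma u$ and by the memory multiplier $-\phi'E^\gamma\int_0^t g(t-s)(u(t)-u(s))\,ds$. This is justified by testing with $z=u(t)$; since $u$ need not lie in $L^{m(x)}$ when $m_+>2^*$, I first prove the admissibility below. I integrate over $(S,T)$ and use the energy identity \eqref{eq7} in the usual way. The resulting terms are handled as follows.
\begin{enumerate}[label=\textup{(\alph*)}]
\item The boundary terms are bounded by $CE(0)^\gamma E(S)$, using \eqref{eq14}.
\item The term $\int\phi'E^\gamma\|u_t\|_2^2$ is bounded via Young's inequality by the dissipation $\int\rho_m(u_t)$. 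In the region $|u_t|\le1$ I use Hölder with the exponent $m/2$; this is the source of the exponent $(m_+-2)/2$ in $\gamma$.
\item The source term is controlled by the stable-set estimate from Theorem~\ref{thm:global}, giving $\int_\Omega f(x,u)u\le(1-\delta)\ell\|\nabla u\|_2^2$, where $\delta>0$ comes from $\|\nabla u\|_2<R$.
\item The damping pairing $\int\phi'E^\gamma\int_\Omega|u_t|^{m-1}|u|$ is the delicate term.
\end{enumerate}

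For the damping pairing (d), Young's inequality \eqref{eq4} gives
\[
\varepsilon\,\phi'E^{\gamma}\rho_m(u)+C_\varepsilon\,\phi'E^{\gamma}\rho_m(u_t)\cdot(\text{powers of }E).
\]
The exponent $1/(m_--1)$ in $\gamma$ is what lets the second term be bounded by $CE(0)^\gamma\int\rho_m(u_t)\le CE(0)^\gamma E(S)$. For the first term, the key step is a time-weighted displacement estimate. Writing $u(t)=u_0+\int_0^t u_t$ and using convexity of the modular,
\[
\rho_m(u(t))\le C\Bigl(\rho_m(u_0)+(1+t)^{m_+-1}\int_0^t\rho_m(u_t)\Bigr),
\]
so $\phi'(t)\rho_m(u(t))$ is not directly bounded. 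I therefore use instead that $\|u(t)\|_{m(x)}\le\|u_0\|_{m(x)}+C(1+t)^{1/m_-'}E(0)^{1/m_-}$, and pair this against the damping, since the multiplier is linear in $u$. I expect this to produce a factor $(1+t)^{1/m_-'}$ that must be compensated by $\phi'$. Arranging the exponents so that this works, possibly by choosing the weight $E^\gamma$ and the Young exponent $m'$ carefully, is the main obstacle, and I would try first to use Hölder in time with $\int_S^T\rho_m(u_t)\le E(S)$.

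For the memory terms, I split $\int_0^t g(t-s)\|\nabla u(t)-\nabla u(s)\|_2^2\,ds$ at age $t-s=t_1$. On $[0,t_1]$, Assumption~\ref{assadd} gives $g\le -g'/(\xi(t_1)\mathcal H'\ldots)$ (in the linear case simply $g\le -g'/(k\xi(t_1))$), so this part is bounded by the dissipation term in \eqref{eq7}. The tail is at most $4\sup\|\nabla u\|_2^2\int_{t_1}^\infty g$, which is small once $t_1$ is large; because $\phi'E^\gamma$ is nonincreasing, this piece is absorbed into the left-hand side after integration. Finally I let $T\to\infty$.
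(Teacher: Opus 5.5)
There is a genuine gap in the damping pairing (d). You leave this term unresolved, and the route you sketch for it does not close when $m$ really varies.

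First, the norm grows like $\|u(t)\|_{m(x)}\lesssim(1+t)^{1/m_+'}$, not $(1+t)^{1/m_-'}$: after Hölder in time, the branch $\rho_m(u_t)^{1/m_+}$ of the norm--modular relation dominates. Second, when $\rho_m(u_t)\le1$ you only have $\|b(x,u_t)\|_{m'(x)}\le\rho_m(u_t)^{1/m_-'}$. Making the dissipation appear linearly therefore forces the Young exponent $m_-$ onto $\phi'\|u\|_{m(x)}\lesssim(1+t)^{-1/m_+}$. This produces the weight $(1+t)^{-m_-/m_+}$, which is not dominated by $\phi'=(1+t)^{-1}$ if $m_-<m_+$, so nothing can be absorbed.

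The missing idea is to split pointwise in $x$, distributing the time weight so that it matches the modular displacement bound you wrote down and then set aside. From $u=u_0+\int_0^t u_t$, Hölder in time and \eqref{eq7},
\[
\int_\Omega\frac{|u(x,t)|^{m(x)}}{(1+t)^{m(x)-1}}\,dx\le K_0:=2^{m_+-1}\bigl[\rho_m(u_0)+E(0)\bigr].
\]
Young's inequality with exponents $m(x)$ and $m'(x)$ then gives
\[
E^\gamma|u_t|^{m(x)-1}|u|\le\varepsilon E^{\gamma+1}\frac{|u|^{m(x)}}{(1+t)^{m(x)-1}}+C_\varepsilon(1+t)E^{\gamma-\frac{1}{m(x)-1}}|u_t|^{m(x)}.
\]
After dividing by $1+t$, the first term integrates to at most $\varepsilon K_0E^{\gamma+1}/(1+t)$ and is absorbed. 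In the second term the factor $1+t$ cancels exactly. What remains is the dissipation $|u_t|^{m(x)}$ times $E^{\gamma-1/(m(x)-1)}\le\max\{1,E(0)^\gamma\}$; this is where $\gamma\ge1/(m_--1)$ enters.

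Your memory-tail step also fails as written. Bounding the tail by $4\sup_t\|\nabla u\|_2^2\int_{t_1}^\infty g$ leaves, after weighting, a term $\varepsilon E(0)\int\phi'E^\gamma\,dt$. This cannot be absorbed into $\int\phi'E^{\gamma+1}\,dt$, and it may even diverge when $E$ decays only logarithmically. You must keep $\|\nabla u(t-s)\|_2^2$ inside the integral, use $\|\nabla u(t)-\nabla u(t-s)\|_2^2\le2\|\nabla u(t)\|_2^2+2\|\nabla u(t-s)\|_2^2$, and apply Tonelli with $r=t-s$ and $W(r+s)\le W(r)$, where $W=E^\gamma/(1+t)$. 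This bounds the tail by
\[
4\int_a^\infty g\int_0^TW\|\nabla u\|_2^2\,dt\le\frac4{c_E}\int_a^\infty g\int_0^T\frac{E^{\gamma+1}}{1+t}\,dt .
\]

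This shift closes on $[0,T]$. On $[S,T]$, however, the shifted history reaches times $r<S$, and the natural bound for that piece is of order $E(0)E(S)^\gamma$. That is not $O(E(S))$ when $\gamma<1$ (e.g.\ $m\equiv3$). So do not aim for a Komornik inequality for every $S$.

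The paper proves only $\int_0^\infty E^{\gamma+1}/(1+t)\,dt\le\widehat CE(0)^{\gamma+1}$ and concludes by monotonicity:
\[
E(t)^{\gamma+1}\log(1+t)\le\int_0^tE(s)^{\gamma+1}/(1+s)\,ds .
\]
This is exactly where the exponent $1/(\gamma+1)$ comes from. The Martinez lemma you cite would give $1/\gamma$, so your bookkeeping there is inconsistent anyway. Finally, the second multiplier with $w$ is unnecessary, since the $\|u_t\|_2^2$ term is handled by the dissipation as in your (b).
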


\begin{corollary}[Constant supercritical damping in three dimensions]
\label{cor:log}
Assume the hypotheses of Theorem~\ref{thm:global} and
Assumption~\ref{assadd}. Let $n=3$, $m(x)\equiv m>6$,
and $u_0\in V\cap L^m(\Omega)$. Then there is a constant $C>0$ such that
\[
E(t)\le CE(0)[1+\log(1+t)]^{-2/m},\quad t\ge0.
\]
\end{corollary}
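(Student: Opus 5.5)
The plan is to obtain Corollary~\ref{cor:log} by specializing Theorem~\ref{thm:super} to $n=3$ and constant exponent $m(x)\equiv m>6$. The exponent arithmetic is as follows. Since $2^*=6$ for $n=3$, the hypothesis $m>6$ means the damping lies beyond the Sobolev range. This is exactly the regime of Theorem~\ref{thm:super}; there we only need $m_+>2$, which holds trivially. With $m_-=m_+=m$, definition \eqref{eq20} gives
\[
\gamma=\max\left\{\frac{m-2}{2},\frac{1}{m-1}\right\}.
\]
For $m>6$ we have $\frac{m-2}{2}>2>\frac{1}{m-1}$, so $\gamma=\frac{m-2}{2}$ and $\gamma+1=\frac m2$, hence $\frac{1}{\gamma+1}=\frac2m$.

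The remaining hypotheses transfer directly. The hypotheses of Theorem~\ref{thm:global} and Assumption~\ref{assadd} are assumed verbatim. The condition $u_0\in L^{m(x)}(\Omega)$ becomes $u_0\in L^m(\Omega)$, which together with $u_0\in V$ is the stated assumption $u_0\in V\cap L^m(\Omega)$. Assumption~\ref{ass1} is part of the hypotheses of Theorem~\ref{thm:global}. In particular, $(p_+-1)m/(m-1)<6$ and $p_+<6$ must hold; these are not automatic, but they are included in what is assumed.

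Theorem~\ref{thm:super} then yields
\[
E(t)\le E(0)\bigl(1+\widehat C\bigr)^{2/m}\bigl[1+\log(1+t)\bigr]^{-2/m},\quad t\ge0,
\]
and we set $C=(1+\widehat C)^{2/m}$. There is no real obstacle. The only step needing care is checking which branch of the maximum in \eqref{eq20} is active, and this comparison is immediate for $m>6$.
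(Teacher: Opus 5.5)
Your proposal is correct and follows the paper's own argument: specialize Theorem~\ref{thm:super} to $m_-=m_+=m>6$, note from \eqref{eq20} that $\gamma=(m-2)/2$, so $1/(\gamma+1)=2/m$, and take $C=(1+\widehat C)^{2/m}$ in \eqref{eq21}. As in the paper, $C$ inherits the dependence of $\widehat C$ on the initial data, which the statement allows.
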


\section{Local existence and continuation}\label{sec:localproof}
We first establish growth bounds for the logarithmic source
and Lipschitz estimates for its truncations. These estimates
allow us to construct approximate solutions and pass to the limit.
\begin{lemma}[Growth estimates for the logarithmic source]\label{lem:growth}
Choose
\[
0<\sigma<\min\left\{2^*-p_+,\,
\frac{2^*}{m_-'}-(p_+-1)\right\},
\] there is a constant $C>0$,
independent of $x$ and $s$, such that
\[
\begin{aligned}
|f(x,s)|
&\le C\bigl(1+|s|^{p_+-1+\sigma}\bigr),\\
|\partial_s f(x,s)|
&\le C\bigl(1+|s|^{p_+-2+\sigma}\bigr),\\
|F(x,s)|
&\le C\bigl(1+|s|^{p_++\sigma}\bigr)
\end{aligned}
\]
for almost every $x\in\Omega$ and every $s\in\mathbb R$.
Moreover, $f(x,\cdot)\in C^1(\mathbb R)$, with
\[
\partial_s f(x,s)
=
|s|^{p(x)-2}
\bigl[1+(p(x)-1)\log|s|\bigr]
\quad\text{for }s\ne0,
\quad
\partial_s f(x,0)=0.
\]
\end{lemma}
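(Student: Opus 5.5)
The plan is to compute everything pointwise in $s$ for fixed $x$, using only $2<p_-\le p(x)\le p_+$, and to split into the regimes $|s|\le 1$ and $|s|>1$. Since $f(x,\cdot)$ is odd, it suffices to treat $s>0$; oddness of $f$ then gives evenness of $\partial_s f$ away from zero. The only tool is the elementary bound
\[
\sup_{t\ge1}t^{-\sigma}\log t=\frac1{e\sigma},
\qquad
\sup_{0<t\le1}t^{\beta}|\log t|=\frac1{e\beta}\quad(\beta>0).
\]
All constants will depend only on $p_-$, $p_+$ and $\sigma$, which gives uniformity in $x$. The two upper bounds on $\sigma$ play no role in this lemma beyond $\sigma>0$; they are recorded for later use, namely $p_++\sigma<2^*$ and $(p_+-1+\sigma)m_-'<2^*$.

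\emph{Differentiability.} For $s>0$, the product rule applied to $s^{p(x)-1}\log s$ gives
\[
\partial_s f(x,s)=s^{p(x)-2}\bigl[1+(p(x)-1)\log s\bigr].
\]
At $s=0$, the difference quotient is $f(x,s)/s=|s|^{p(x)-2}\log|s|$. Because $p(x)-2\ge p_--2>0$, this quotient tends to $0$, so $\partial_sf(x,0)=0$. Continuity of $\partial_s f$ at $0$ follows in the same way, since both $|s|^{p(x)-2}$ and $|s|^{p(x)-2}|\log|s||$ tend to $0$ by the second elementary bound with $\beta=p(x)-2$. On $\mathbb R\setminus\{0\}$ continuity is clear. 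This gives $f(x,\cdot)\in C^1(\mathbb R)$.

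\emph{Bounds for $|s|\le1$.} Here we have
\[
|f|\le |s|^{p_--1}|\log|s||\le \frac1{e(p_--1)},
\qquad
|\partial_s f|\le 1+(p_+-1)\frac1{e(p_--2)}.
\]
The potential satisfies $|F|\le \frac1{ep_-}\cdot\frac1{p_-}+\frac1{p_-^2}$. These are all constants independent of $x$.

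\emph{Bounds for $|s|>1$.} Here $\log|s|\le \frac1{e\sigma}|s|^\sigma$ and $|s|^{p(x)-1}\le|s|^{p_+-1}$. Hence
\[
|f|\le \frac1{e\sigma}|s|^{p_+-1+\sigma},
\qquad
|\partial_s f|\le |s|^{p_+-2}+(p_+-1)\frac1{e\sigma}|s|^{p_+-2+\sigma}\le C|s|^{p_+-2+\sigma}.
\]
Similarly, $|F|\le\frac1{p_-e\sigma}|s|^{p_++\sigma}+\frac1{p_-^2}|s|^{p_+}\le C|s|^{p_++\sigma}$.

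Combining the two regimes gives the stated bounds of the form $C(1+|s|^{\cdot})$. There is no real obstacle. The only point requiring care is keeping the constants uniform in $x$, which the monotonicity in the exponent handles: for $|s|\le1$ we bound by the $p_-$ power, and for $|s|>1$ by the $p_+$ power. A second point is that the factor $1/(p_--2)$ arising at the origin is finite precisely because $p_->2$.
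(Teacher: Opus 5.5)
Your proposal is correct and follows the paper's proof essentially line by line. It uses the same elementary bounds from \eqref{eq22}, the same difference-quotient check at the origin, and the same split into $|s|\le1$ (bounding by the $p_-$ power) and $|s|>1$ (bounding by the $p_+$ power), with the same constants. One small imprecision: a supremum bound on $t^{\beta}|\log t|$ does not by itself give $t^{\beta}|\log t|\to0$ as $t\downarrow0$. The paper records this limit separately in \eqref{eq22}, or you can get it directly from $t^{\beta}|\log t|=t^{\beta/2}\cdot t^{\beta/2}|\log t|\le \frac{2}{e\beta}\,t^{\beta/2}$.
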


\begin{proof}
By direct differentiation, one obtains 
\begin{align*}
\frac{d}{dr}(r^{-\sigma}\log r)
&=r^{-\sigma-1}(1-\sigma\log r),\quad r\ge1,\\
\frac{d}{dr}(-r^a\log r)
&=r^{a-1}(-a\log r-1),\quad 0<r\le1,\quad a>0.
\end{align*}
The derivatives change from positive to negative at $e^{1/\sigma}$ and
$e^{-1/a}$, respectively. Therefore, 
\begin{equation}
\label{eq22}
\sup_{r\ge1}r^{-\sigma}\log r=\frac1{e\sigma},
\quad
\sup_{0<r\le1}r^a|\log r|=\frac1{ea},
\quad
\lim_{r\downarrow0}r^a|\log r|=0.
\end{equation}

For $s \neq 0$,
\[
\partial_s f(x, s) = |s|^{p(x)-2}\big[1 + (p(x)-1)\log|s|\big].
\]
For $0 < |h| \leq 1$, as $h\to 0$, 
\[
\left| \frac{f(x, h) - f(x, 0)}{h} \right|
= |h|^{p(x)-2}|\log|h||
\leq |h|^{p_- -2}|\log|h|| \to 0,
\]
so $\partial_s f(x, 0) = 0$. Moreover, for $0 < |s| \leq 1$, as $s\to 0$, 
\[
|\partial_s f(x, s)| \leq |s|^{p_- -2}\big[1 + (p_+ -1)|\log|s||\big] \to 0,
\]
hence $\partial_s f(x, \cdot)$ is continuous at $0$. Therefore $f(x, \cdot) \in C^1(\mathbb R)$.

(1) For $0<|s|\le1$, by using \eqref{eq22}, we obtain 
\begin{align*}
|f(x,s)|
&=|s|^{p(x)-1}|\log|s||
\le |s|^{p_--1}|\log|s||
\le\frac1{e(p_--1)},\\
|\partial_s f(x,s)|
&\le |s|^{p(x)-2}+(p(x)-1)|s|^{p(x)-2}|\log|s||\\
&\le1+(p_+-1)|s|^{p_--2}|\log|s||
\le1+\frac{p_+-1}{e(p_--2)},\\
|F(x,s)|
&\le\frac{|s|^{p(x)}}{p(x)}|\log|s||
+\frac{|s|^{p(x)}}{p(x)^2}\\
&\le\frac{|s|^{p_-}}{p_-}|\log|s||+\frac1{p_-^2}
\le\frac1{ep_-^2}+\frac1{p_-^2}.
\end{align*}
These bounds also hold at zero by continuity. 

(2) For $|s|\ge1$, using again \eqref{eq22}, we obtain 
\begin{align*}
|f(x,s)|
&=|s|^{p(x)-1}\log|s|
\le\frac1{e\sigma}|s|^{p(x)-1+\sigma}
\le\frac1{e\sigma}|s|^{p_+-1+\sigma},\\
|\partial_s f(x,s)|
&\le |s|^{p(x)-2}+(p_+-1)|s|^{p(x)-2}\log|s|\\
&\le |s|^{p_+-2}+\frac{p_+-1}{e\sigma}|s|^{p_+-2+\sigma}
\le\left(1+\frac{p_+-1}{e\sigma}\right)|s|^{p_+-2+\sigma},\\
|F(x,s)|
&\le\frac1{p_-}|s|^{p(x)}\log|s|+\frac1{p_-^2}|s|^{p(x)}\\
&\le\frac1{e\sigma p_-}|s|^{p_++\sigma}+\frac1{p_-^2}|s|^{p_+}
\le\left(\frac1{e\sigma p_-}+\frac1{p_-^2}\right)|s|^{p_++\sigma}.
\end{align*}
This completes the proof.
\end{proof}

\begin{lemma}[Lipschitz continuity of the truncated source]
\label{lem:cutoff}
Let $\vartheta\in C_c^\infty(\mathbb R)$ be even and satisfy
\[
0\le\vartheta\le1,
\quad
\vartheta(s)=1\quad\text{for }|s|\le1,
\quad
\vartheta(s)=0\quad\text{for }|s|\ge2.
\]
For $n\in\mathbb N$, set
\[
f_n(x,s)=\vartheta(s/n)f(x,s).
\]
For each fixed $n$, 
\begin{enumerate}
  \item the Nemytskii operator
$f_n:H\to H$ is globally Lipschitz continuous.
  \item the operators
\[
f_n:L^{q_\sigma}(\Omega)\to L^{m_-'}(\Omega), \quad f_n:L^{q_\sigma}(\Omega)\to L^{m'(x)}(\Omega)
\]
are locally Lipschitz continuous, with Lipschitz constants
on bounded sets independent of $n$. Here
\[q_\sigma:=(p_+-1+\sigma)m_-'.\]
\end{enumerate}
\end{lemma}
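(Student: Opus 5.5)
The argument is pointwise, followed by Hölder's inequality. We first bound $\partial_s f_n$ uniformly in $x$. Write
\[
\partial_s f_n(x,s)=\tfrac1n\vartheta'(s/n)f(x,s)+\vartheta(s/n)\partial_s f(x,s).
\]
Since $\vartheta'(s/n)$ vanishes unless $n\le|s|\le2n$, we have $\tfrac1n|\vartheta'(s/n)|\le \|\vartheta'\|_\infty/|s|$ on that support. Lemma~\ref{lem:growth} gives $|f(x,s)|\le C(1+|s|^{p_+-1+\sigma})$, so
\[
\tfrac1n|\vartheta'(s/n)||f(x,s)|\le C\bigl(1+|s|^{p_+-2+\sigma}\bigr),
\]
with $C$ independent of $n$ (use $|s|\ge1$). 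Combining this with the bound on $\partial_s f$ from Lemma~\ref{lem:growth} yields
\[
|\partial_s f_n(x,s)|\le C_0\bigl(1+|s|^{p_+-2+\sigma}\bigr)\quad\text{for all }s,\ \text{a.e. }x,
\]
with $C_0$ independent of $n$. Moreover, for fixed $n$, $\partial_s f_n$ is supported in $|s|\le 2n$, so $|\partial_s f_n|\le C_0(1+(2n)^{p_+-2+\sigma})=:L_n$.

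For part (1), the mean value theorem in $s$ (valid since $f_n(x,\cdot)\in C^1$) gives $|f_n(x,u)-f_n(x,v)|\le L_n|u-v|$ pointwise. Since $f_n(x,0)=0$, the Nemytskii operator maps $H$ into $H$ and is globally Lipschitz with constant $L_n$. Measurability of $x\mapsto f_n(x,u(x))$ follows because $f_n$ is Carathéodory ($p$ is measurable).

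For part (2), the mean value theorem and the uniform bound give, with $a:=p_+-2+\sigma$,
\[
|f_n(x,u)-f_n(x,v)|\le C_0\bigl(1+|u|^{a}+|v|^{a}\bigr)|u-v|.
\]
We raise this to the power $m_-'$ and integrate, applying Hölder's inequality with exponents $\frac{a+1}{a}$ and $a+1$ to the factors $(1+|u|+|v|)^{am_-'}$ and $|u-v|^{m_-'}$. Since $(a+1)m_-'=q_\sigma$, this gives
\[
\|f_n(u)-f_n(v)\|_{m_-'}\le C\bigl(|\Omega|^{1/q_\sigma}+\|u\|_{q_\sigma}+\|v\|_{q_\sigma}\bigr)^{a}\|u-v\|_{q_\sigma},
\]
and the constant does not depend on $n$. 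For the target $L^{m'(x)}$, note that $m'(x)\le m_-'$ because $m(x)\ge m_-$. On a bounded domain, $L^{m_-'}(\Omega)\hookrightarrow L^{m'(x)}(\Omega)$ continuously; this follows from \eqref{eq2} and the pointwise bound $|w|^{m'(x)}\le1+|w|^{m_-'}$, or from \eqref{eq3}, and the embedding constant depends only on $|\Omega|$ and the exponents. Hence local Lipschitz continuity into $L^{m'(x)}$ follows from the $L^{m_-'}$ estimate, again with constants independent of $n$. The choice of $\sigma$ in Lemma~\ref{lem:growth} ensures $q_\sigma<2^*$, so these are the estimates later used with $V\hookrightarrow L^{q_\sigma}$, although that fact is not needed here.

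The only delicate point is the uniformity in $n$ of the derivative bound, specifically the term $\tfrac1n\vartheta'(s/n)f$. This is handled by the support observation $|s|\sim n$, which converts the factor $1/n$ into $1/|s|$ and so lowers the growth power by exactly one.
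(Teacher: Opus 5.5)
Your proof is correct and follows essentially the same route as the paper. Both arguments split $\partial_s f_n$ and use the support observation to turn $1/n$ into a multiple of $1/|s|$, then apply the mean value bound, the H\"older split based on $(a+1)m_-'=q_\sigma$, and the inclusion $L^{m_-'}(\Omega)\hookrightarrow L^{m'(x)}(\Omega)$. One trivial slip: on $n\le|s|\le 2n$ one has $1/n\le 2/|s|$, not $1/n\le 1/|s|$, so the bound should read $\tfrac1n|\vartheta'(s/n)|\le 2\|\vartheta'\|_\infty/|s|$; this only changes the constant.
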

\begin{proof}

Clearly, 
\[
\partial_s f_n(x,s)
=
\vartheta(s/n)\partial_s f(x,s)
+\frac1n\vartheta'(s/n)f(x,s).
\]
Lemma~\ref{lem:growth} gives
\[
|\vartheta(s/n)\partial_s f(x,s)|
\le C(1+|s|^{p_+-2+\sigma}).\]
Since $\vartheta'(s/n)$ is supported on $n\le|s|\le2n$, where $|s|\ge1$ and
$1/n\le2/|s|$, by \eqref{eq22} we derive 
\[\left|
\frac1n\vartheta'(s/n)f(x,s)
\right|
\le
\frac{2\|\vartheta'\|_\infty}{|s|}
|s|^{p(x)-1}\log|s|
\le
\frac{2\|\vartheta'\|_\infty}{e\sigma}
|s|^{p(x)-2+\sigma}\le
\frac{2\|\vartheta'\|_\infty}{e\sigma}
|s|^{p_+-2+\sigma}.
\]
Combining the above two estimates arrives at 
\begin{equation}
\label{eq23}
|\partial_s f_n(x,s)|
\le C
\bigl(1+|s|^{p_+-2+\sigma}\bigr).
\end{equation}

(1) For fixed $n$, the derivative vanishes for $|s|>2n$.
Thus
\[
\sup_{s\in\mathbb R}|\partial_s f_n(x,s)|
\le
C\bigl(1+(2n)^{p_+-2+\sigma}\bigr)
=:C_n<\infty.
\]
Therefore the fundamental theorem of calculus  implies
\[
|f_n(x,a)-f_n(x,b)|\le C_n|a-b|.
\]
Since $f_n(x,0)=0$, we also have $|f_n(x,a)|\le C_n|a|$.
Hence $f_n$ maps $H$ into $H$.

(2)  To derive an estimate uniform in $n$, we apply \eqref{eq23}, then 
\begin{align*}
|f_n(x,a)-f_n(x,b)|
&=\left|(a-b)\int_0^1 \partial_s f_n(x,b+t(a-b))\,dt\right|\\
&\le C|a-b|\int_0^1 \big(1+|b+t(a-b)|^{p_+-2+\sigma}\big)\,dt\\
&\le C|a-b|\big(1+|a|^{p_+-2+\sigma}+|b|^{p_+-2+\sigma}\big),
\end{align*}
where we used $|b+t(a-b)|=|(1-t)b+ta|\le\max\{|a|,|b|\}$ for $0\le t\le1$ and $p_+-2+\sigma>0$.

By the definition of $q_\sigma$,
\[
\frac1{m_-'}
=
\frac1{q_\sigma}
+\frac{p_+-2+\sigma}{q_\sigma}.
\]
H\"older's inequality implies 
\begin{align*}
\bigl\|
|u-v|\,|u|^{p_+-2+\sigma}
\bigr\|_{m_-'}
\le
\|u-v\|_{q_\sigma}
\bigl\||u|^{p_+-2+\sigma}\bigr\|_
{q_\sigma/(p_+-2+\sigma)}=
\|u-v\|_{q_\sigma}
\|u\|_{q_\sigma}^{p_+-2+\sigma}.
\end{align*}
The same estimate holds with $u$ replaced by $v$.
Furthermore, since $q_\sigma>m_-'$ and $|\Omega|<\infty$,
\[
\|u-v\|_{m_-'}
\le
|\Omega|^{1/m_-'-1/q_\sigma}
\|u-v\|_{q_\sigma}.
\]
Combining these estimates yields
\begin{align*}
\|f_n(\cdot,u)-f_n(\cdot,v)\|_{m_-'}
\le
C
\left(
|\Omega|^{1/m_-'-1/q_\sigma}
+\|u\|_{q_\sigma}^{p_+-2+\sigma}
+\|v\|_{q_\sigma}^{p_+-2+\sigma}
\right)
\|u-v\|_{q_\sigma}.
\end{align*}
If $\|u\|_{q_\sigma}+\|v\|_{q_\sigma}\le L$, then
\begin{equation*}
\|f_n(\cdot,u)-f_n(\cdot,v)\|_{m_-'}
\le C
\left(
|\Omega|^{1/m_-'-1/q_\sigma}
+2L^{p_+-2+\sigma}
\right)\|u-v\|_{q_\sigma}.
\end{equation*}

The continuous inclusion
$L^{m_-'}(\Omega)\hookrightarrow L^{m'(x)}(\Omega)$  directly indicates that $f_n:L^{q_\sigma}(\Omega)\to L^{m'(x)}(\Omega)$ 
is locally Lipschitz continuous. 
\end{proof}

Next, we aim to prove Theorem~\ref{thm:local}. The argument has three stages: we formulate
the memory equation as an evolution governed by a maximal monotone
operator, construct solutions for the truncated sources, and obtain
uniform estimates that allow the truncation to be removed. The energy identity is then established for every energy solution.
Together with the local existence estimates and control of the
memory history, it yields the continuation criterion.

\subsection{The history formulation}
Extend $u$ by zero for negative times. Following Dafermos
\cite{Dafermos1970}, introduce the history variable
\begin{equation}
\label{1403}
\eta^t(s)=
\begin{cases}
u(t)-u(t-s),&0<s\le t,\\
u(t),&s>t.
\end{cases}
\end{equation}

For notational convenience, we suppress the superscript $t$
and write $\eta(s)$ for $\eta^t(s)$ whenever the time is clear
from the context. Then
\[
\eta_t+\eta_s=u_t,\quad
\eta^t(0)=0,\quad
\eta^0(s)=u_0\quad(s>0).
\]
The associated spaces and phase norm are defined as 
\begin{gather*}
\mathcal M=L_g^2(\mathbb R_+;V),\quad
\HH=V\times H\times\mathcal M,\\
\|(u,v,\eta)\|_{\HH}^2
=\ell\|\nabla u\|_2^2+\|v\|_2^2
+\int_0^\infty g(s)\|\nabla\eta(s)\|_2^2\,ds.
\end{gather*}
For the initial phase $U_0=(u_0,u_1,\eta^0)$,
\[
\|U_0\|_{\HH}^2
=\left(\ell+\int_0^\infty g(s)ds\right)\|\nabla u_0\|_2^2
+\|u_1\|_2^2=\|\nabla u_0\|_2^2+\|u_1\|_2^2.
\]

Set $A_0=-\Delta:V\to V^*$. Splitting the history integral at $s=t$ results in 
\begin{align*}
\ell A_0u(t)+\int_0^\infty g(s)A_0\eta^t(s)ds
&=\left(\ell+\int_0^\infty g(s)ds\right)A_0u(t)
-\int_0^t g(s)A_0u(t-s)ds\\
&=A_0u(t)-\int_0^t g(t-s)A_0u(s)ds.
\end{align*}
Thus the original equation is equivalent to 
\begin{equation*}
\begin{cases}
u_t=v,& \\
v_t +\ell A_0u+\int_0^\infty g(s)A_0\eta(s)ds+b(x,v)=f(x,u),&\\
\eta_t+\eta_s=v.&
\end{cases}
\end{equation*}

\subsection{The monotone operator}
Define
\[
\Op(u,v,\eta)=
\left(-v,\ \ell A_0u+\int_0^\infty g(s)A_0\eta(s)ds+b(x,v),\ \eta_s-v\right),
\]
with domain 
\[
\begin{aligned}
D(\Op)=\biggl\{(u,v,\eta):\;&u\in V,\quad v\in X_m,\quad
\eta,\eta_s\in\mathcal M,\quad\eta(0)=0,\\
&\ell A_0u+\int_0^\infty g(s)A_0\eta(s)\,ds+b(x,v)\in H\biggr\}.
\end{aligned}
\]
The domain $D(\Op)$ is dense in $\HH$.
For two elements belonging to $D(\Op)$, denote their difference by
$(\widehat u,\widehat v,\widehat\eta)$. By expanding the phase-space inner product, set $U:=(u,v,\eta)$, we arrive at
\begin{align*}
(\Op U_1-\Op U_2,U_1-U_2)_{\HH}
={}&-\ell(\nabla\widehat v,\nabla\widehat u)
+\ell(\nabla\widehat u,\nabla\widehat v)\\
&+\int_0^\infty g(s)(\nabla\widehat\eta,\nabla\widehat v)ds
-\int_0^\infty g(s)(\nabla\widehat v,\nabla\widehat\eta)ds\\
&+\int_\Omega\big[b(x,v_1)-b(x,v_2)\big]\widehat v\,dx
+\int_0^\infty g(s)(\nabla\widehat\eta_s,\nabla\widehat\eta)ds.
\end{align*}
Since $g(s)\ge g(R)>0$ on $[0,R]$, the domain conditions give
$\widehat\eta\in W^{1,2}(0,R;V)$. Hence
$g(s)\|\nabla\widehat\eta(s)\|_2^2$ is absolutely continuous on $[0,R]$.
Using $\widehat\eta(0)=0$, integration by parts gives
\[
\begin{aligned}
\int_0^R g(s)
\bigl(\nabla\widehat\eta_s(s),\nabla\widehat\eta(s)\bigr)\,ds
=\frac12g(R)\|\nabla\widehat\eta(R)\|_2^2+\frac12\int_0^R[-g'(s)]
\|\nabla\widehat\eta(s)\|_2^2\,ds.
\end{aligned}
\]
Because $\widehat\eta,\widehat\eta_s\in\mathcal M$, one has 
\[
\int_0^\infty g(s)\|\nabla\widehat\eta(s)\|_2^2\,ds
=\|\widehat\eta\|_{\mathcal M}^2<\infty,\quad
\int_0^\infty g(s)
\left|\bigl(\nabla\widehat\eta_s(s),
\nabla\widehat\eta(s)\bigr)\right|\,ds
\le
\|\widehat\eta_s\|_{\mathcal M}
\|\widehat\eta\|_{\mathcal M}<\infty.
\]
The first bound provides an increasing sequence $R_j\to\infty$
such that
$
g(R_j)\|\nabla\widehat\eta(R_j)\|_2^2\to0.
$
Passing to the limit along this sequence, using the second bound
and monotone convergence, yields
\begin{equation*}
\int_0^\infty g(s)
\bigl(\nabla\widehat\eta_s(s),\nabla\widehat\eta(s)\bigr)\,ds
=
\frac12\int_0^\infty[-g'(s)]
\|\nabla\widehat\eta(s)\|_2^2\,ds
\ge0.
\end{equation*}
Moreover, the monotonicity of $s\mapsto |s|^{m(x)-2}s$ implies
\[
\int_\Omega
\bigl[b(x,v_1)-b(x,v_2)\bigr](v_1-v_2)\,dx\ge0.
\]
Consequently,
\[
\begin{aligned}
(\Op U_1-\Op U_2,U_1-U_2)_{\HH}
&=\int_\Omega
\bigl[b(x,v_1)-b(x,v_2)\bigr](v_1-v_2)\,dx\\
&\quad+\frac12\int_0^\infty[-g'(s)]
\|\nabla\widehat\eta(s)\|_2^2\,ds
\ge0.
\end{aligned}
\]
Thus $\Op$ is monotone.

\subsection{The resolvent equation}
Fix $\lambda>0$ and $h=(h_1,h_2,h_3)\in\HH$.
Writing $U=(u,v,\eta)$, the resolvent equation
$U+\lambda\Op U=h$ takes the componentwise form
\begin{equation}
\label{eq24}
\begin{cases}
u-\lambda v=h_1,\\[1mm]
v+\lambda\ell A_0u
+\lambda\displaystyle\int_0^\infty g(s)A_0\eta(s)\,ds
+\lambda b(x,v)=h_2,\\[2mm]
\eta(s)+\lambda\eta_s(s)-\lambda v=h_3(s),
\quad s>0,\\[1mm]
\eta(0)=0.
\end{cases}
\end{equation}
It follows from $\eqref{eq24}_1$, $\eqref{eq24}_3$ with $\eta(0)=0$ that 
\begin{equation}
\label{eq25}
u=h_1+\lambda v,\quad
\eta(s)=\lambda(1-e^{-s/\lambda})v
+\frac1\lambda\int_0^s
e^{-(s-r)/\lambda}h_3(r)\,dr.
\end{equation}
Let us define
\[
(R_\lambda h_3)(s)
:=\frac1\lambda\int_0^s
e^{-(s-r)/\lambda}h_3(r)\,dr,
\quad s>0.
\]
We are now in a position to show that $R_\lambda h_3\in\mathcal M$ and
$
\|R_\lambda h_3\|_{\mathcal M}
\le \|h_3\|_{\mathcal M}.
$
First, since $g$ is positive and nonincreasing, for every $s>0$,
\begin{equation}
\label{eq26}
\int_0^s\|\nabla h_3(r)\|_2^2\,dr
\le \frac1{g(s)}
\int_0^s g(r)\|\nabla h_3(r)\|_2^2\,dr
\le \frac1{g(s)}\|h_3\|_{\mathcal M}^2.
\end{equation}
Consequently, $h_3\in L^2(0,s;V)$ for every finite $s>0$,
so $R_\lambda h_3$ is a well-defined Bochner integral in $V$. By
the Cauchy--Schwarz inequality, we obtain 
\begin{align*}
\|\nabla(R_\lambda h_3)(s)\|_2^2
&\le
\left(
\int_0^s\frac1\lambda e^{-(s-r)/\lambda}
\|\nabla h_3(r)\|_2\,dr
\right)^2\\
&\le
\left(
\int_0^s\frac1\lambda e^{-(s-r)/\lambda}\,dr
\right)
\left(
\int_0^s\frac1\lambda e^{-(s-r)/\lambda}
\|\nabla h_3(r)\|_2^2\,dr
\right)\\
&\le
\int_0^s\frac1\lambda e^{-(s-r)/\lambda}
\|\nabla h_3(r)\|_2^2\,dr.
\end{align*}
Multiplying the above inequality by $g(s)$ and integrating over $(0,\infty)$,
then applying Tonelli's theorem and using
$g(s)\le g(r)$ for $s\ge r$, we obtain 
\begin{equation}
\label{eq27}
\begin{split}
\|R_\lambda h_3\|_{\mathcal M}^2
&\le\int_0^\infty\int_0^s
g(s)\frac1\lambda e^{-(s-r)/\lambda}
\|\nabla h_3(r)\|_2^2\,dr\,ds\\
&=\int_0^\infty
\|\nabla h_3(r)\|_2^2
\left(
\int_r^\infty
g(s)\frac1\lambda e^{-(s-r)/\lambda}\,ds
\right)dr\\
&\le\int_0^\infty
g(r)\|\nabla h_3(r)\|_2^2
\left(
\int_r^\infty
\frac1\lambda e^{-(s-r)/\lambda}\,ds
\right)dr\\
&=\int_0^\infty
g(r)\|\nabla h_3(r)\|_2^2\,dr=\|h_3\|_{\mathcal M}^2.
\end{split}
\end{equation}
Thus $R_\lambda$ is a bounded linear operator on
$\mathcal M$.

Substituting \eqref{eq25} into $\eqref{eq24}_2$, we obtain
\begin{align*}
v+\lambda b(x,v)
+\lambda^2
\left[
\ell+\int_0^\infty g(s)(1-e^{-s/\lambda})\,ds
\right]A_0v=h_2-\lambda\ell A_0h_1
-\lambda\int_0^\infty g(s)A_0R_\lambda h_3(s)\,ds.
\end{align*}
Equivalently, we seek $v\in X_m$ such that, for every
$z\in X_m$,
\begin{equation}
\label{eq28}
\begin{split}
&\int_\Omega vz\,dx+\lambda\int_\Omega b(x,v)z\,dx+\lambda^2
\left[\ell+\int_0^\infty g(s)(1-e^{-s/\lambda})\,ds\right]
\int_\Omega\nabla v\cdot\nabla z\,dx\\
&\quad=\int_\Omega h_2z\,dx
-\lambda\ell\int_\Omega\nabla h_1\cdot\nabla z\,dx-\lambda\int_0^\infty g(s)
\int_\Omega\nabla(R_\lambda h_3)(s)\cdot\nabla z\,dx\,ds.
\end{split}
\end{equation}
Let us first verify that the right-hand side defines a bounded
linear functional on $X_m$. By Cauchy--Schwarz and Poincar\'e's inequality,
\begin{align*}
\left|\int_\Omega h_2z\,dx\right|
\le\|h_2\|_2\|z\|_2
\le C_p\|h_2\|_2\|\nabla z\|_2,\quad
\lambda\ell\left|\int_\Omega\nabla h_1\cdot\nabla z\,dx\right|\le\lambda\ell\|\nabla h_1\|_2\|\nabla z\|_2.
\end{align*}
For the history term, the Cauchy--Schwarz inequality
and \eqref{eq27} give 
\begin{align*}
\lambda\left|
\int_0^\infty g(s)
\int_\Omega
\nabla R_\lambda h_3(s)\cdot\nabla z\,dx\,ds
\right|
&\le
\lambda\|\nabla z\|_2
\left(\int_0^\infty g(s)\,ds\right)^{1/2}
\|R_\lambda h_3\|_{\mathcal M}\\
&\le
\lambda(1-\ell)^{1/2}
\|h_3\|_{\mathcal M}\|\nabla z\|_2.
\end{align*}
Thus, the right-hand side of \eqref{eq28} is bounded in absolute value by
\[
\left[
C_p\|h_2\|_2
+\lambda\ell\|\nabla h_1\|_2
+\lambda(1-\ell)^{1/2}\|h_3\|_{\mathcal M}
\right]\|z\|_{X_m}.
\]

Define $T_\lambda:X_m\to X_m^*$ by
\begin{align*}
\langle T_\lambda v,z\rangle
:=\int_\Omega vz\,dx
+\lambda\int_\Omega b(x,v)z\,dx+\lambda^2
\left[
\ell+\int_0^\infty g(s)(1-e^{-s/\lambda})\,ds
\right]
\int_\Omega\nabla v\cdot\nabla z\,dx.
\end{align*}
\textup{(1) Boundedness on bounded sets.}
For $v\in X_m$, the Luxemburg norm estimate gives
\[
\|b(x,v)\|_{m'(x)}
\le
\max\left\{
\|v\|_{m(x)}^{m_--1},
\|v\|_{m(x)}^{m_+-1}
\right\}.
\]
Indeed, for $a=\|v\|_{m(x)}>0$, take
$M=\max\{a^{m_--1},a^{m_+-1}\}$. Then
$M\ge a^{m(x)-1}$ and
\[
\int_\Omega\left|\frac{b(x,v)}{M}\right|^{m'(x)}\,dx
\le\int_\Omega\left|\frac{v}{a}\right|^{m(x)}\,dx\le1.
\]
The case $v=0$ is immediate.

By Poincar\'e's inequality, H\"older's inequality \eqref{eq3},
and $0<\ell+\int_0^\infty g(s)(1-e^{-s/\lambda})\,ds\le1$,
\[
\begin{aligned}
|\langle T_\lambda v,z\rangle|
&\le\left[(C_p^2+\lambda^2)\|\nabla v\|_2
+2\lambda\|b(x,v)\|_{m'(x)}\right]\|z\|_{X_m}\\
&\le\left[(C_p^2+\lambda^2)\|v\|_{X_m}
+2\lambda\max\left\{
\|v\|_{X_m}^{m_--1},\|v\|_{X_m}^{m_+-1}
\right\}\right]\|z\|_{X_m}.
\end{aligned}
\]
Taking the supremum over $\|z\|_{X_m}\le1$ shows that
$T_\lambda$ maps bounded subsets of $X_m$ into bounded
subsets of $X_m^*$.

\textup{(2) Hemicontinuity.}
Fix $v,w,z\in X_m$ and let $t_j\to t$ in $\mathbb R$.
The linear terms in $\langle T_\lambda(v+t_jw),z\rangle$
converge to their values at $t$. For the nonlinear term,
continuity of $s\mapsto |s|^{m(x)-2}s$ results in
\[
b(x,v+t_jw)z\to b(x,v+tw)z
\quad\text{for a.e. }x\in\Omega.
\]
Since $(t_j)$ is bounded, choose $k>0$ such that
$|t_j|\le k$ for all $j$. Young's inequality \eqref{eq4}
and $(a+b)^{m(x)}\le2^{m(x)-1}(a^{m(x)}+b^{m(x)})$ imply
\[
|b(x,v+t_jw)z|
\le (|v|+k|w|)^{m(x)-1}|z|
\le C\bigl(|v|^{m(x)}+|w|^{m(x)}+|z|^{m(x)}\bigr).
\]
The right-hand side is integrable on $\Omega$. Thus the
dominated convergence theorem shows convergence of the
nonlinear term, and hence
\[
\langle T_\lambda(v+t_jw),z\rangle
\to
\langle T_\lambda(v+tw),z\rangle.
\]
Therefore, $T_\lambda$ is hemicontinuous.

\textup{(3) Strict monotonicity.}
For $v_1,v_2\in X_m$ with $v_1\ne v_2$, we have
\begin{align*}
\langle T_\lambda v_1-T_\lambda v_2,v_1-v_2\rangle&=\|v_1-v_2\|_2^2+\lambda\int_\Omega
\bigl[b(x,v_1)-b(x,v_2)\bigr](v_1-v_2)\,dx\\
&\quad+\lambda^2
\left[
\ell+\int_0^\infty g(s)(1-e^{-s/\lambda})\,ds
\right]
\|\nabla(v_1-v_2)\|_2^2>0.
\end{align*}
Here we used that $s\mapsto |s|^{m(x)-2}s$ is increasing for almost every $x$. Thus $T_\lambda$ is strictly monotone.

\textup{(4) Coercivity.}
By the definition of $T_\lambda$,
\[
\langle T_\lambda v,v\rangle
\ge
\lambda\int_\Omega |v|^{m(x)}\,dx
+\lambda^2\ell\|\nabla v\|_2^2.
\]
We consider two cases.

If $\|\nabla v\|_2\ge\|v\|_{m(x)}$, then
$\|v\|_{X_m}\le2\|\nabla v\|_2$, and hence
\[
\frac{\langle T_\lambda v,v\rangle}{\|v\|_{X_m}}
\ge\frac{\lambda^2\ell}{2}\|\nabla v\|_2
\ge\frac{\lambda^2\ell}{4}\|v\|_{X_m}.
\]

If $\|v\|_{m(x)}>\|\nabla v\|_2$, then
$\|v\|_{X_m}<2\|v\|_{m(x)}$. In particular,
$\|v\|_{m(x)}>1$ whenever $\|v\|_{X_m}>2$. By \eqref{eq2},
\[
\frac{\langle T_\lambda v,v\rangle}{\|v\|_{X_m}}
\ge
\frac{\lambda}{2\|v\|_{m(x)}}
\int_\Omega |v|^{m(x)}\,dx
\ge
\frac{\lambda}{2}\|v\|_{m(x)}^{m_--1}
\ge
\frac{\lambda}{2^{m_-}}\|v\|_{X_m}^{m_--1}.
\]
Thus, in either case,
\[
\frac{\langle T_\lambda v,v\rangle}{\|v\|_{X_m}}
\to+\infty
\quad\text{as }\|v\|_{X_m}\to\infty,
\]
so $T_\lambda$ is coercive.

Since $X_m$ is reflexive and $T_\lambda:X_m\to X_m^*$
is bounded on bounded sets, hemicontinuous, monotone,
and coercive, the Browder--Minty surjectivity theorem \cite{Barbu} implies that
$T_\lambda$ is surjective.  Therefore \eqref{eq28} has a solution
$v\in X_m$. Strict monotonicity also makes this solution
unique.

It remains to verify that the reconstructed triple belongs
to $D(\Op)$. Recall \eqref{eq25} together with $R_\lambda h_3\in\mathcal M$, clearly $u\in V$, $v\in X_m$, and  $\eta\in\mathcal M$.

For every finite $S>0$, recall \eqref{eq26}, we have 
$R_\lambda h_3$ belongs to $W^{1,2}(0,S;V)$
and satisfies
\[
\frac{d}{ds}R_\lambda h_3(s)
=\lambda^{-1}h_3(s)-\lambda^{-1}R_\lambda h_3(s)
\]
for almost every $s\in(0,S)$. Therefore, from \eqref{eq25}, we have 
\begin{align*}
\eta_s(s)
=e^{-s/\lambda}v
+\lambda^{-1}h_3(s)-\lambda^{-1}R_\lambda h_3(s)=\lambda^{-1}\bigl(h_3(s)-\eta(s)\bigr)+v.
\end{align*}
Since $h_3,\eta\in\mathcal M$ and
\[
\int_0^\infty g(s)\|\nabla v\|_2^2\,ds
=(1-\ell)\|\nabla v\|_2^2<\infty.
\]
it follows that $\eta_s\in\mathcal M$.

As $s\to 0$, we have
\begin{align*}
\|\lambda(1-e^{-s/\lambda})v\|_V+\|R_\lambda h_3(s)\|_V
&\le \lambda(1-e^{-s/\lambda})\|v\|_V+
\lambda^{-1}\int_0^s\|h_3(r)\|_V\,dr\\
&\le\lambda(1-e^{-s/\lambda})\|v\|_V+\frac{\sqrt{s}}{\lambda}
\left(\int_0^s\|h_3(r)\|_V^2\,dr\right)^{1/2}
\to 0.
\end{align*}
Thus $\eta(0)=0$ in $V$.

Finally, \eqref{eq28} gives
\[
\ell A_0u+\int_0^\infty g(s)A_0\eta(s)\,ds+b(x,v)
=\frac{h_2-v}{\lambda}\in H.
\]
We have therefore verified all the
conditions defining $D(\Op)$, and the reconstructed triple
satisfies $U+\lambda\Op U=h$. Since $h\in\HH$ was arbitrary,
\[
\operatorname{Range}(I+\lambda\Op)=\HH,\quad \lambda>0.
\]
Together with the monotonicity established above, one shows that $\Op$ is maximal monotone on $\HH$.

\subsection{Truncated solutions and their energy identities}

For each fixed $n$, Lemma~\ref{lem:cutoff}(1) and Poincar\'e's
inequality give, for all $U,\widetilde U\in\HH$,
\[
\begin{aligned}
\|(0,f_n(x,u)-f_n(x,\widetilde u),0)\|_{\HH}
\le C_n\|u-\widetilde u\|_2
\le C_nC_p\|\nabla(u-\widetilde u)\|_2
\le\frac{C_nC_p}{\sqrt{\ell}}
\|U-\widetilde U\|_{\HH}.
\end{aligned}
\]
Thus $U\mapsto(0,f_n(x,u),0)$ is globally Lipschitz on $\HH$.
Since $\Op$ is maximal monotone and
$\overline{D(\Op)}^{\,\HH}=\HH$, the Lipschitz perturbation
theorem \cite{Brezis,Zheng2004}
yields, for every $U_0\in\HH$, a unique global \emph{mild} solution
$U\in C([0,\infty);\HH)$ of
\[
U_t+\Op U=(0,f_n(x,u),0),
\quad U(0)=U_0.
\]

Choose a sequence $U_{0,j}\in D(\Op)$ such that
$U_{0,j}\to U_0$ in $\HH$, and let
$U_j=(u_j,v_j,\eta_j)$ be the corresponding strong
solutions of
\[
U_j'+\Op U_j=(0,f_n(x,u_j),0),
\quad U_j(0)=U_{0,j}.
\]
For any $j,k$, subtracting their equations and pairing
with $U_j-U_k$ yields
\begin{equation}\label{eq29}
\begin{aligned}
&\frac12\frac d{dt}\|U_j-U_k\|_{\HH}^2
+\int_\Omega
[b(x,v_j)-b(x,v_k)](v_j-v_k)\,dx\\
&\quad+\frac12\int_0^\infty[-g'(s)]
\|\nabla(\eta_j-\eta_k)(s)\|_2^2\,ds\\
&=(f_n(x,u_j)-f_n(x,u_k),v_j-v_k)\\
&\le C_n\|u_j-u_k\|_2\|v_j-v_k\|_2
\le C_n\|U_j-U_k\|_{\HH}^2
\end{aligned}
\end{equation}
for almost every $t>0$.
Discarding the nonnegative dissipation terms and
applying Gronwall's inequality, we obtain, for every
$T>0$,
\[
\sup_{0\le t\le T}
\|U_j(t)-U_k(t)\|_{\HH}^2
\le e^{2C_nT}\|U_{0,j}-U_{0,k}\|_{\HH}^2.
\]
Since $\{U_{0,j}\}$ converges in $\HH$, the sequence
$\{U_j\}$ is Cauchy in $C([0,T];\HH)$. Therefore, there
exists $U\in C([0,T];\HH)$ such that as \(j\to\infty\)
\begin{equation}
\label{1150}
U_j\to U\quad\text{strongly in }C([0,T];\HH).
\end{equation}

Using
\begin{equation}
\label{eq30}
[b(x,a)-b(x,c)](a-c)
\ge 2^{2-m_+}|a-c|^{m(x)},
\end{equation}
integrating  \eqref{eq29} from $0$ to $T$ with respect to $t$ and using the preceding
$C([0,T];\HH)$ estimate, we obtain
\begin{equation}
\label{1147}
\begin{split}
&2^{2-m_+}\int_0^T\int_\Omega
|v_j-v_k|^{m(x)}\,dx\,dt+\frac12\int_0^T\int_0^\infty[-g'(s)]
\|\nabla(\eta_j-\eta_k)(t,s)\|_2^2\,ds\,dt\\
&\le
\left(\frac12+C_nTe^{2C_nT}\right)
\|U_{0,j}-U_{0,k}\|_{\HH}^2
\to0 \quad\text{as }j,k\to\infty.
\end{split}
\end{equation}
Thus, as \(j\to\infty\), 
\begin{equation}
\label{1200}
v_j\to v\quad\text{strongly in }L^{m(x)}(Q_T).
\end{equation}
The dissipation estimate \eqref{1147} shows that $\nabla\eta_j$ is Cauchy in
$L^2((0,T)\times(0,\infty)\times\Omega;
[-g'(s)]\,dt\,ds\,dx)$. Denote its limit in this space by $Z$.
On the other hand, \eqref{1150} gives
\[
\int_0^T\int_0^\infty g(s)
\|\nabla(\eta_j-\eta)(t,s)\|_2^2\,ds\,dt\to0.
\]
A subsequence therefore converges to $\nabla\eta$ almost everywhere
with respect to $g(s)\,dt\,ds\,dx$, and hence with respect to Lebesgue
measure, since $g>0$. A further subsequence converges to $Z$ almost
everywhere with respect to $[-g'(s)]\,dt\,ds\,dx$. This latter measure
is absolutely continuous with respect to Lebesgue measure. Consequently,
$Z=\nabla\eta$ almost everywhere, and
convergence of the original Cauchy sequence yields
\[
\int_0^T\int_0^\infty[-g'(s)]
\|\nabla(\eta_j-\eta)(t,s)\|_2^2\,ds\,dt
\to0 \quad\text{as }j\to\infty.
\]
Moreover, Lemma~\ref{lem:cutoff}(1), \(f_n(0)=0\) and \eqref{1150} produce 
\begin{align*}
&\left|
\int_0^t
\bigl[(f_n(x,u_j),v_j)-(f_n(x,u),v)\bigr]\,d\tau
\right|\\
&\quad\le
C_nT\sup_{[0,T]}\|u_j-u\|_2
\sup_{[0,T]}\|v_j\|_2+
T\sup_{[0,T]}\|f_n(x,u)\|_2
\sup_{[0,T]}\|v_j-v\|_2
\to0 \quad\text{as }j\to\infty.
\end{align*}
The preceding strong convergences and the global Lipschitz
continuity of $f_n:H\to H$ allow passage to the limit in
the strong energy identity, yielding
 the energy identity
\begin{equation}
\label{eq31}
\begin{split}
&\frac12\|U(t)\|_{\HH}^2
+\int_0^t\int_\Omega |v|^{m(x)}\,dx\,d\tau+\frac12\int_0^t\int_0^\infty[-g'(s)]
\|\nabla\eta(\tau,s)\|_2^2\,ds\,d\tau\\
&\quad=\frac12\|U_0\|_{\HH}^2
+\int_0^t(f_n(x,u),v)\,d\tau.
\end{split}
\end{equation}

Let us pause here to verify the continuity of
$b:L^{m(x)}(Q_T)\to L^{m'(x)}(Q_T)$.
Recall $v_j\to v$ in $L^{m(x)}(Q_T)$ in \eqref{1200}, then
$v_j\to v$ in measure and
\[
|b(x,v_j)-b(x,v)|^{m'(x)}
\le C\bigl(|v_j-v|^{m(x)}+|v|^{m(x)}\bigr).
\]
The right-hand side implies uniform integrability. Since
$b(x,v_j)\to b(x,v)$ in measure, Vitali's theorem and
\eqref{eq2} yield
\[
b(x,v_j)\to b(x,v)
\quad\text{strongly in }L^{m'(x)}(Q_T)\quad\text{as }j\to\infty.\]

Fix $n,k\in\mathbb N$, and let $U_n$ and $U_k$ be the
mild solutions corresponding to the truncated sources
$f_n$ and $f_k$, with initial data
$U_{0,n},U_{0,k}\in\HH$, respectively. Choose
$
U_{0,n,j},U_{0,k,j}\in D(\Op),$ $
U_{0,n,j}\to U_{0,n},~
U_{0,k,j}\to U_{0,k} \text{ in }\HH$ as \(j\to\infty\). Let $U_{n,j}$ and $U_{k,j}$ be the corresponding
strong solutions. Applying the same approximation argument to two
fixed truncated problems, and using the continuity
of $b:L^{m(x)}(Q_T)\to L^{m'(x)}(Q_T)$, we obtain
the following difference energy identity for their
mild solutions\begin{equation}
\label{eq32}
\begin{split}
&\frac12\|U_n(t)-U_k(t)\|_{\HH}^2+\int_0^t\int_\Omega
[b(x,v_n)-b(x,v_k)](v_n-v_k)\,dx\,d\tau\\
&\qquad+\frac12\int_0^t\int_0^\infty[-g'(s)]
\|\nabla(\eta_n-\eta_k)(\tau,s)\|_2^2\,ds\,d\tau\\
&\quad=\frac12\|U_n(0)-U_k(0)\|_{\HH}^2+\int_0^t
(f_n(x,u_n)-f_k(x,u_k),v_n-v_k)\,d\tau.
\end{split}
\end{equation}
Here only $\eta^0\in\mathcal M$ is required. 
In particular, if $U_{0,n}=U_{0,k}=U_0$, the initial
difference term vanishes.

\subsection{A common existence interval}
Let $U_n=(u_n,v_n,\eta_n)$ be the solution of
\[
U_n'+\Op U_n=(0,f_n(x,u_n),0),\quad U_n(0)=U_0.
\]
The energy identity \eqref{eq31}, Young's inequality \eqref{eq4}, and
Lemma~\ref{lem:growth} yield
\begin{equation}
\label{eq33}
\begin{split}
&\frac12\frac d{dt}\|U_n\|_{\HH}^2
+\int_\Omega|v_n|^{m(x)}dx
+\frac12\int_0^\infty[-g'(s)]\|\nabla\eta_n(s)\|_2^2ds=\int_\Omega f_n(x,u_n)v_n\,dx\\
&\qquad\le\frac12\int_\Omega|v_n|^{m(x)}dx
+C\int_\Omega|f_n(x,u_n)|^{m'(x)}dx\\
&\qquad\le\frac12\int_\Omega|v_n|^{m(x)}dx
+C\int_\Omega(1+|u_n|^{q_\sigma})dx.
\end{split}
\end{equation}
Here we uesd
\[
|f_n(x,s)|^{m'(x)}
\le C\bigl(1+|s|^{q_\sigma}\bigr).\]
Indeed, since $|f_n(x,s)|\le |f(x,s)|$, Lemma~\ref{lem:growth}
gives a uniform bound for $|f_n(x,s)|$ when $|s|\le1$.
For $|s|>1$, it yields
\[
|f_n(x,s)|^{m'(x)}
\le C|s|^{(p_+-1+\sigma)m'(x)}
\le C|s|^{(p_+-1+\sigma)m_-'}
=C|s|^{q_\sigma},
\]
where we used $m'(x)\le m_-'$.

 Since $q_\sigma<2^*$, Sobolev embedding $V\hookrightarrow L^{q_\sigma}(\Omega)$ and the definition
of the phase norm show
\[
\|u_n\|_{q_\sigma}^{q_\sigma}
\le C\|\nabla u_n\|_2^{q_\sigma}
\le C\|U_n\|_{\HH}^{q_\sigma}.
\]
Then \eqref{eq33} produces
\[
\frac d{dt}\left(1+\tfrac12\|U_n\|_{\HH}^2\right)
\le C\left(1+\tfrac12\|U_n\|_{\HH}^2\right)^{\max\{1,q_\sigma/2\}}.
\]
Set $Z_0=1+\frac12\|U_0\|_{\HH}^2$ and
$a=\max\{1,q_\sigma/2\}$. Comparison with
$z'=Cz^a$, $z(0)=Z_0$, results in 
\[
1+\frac12\|U_n(t)\|_{\HH}^2\le z(t),
\quad
z(t)=
\begin{cases}
\bigl[Z_0^{1-a}-C(a-1)t\bigr]^{-1/(a-1)},&a>1,\\
Z_0e^{Ct},&a=1.
\end{cases}
\]
Choose
\[
T_0=
\begin{cases}
\displaystyle
\frac{(1-2^{1-a})Z_0^{1-a}}{C(a-1)},&a>1,\\[6pt]
\displaystyle\frac{\log2}{C},&a=1.
\end{cases}
\]
Then $T_0>0$ is independent of $n$, and
\begin{equation}
\label{eq34}
\sup_{0\le t\le T_0}
\left(1+\frac12\|U_n(t)\|_{\HH}^2\right)\le2Z_0.
\end{equation}
Integrating \eqref{eq33} over $[0,T_0]$ yields
\[
\begin{aligned}
\int_0^{T_0}\int_\Omega |v_n|^{m(x)}\,dx\,dt+\int_0^{T_0}\int_0^\infty[-g'(s)]
\|\nabla\eta_n(t,s)\|_2^2\,ds\,dt\le \|U_0\|_{\HH}^2+2CT_0(2Z_0)^a.
\end{aligned}
\]

\subsection{Passage to the original source} 
It follows from \eqref{eq34} that  
\[\sup_n\|u_n\|_{L^\infty(0,T_0;V)}<\infty,
\quad
\sup_n\|v_{n}\|_{L^\infty(0,T_0;H)}<\infty.\]
Therefore,
\[\|u_n(t)-u_n(s)\|_2
\le\int_s^t\|v_n(\tau)\|_2\,d\tau
\le C|t-s|.\]
For $2<a<2^*$ choose $\theta\in(0,1)$ with
$1/a=\theta/2+(1-\theta)/2^*$. Interpolation inequality and Sobolev embedding $V\hookrightarrow L^{2^*}(\Omega)$ deliver
\[
\|u_n(t)-u_n(s)\|_a
\le\|u_n(t)-u_n(s)\|_2^\theta
\|u_n(t)-u_n(s)\|_{2^*}^{1-\theta}\le C|t-s|^\theta.
\]
For $1\le a\le2$, H\"{o}lder's inequality shows 
\[\|u_n(t)-u_n(s)\|_a
\le|\Omega|^{1/a-1/2}\|u_n(t)-u_n(s)\|_2
\le C|t-s|.\]
For each $t\in[0,T_0]$, the uniform bound in $V$ and the
compact embedding $V\hookrightarrow\hookrightarrow L^a(\Omega)$
imply that $\{u_n(t)\}$ is relatively compact in $L^a(\Omega)$
for every $1\le a<2^*$. Together with the time equicontinuity
established above, the Arzelà--Ascoli theorem and a diagonal
argument yield a subsequence, still denoted by $\{u_n\}$, such that  as $n\to\infty$
\[
u_n\to u
\quad\text{strongly in }C([0,T_0];L^a(\Omega)),
\quad 1\le a<2^*.
\]
In particular this holds for $a=q_\sigma$ and $a=p_++\sigma$.

Decompose
\[
f_n(x,u_n)-f(x,u)
=[f_n(x,u_n)-f_n(x,u)]+[f_n(x,u)-f(x,u)].
\]
Since $u_n\to u$ in $C([0,T_0];L^{q_\sigma}(\Omega))$ as $n\to\infty$, it follows from Lemma
\ref{lem:cutoff}(2) that 
\[
\begin{aligned}
\|f_n(x,u_n)-f_n(x,u)\|_{L^{m_-'}(Q_{T_0})}
&\quad=
\left(\int_0^{T_0}
\|f_n(x,u_n(t))-f_n(x,u(t))\|_{m_-'}^{m_-'}\,dt
\right)^{1/m_-'}\\
&\quad\le CT_0^{1/m_-'}
\sup_{0\le t\le T_0}\|u_n(t)-u(t)\|_{q_\sigma}
\to 0\quad\text{as }n\to\infty.
\end{aligned}
\]

By $m'(x)\le m_-'$ and $|Q_{T_0}|<\infty$, this convergence
also holds in $L^{m'(x)}(Q_{T_0})$.
Moreover, $f_n(x,u)\to f(x,u)$ almost everywhere as $n\to\infty$, and
\[
|f_n(x,u)-f(x,u)|^{m'(x)}=|\vartheta(u/n)-1|\,|f(x,u)|^{m'(x)}
\le |f(x,u)|^{m'(x)}
\le C(1+|u|^{q_\sigma})\in L^1(Q_{T_0}).
\]
Here we used Lemma \ref{lem:growth}. 
By dominated convergence theorem, as $n\to\infty$, one obtains 
\begin{equation}\label{eq35}
f_n(x,u_n)\to f(x,u)
\quad\text{strongly in }L^{m'(x)}(Q_{T_0}).
\end{equation}
Recalling \eqref{eq32} and using \eqref{eq3}, \eqref{eq35}, and \eqref{eq30}, as $n\to\infty$, we obtain 
\begin{equation*}
\begin{split}
&\frac12\|U_n(t)-U_k(t)\|_{\HH}^2+2^{2-m_+}\int_0^t\int_\Omega|v_n-v_k|^{m(x)}\,dx\,d\tau\\
&\qquad+\frac12\int_0^t\int_0^\infty[-g'(s)]
\|\nabla(\eta_n-\eta_k)(\tau,s)\|_2^2\,ds\,d\tau\\
&\quad \le \frac12\|U_n(0)-U_k(0)\|_{\HH}^2+2\|f_n(x,u_n)-f_k(x,u_k)\|_{L^{m'(x)}(Q_{T_0})}
\|v_n-v_k\|_{L^{m(x)}(Q_{T_0})}\to0.
\end{split}
\end{equation*}
Therefore, we obtain, along the extracted subsequence, as $n\to\infty$, 
\[
U_n\to U\quad\text{strongly in }C([0,T_0];\HH),
\quad
v_n\to v\quad\text{strongly in }L^{m(x)}(Q_{T_0}),
\]
and
\[
\int_0^{T_0}\int_0^\infty[-g'(s)]
\|\nabla(\eta_n-\eta)(t,s)\|_2^2\,ds\,dt
\to0.
\]
Since $v_n\to v$ strongly in $L^{m(x)}(Q_{T_0})$ as $n\to\infty$, one may extract a subsequence satisfying
\[
v_n\to v\quad\text{a.e. in }Q_{T_0}.
\]
The continuity of $b(x,\cdot)$
therefore gives $b(x,v_n)\to b(x,v)$ a.e. in $Q_{T_0}$. Since $v_n\to v$ strongly in $L^{m(x)}(Q_{T_0})$,
the continuity of
$b:L^{m(x)}(Q_{T_0})\to L^{m'(x)}(Q_{T_0})$
implies that 
\[b(x,v_n)\to b(x,v)\text{ strongly in }
L^{m'(x)}(Q_{T_0}).\]


We now pass to the limit in the weak formulation
of the truncated equation. 
For every $z\in X_m$ and $\varphi\in C_c^\infty(0,T_0)$,
the convergence $v_n\to v$ strongly in $C([0,T_0];H)$ gives
\[
-\int_0^{T_0}(v_n,z)\varphi'(t)\,dt
\to
-\int_0^{T_0}(v,z)\varphi'(t)\,dt.
\]
The elastic and memory terms converge by the strong
convergence in $C([0,T_0];\HH)$, while the source and
damping terms converge strongly in $L^{m'(x)}(Q_{T_0})$.
Moreover, passing to the limit in
\[
u_n(t)=u_0+\int_0^t v_n(\tau)\,d\tau
\]
yields $u(t)=u_0+\int_0^t v(\tau)\,d\tau$ in $H$, so
$u_t=v$. Consequently, for every $z\in X_m$, \eqref{eq6} holds
in $\mathcal D'(0,T_0)$.
The initial conditions follow from the convergence in
$C([0,T_0];\HH)$. Thus $u$ is an energy solution on
$[0,T_0]$.

\subsection{The energy identity}
We prove \eqref{eq7} for an arbitrary energy
solution in Definition~\ref{def:energy}.
By Lemma~\ref{lem:growth} and $u\in C([0,T];V)$,
\[
f(x,u)\in L^{m'(x)}(Q_T).
\]
Moreover,
\[
\int_{Q_T}|b(x,u_t)|^{m'(x)}\,dx\,dt
=\int_{Q_T}|u_t|^{m(x)}\,dx\,dt<\infty.
\]

Fix $0<s<t<T$. For $0<h<\min\{s,T-t\}$, define
\[
v_h(\tau)
=\frac{u(\tau+h)-u(\tau-h)}{2h}
=\frac1{2h}\int_{\tau-h}^{\tau+h}u_t(r)\,dr,
\quad \tau\in[s,t].
\]
For each fixed $h$, the regularity of $u$ and $u_t$
and Jensen's inequality imply
\[
v_h\in C([s,t];V)
\cap L^{m(x)}(\Omega\times(s,t)),
\quad
\partial_\tau v_h
=\frac{u_t(\tau+h)-u_t(\tau-h)}{2h}
\in C([s,t];H).
\]
The uniform continuity of $u_t$ with values in $H$ shows 
\[
\begin{aligned}
\sup_{\tau\in[s,t]}\|v_h(\tau)-u_t(\tau)\|_2
&\le
\sup_{\tau\in[s,t]}
\frac1{2h}\int_{-h}^{h}
\|u_t(\tau+r)-u_t(\tau)\|_2\,dr\\
&\le
\sup_{\substack{\tau\in[s,t]\\|r|\le h}}
\|u_t(\tau+r)-u_t(\tau)\|_2
\to0\quad\text{as }h\to0.
\end{aligned}
\]
Moreover, Jensen's inequality and Fubini's theorem yield
\[
\begin{aligned}
&\int_s^t\int_\Omega
|v_h(x,\tau)-u_t(x,\tau)|^{m(x)}\,dx\,d\tau\\
&\quad\le
\frac1{2h}\int_{-h}^{h}
\int_s^t\int_\Omega
|u_t(x,\tau+r)-u_t(x,\tau)|^{m(x)}
\,dx\,d\tau\,dr
\to0\quad\text{as }h\to0.
\end{aligned}
\]
Thus, as \(h\to0\), 
\[
v_h\to u_t
\quad\text{strongly in }C([s,t];H)\cap L^{m(x)}(\Omega\times(s,t)).
\]
We first regularize \eqref{eq6} in time, which justifies
testing it with the time-dependent function $v_h$. Integrating
by parts in time, we obtain
\begin{equation}
\label{eq36}
\begin{split}
&(u_t,v_h)\Big|_s^t
-\int_s^t(u_t,\partial_\tau v_h)\,d\tau
+\int_s^t(\nabla u,\nabla v_h)\,d\tau-\int_s^t\int_0^\tau
g(\tau-r)(\nabla u(r),\nabla v_h(\tau))\,dr\,d\tau\\
&\quad+\int_s^t\int_\Omega b(x,u_t)v_h\,dx\,d\tau
=\int_s^t\int_\Omega f(x,u)v_h\,dx\,d\tau.
\end{split}
\end{equation}

For a continuous function $z$ with values in $H$, defined on a neighborhood of $[s,t]$, a change of variables yields
\[
\begin{aligned}
&\int_s^t
\left(z(\tau),
\frac{z(\tau+h)-z(\tau-h)}{2h}\right)\,d\tau\\
&\quad=
\frac1{2h}\left[
\int_{t-h}^t(z(\tau),z(\tau+h))\,d\tau
-\int_{s-h}^s(z(\tau),z(\tau+h))\,d\tau
\right]\to
\frac12\|z\|_2^2\Big|_s^t \quad\text{as }h\to0.
\end{aligned}
\]
Applying the above identity to $z=u_t$ and $z=\nabla u$,
respectively, we obtain
\begin{equation}
\label{eq37}
(u_t,v_h)\Big|_s^t
-\int_s^t(u_t,\partial_\tau v_h)\,d\tau
\to\frac12\|u_t\|_2^2\Big|_s^t\quad\text{as }h\to0,
\end{equation}
\[
\int_s^t(\nabla u,\nabla v_h)\,d\tau
\to\frac12\|\nabla u\|_2^2\Big|_s^t\quad\text{as }h\to0.
\]

For the memory calculation, temporarily write
\[
z(\tau)=\nabla u(\tau),
\quad
k(\tau)=\int_0^\tau g(\tau-r)z(r)\,dr.
\]
Since $z$ is continuous and $g\in W^{1,1}(0,T)$, convolution
differentiation gives $k\in W^{1,1}(0,T;L^2(\Omega)^n)$, with
\begin{equation}
\label{eq38}
k'(\tau)
=g(0)z(\tau)+\int_0^\tau g'(\tau-r)z(r)\,dr
\quad\text{for a.e. }\tau.
\end{equation}
A change of variables in the difference quotient,
followed by passage to the limit, shows
\[
-\int_s^t
\left(k(\tau),
\frac{z(\tau+h)-z(\tau-h)}{2h}\right)\,d\tau
\to
-(k,z)\Big|_s^t+\int_s^t(k',z)\,d\tau\quad\text{as }h\to0.
\]
A direct computation together with \eqref{eq38} implies
\begin{equation}
\label{eq39}
\begin{split}
&\left[\frac12\|z\|_2^2-(k,z)\right]_s^t
+\int_s^t(k',z)\,d\tau\\
&=\frac12\left[
\left(1-\int_0^\tau g(r)\,dr\right)\|z(\tau)\|_2^2
+\int_0^\tau g(\tau-r)\|z(\tau)-z(r)\|_2^2\,dr
\right]_s^t\\
&\quad+\frac12\int_s^t g(\tau)\|z(\tau)\|_2^2\,d\tau
+\frac12\int_s^t\int_0^\tau[-g'(\tau-r)]
\|z(\tau)-z(r)\|_2^2\,dr\,d\tau.
\end{split}
\end{equation}

Since $v_h\to u_t$ in
$L^{m(x)}(\Omega\times(s,t))$, the variable-exponent
H\"{o}lder's inequality yields
\[
\begin{aligned}
&\left|
\int_s^t\int_\Omega b(x,u_t)v_h\,dx\,d\tau
-\int_s^t\int_\Omega |u_t|^{m(x)}\,dx\,d\tau
\right|\\
&\quad=
\left|\int_s^t\int_\Omega b(x,u_t)(v_h-u_t)\,dx\,d\tau\right|\\
&\quad\le
2\|b(x,u_t)\|_{L^{m'(x)}(\Omega\times(s,t))}
\|v_h-u_t\|_{L^{m(x)}(\Omega\times(s,t))}
\to0,
\end{aligned}
\]
where $b(x,u_t)u_t=|u_t|^{m(x)}$. Similarly,
\[
\begin{aligned}
&\left|
\int_s^t\int_\Omega f(x,u)v_h\,dx\,d\tau
-\int_s^t\int_\Omega f(x,u)u_t\,dx\,d\tau
\right|\\
&\quad\le
2\|f(x,u)\|_{L^{m'(x)}(\Omega\times(s,t))}
\|v_h-u_t\|_{L^{m(x)}(\Omega\times(s,t))}
\to0.
\end{aligned}
\]

For almost every $x\in\Omega$, the function $u(x,\cdot)$
has an absolutely continuous representative with
derivative $u_t(x,\cdot)$. Since $\partial_rF(x,r)=f(x,r)$
and $f(x,u)u_t\in L^1(Q_T)$, the scalar chain rule and
Fubini's theorem give
\begin{equation}
\label{eq40}
\int_\Omega F(x,u(t))\,dx-\int_\Omega F(x,u(s))\,dx
=\int_s^t\int_\Omega f(x,u)u_t\,dx\,d\tau
\end{equation}
for $s,t$ in a common set of full measure in $(0,T)$,
with $s\le t$. 
Moreover, with $p_++\sigma<2^*$, the source growth
estimate yields
\[
\int_\Omega|F(x,a)-F(x,b)|\,dx
\le C\|a-b\|_{p_++\sigma}
\left(1+\|a\|_{p_++\sigma}^{{p_++\sigma}-1}+\|b\|_{p_++\sigma}^{{p_++\sigma}-1}\right),
\quad a,b\in L^{p_++\sigma}(\Omega).
\]
Thus $u\in C([0,T];V)$ ensures continuity of the
integrated potential. Therefore, \eqref{eq40} holds for every $0\le s\le t\le T$.

Combining \eqref{eq36} with \eqref{eq37}, \eqref{eq39} and \eqref{eq40} proves \eqref{eq7}
for $0<s<t<T$. Continuity of the energy terms and
integrability of the dissipations extend the identity
to all $0\le s\le t\le T$. In particular, $E$ is locally absolutely continuous and,
for almost every $t$,
\begin{equation}\label{eq41}
\begin{aligned}
E'(t)
&=-\int_\Omega|u_t(x,t)|^{m(x)}\,dx
-\frac12g(t)\|\nabla u(t)\|_2^2\\
&\quad-\frac12\int_0^t[-g'(t-s)]
\|\nabla u(t)-\nabla u(s)\|_2^2\,ds
\le0.
\end{aligned}
\end{equation}

\subsection{Continuation}
Suppose that an energy solution is defined on
$[0,T)$, with $T<\infty$, and
\[
\sup_{0\le t<T}
\bigl(\|u(t)\|_V+\|u_t(t)\|_2\bigr)\le R_T.
\]
By \eqref{1403}, one has 
\begin{align*}
\|\eta^t\|_{\mathcal M}^2
=\int_0^t g(t-s)
\|\nabla u(t)-\nabla u(s)\|_2^2\,ds
+\int_t^\infty g(s)\|\nabla u(t)\|_2^2\,ds\le4(1-\ell)R_T^2.
\end{align*}
Hence $\sup_{t<T}\|U(t)\|_{\HH}<\infty$.
Moreover, the source growth estimate in
Lemma~\ref{lem:growth}, together with
$m'(x)\le m_-'$ and
$q_\sigma=(p_+-1+\sigma)m_-'<2^*$,
implies, by the Sobolev embedding
$V\hookrightarrow L^{q_\sigma}(\Omega)$, that
\[
\int_0^T\int_\Omega |f(x,u)|^{m'(x)}\,dx\,dt
\le C\int_0^T
\bigl(1+\|u(t)\|_{q_\sigma}^{q_\sigma}\bigr)\,dt
<\infty.
\]

We next establish a terminal value for the full phase.
Regard $f(x,u)$ as a prescribed forcing and choose
$h_j\in C_c^\infty(Q_T)$ such that
\[
h_j\to f(x,u)
\quad\text{strongly in }L^{m'(x)}(Q_T)\quad\text{as }j\to\infty.
\]
Let
$W_j=(\widetilde u_j,\widetilde v_j,\widetilde\eta_j)$
solve
\[
W_j'+\Op W_j=(0,h_j,0),
\quad W_j(0)=U_0.
\]
For each $j$, approximate $U_0$ by initial data in $D(\Op)$
and apply the energy estimate to the corresponding strong
solutions of the problem forced by $h_j$. Passing to the
limit in the initial data, and then applying Young's
inequality \eqref{eq4} as in \eqref{eq33}, gives
\[
\sup_j\left[
\sup_{0\le t\le T}\|W_j(t)\|_{\HH}^2
+\int_{Q_T}|\widetilde v_j|^{m(x)}\,dx\,dt
+\int_0^T\int_0^\infty[-g'(s)]
\|\nabla\widetilde\eta_j(t,s)\|_2^2\,ds\,dt
\right]<\infty.
\]
Repeating the derivation of the difference energy
identity \eqref{eq32} for the prescribed forcings
$h_j$ and $h_k$, and using the damping monotonicity
estimate \eqref{eq30} and the variable-exponent
H\"older's inequality \eqref{eq3}, we obtain
\begin{align*}
&\frac12\|W_j(t)-W_k(t)\|_{\HH}^2
+2^{2-m_+}\int_0^t\int_\Omega
|\widetilde v_j-\widetilde v_k|^{m(x)}
\,dx\,d\tau\\
&\quad+\frac12\int_0^t\int_0^\infty[-g'(s)]
\|\nabla(\widetilde\eta_j-\widetilde\eta_k)(\tau,s)\|_2^2
\,ds\,d\tau\\
&\le
2\|h_j-h_k\|_{L^{m'(x)}(Q_T)}
\|\widetilde v_j-\widetilde v_k\|_{L^{m(x)}(Q_T)}
\to0\quad\text{as }j,k\to\infty,
\end{align*}
uniformly for $t\in[0,T]$.
Thus, as $j\to\infty$, 
\[W_j\to W \quad\text{strongly in } C([0,T];\HH),\] 
with strong
convergence in the damping and history-dissipation
spaces. Therefore, $W$ solves the
prescribed-forcing problem with forcing $f(x,u)$.

The given solution $U$ and $W$ have the same initial
data and prescribed forcing. Repeating the
symmetric-time-difference argument in
\eqref{eq36}--\eqref{eq39} for their difference
on compact subintervals of $[0,T)$, and then
letting the lower endpoint tend to zero, gives
\begin{align*}
&\frac12\|U(t)-W(t)\|_{\HH}^2
+\int_0^t\int_\Omega
[b(x,u_t)-b(x,\widetilde v)]
(u_t-\widetilde v)\,dx\,d\tau\\
&\quad+\frac12\int_0^t\int_0^\infty[-g'(s)]
\|\nabla(\eta-\widetilde\eta)(\tau,s)\|_2^2
\,ds\,d\tau=0,
\end{align*}
where $W=(\widetilde u,\widetilde v,\widetilde\eta)$.
By \eqref{eq30} and $-g'\ge0$ from
Assumption~\ref{ass1}(2), all terms are nonnegative,
so $U=W$ on $[0,T)$.
Consequently,
\[
U(t)\to U_T:=W(T)\quad\text{in }\HH
\text{ as }t\to T.
\]

The local construction leading to \eqref{eq34}
and the subsequent passage to the original source
applies to the terminal phase $U_T\in\HH$, including
its history component $\eta^T\in\mathcal M$.
Writing $\widehat u(t)=u(T+t)$, the continued history is
\[
\widehat\eta^t(s)=
\begin{cases}
\widehat u(t)-\widehat u(t-s),&0<s\le t,\\
\eta^T(s-t)+\widehat u(t)-\widehat u(0),&s>t.
\end{cases}
\]
Thus the previous history is retained, and the new
solution joins continuously to the original phase
at $T$, extending the solution beyond $T$.

To obtain a maximal extension, order all extensions of the
given local solution by extension of both their time intervals
and solution values. Let $\mathcal C$ be a chain of such
extensions, and let $[0,T_*)$ be the union of their time
intervals. For every $S<T_*$, some member of $\mathcal C$ is
defined beyond $S$, and all members defined on $[0,S]$ agree
there. Their union is therefore an energy solution on each
compact subinterval of $[0,T_*)$ and provides an upper bound
for $\mathcal C$. Zorn's lemma then yields a maximal extension
on $[0,T_{\max})$.

If $T_{\max}<\infty$ and
\[
\limsup_{t\uparrow T_{\max}}
\bigl(\|u(t)\|_V+\|u_t(t)\|_2\bigr)<\infty,
\]
then continuity on compact time intervals implies
that these norms are bounded throughout
$[0,T_{\max})$. The preceding continuation argument
would extend the solution beyond $T_{\max}$,
contradicting maximality. Hence
\[
\limsup_{t\uparrow T_{\max}}
\bigl(\|u(t)\|_V+\|u_t(t)\|_2\bigr)=\infty
\quad\text{whenever }T_{\max}<\infty.
\]

\section{Finite-time blow-up}\label{sec:blowproof}

The following lemma was established in our previous work \cite{LZ}, to which we refer the reader for its proof.
\begin{lemma}\label{correctionlemma}
For every $q\ge p_->2$ and $r\in\mathbb R$,
\begin{equation}\label{eq42}
|r|^q\left(
\frac{q-p_-}{q}\log|r|+\frac{p_-}{q^2}
\right)\ge -c_{p_-}(q),
\end{equation}
where the expression at $r=0$ is defined by continuity.
For each fixed $q$, $c_{p_-}(q)$ is the smallest
nonnegative constant for which
\eqref{eq42} holds for all $r\in\mathbb R$.
Consequently, every energy solution satisfies
\begin{equation}\label{eq43}
\begin{aligned}
\int_\Omega |u|^{p(x)}\log|u|\,dx
\ge{}&
\frac{p_-}{2}\norm{u_t}_2^2
+\frac{p_-}{2}
\left(1-\int_0^t g(s)\,ds\right)
\norm{\nabla u}_2^2\\
&+\frac{p_-}{2}\int_0^t g(t-s)\|\nabla u(t)-\nabla u(s)\|_2^2\,ds
-p_-\E(t).
\end{aligned}
\end{equation}
\end{lemma}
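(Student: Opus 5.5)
The statement to be proved is Lemma~\ref{correctionlemma}: the pointwise inequality \eqref{eq42}, the optimality of $c_{p_-}(q)$, and the integrated consequence \eqref{eq43}. I would prove it in two steps: a one-variable minimization for \eqref{eq42}, then integration plus the definition of $E$ for \eqref{eq43}.

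\textbf{The pointwise bound.} Put $\phi_q(r)=|r|^q\bigl(\frac{q-p_-}{q}\log|r|+\frac{p_-}{q^2}\bigr)$. If $q=p_-$, then $\phi_q(r)=|r|^{q}/q\ge0=-c_{p_-}(p_-)$, and $\inf\phi_q=0$ is attained at $r=0$, so zero is the smallest admissible constant. If $q>p_-$, set $s=|r|>0$ and $a=(q-p_-)/q$. Then
\[
\phi_q'(s)=s^{q-1}\bigl(qa\log s+a+\tfrac{p_-}{q}\bigr)=s^{q-1}\bigl((q-p_-)\log s+1\bigr).
\]
Since $a+p_-/q=1$, this derivative vanishes only at $\log s_0=-1/(q-p_-)$. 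The function decreases on $(0,s_0)$, increases afterwards, and $\phi_q(0^+)=0$. Hence the global minimum is
\[
\phi_q(s_0)=s_0^q\Bigl(-\tfrac{1}{q}+\tfrac{p_-}{q^2}\Bigr)=-\tfrac{q-p_-}{q^2}\exp\!\Bigl(-\tfrac{q}{q-p_-}\Bigr)=-c_{p_-}(q).
\]
This gives \eqref{eq42}, and because the minimum is attained, $c_{p_-}(q)$ is the sharp constant.

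\textbf{The integrated inequality.} Apply \eqref{eq42} with $q=p(x)$ and $r=u(x,t)$, which is legitimate because $p(x)\ge p_-$ a.e. Integrating over $\Omega$ gives
\[
\int_\Omega\Bigl(1-\tfrac{p_-}{p(x)}\Bigr)|u|^{p(x)}\log|u|\,dx+\int_\Omega\tfrac{p_-}{p(x)^2}|u|^{p(x)}\,dx\ge -C_*.
\]
The integrals are finite since $u(t)\in V$, $p_++\sigma<2^*$, and Lemma~\ref{lem:growth} controls the integrands. Rearranging,
\[
\int_\Omega|u|^{p(x)}\log|u|\,dx\ge p_-\int_\Omega\Bigl(\tfrac{|u|^{p(x)}}{p(x)}\log|u|-\tfrac{|u|^{p(x)}}{p(x)^2}\Bigr)dx-C_*=p_-\int_\Omega F(x,u)\,dx-C_*.
\]
Next, solve the definition \eqref{eq5} for $\int_\Omega F(x,u)\,dx$: it equals the three quadratic terms (kinetic, elastic, memory) minus $E(t)$. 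Substituting, and using $p_-E(t)+C_*=p_-\mathcal E(t)$, yields exactly \eqref{eq43}.

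\textbf{Expected difficulty.} The only delicate point is the case analysis in $q$: at $q=p_-$ the logarithmic term drops out entirely, so it must be handled separately from the calculus argument. Everything else is routine algebra.
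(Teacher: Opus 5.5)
Your proof is correct. Minimizing in one variable, with the critical point at $|r|=e^{-1/(q-p_-)}$ and the case $q=p_-$ handled separately, gives both \eqref{eq42} and the sharpness of $c_{p_-}(q)$. Integrating with $q=p(x)$ and substituting the definition of $E(t)$ together with $\mathcal E(t)=E(t)+C_*/p_-$ then yields \eqref{eq43} exactly. The paper gives no proof here and instead defers to the authors' earlier work, so there is nothing to compare against; your calculus-plus-integration argument is a complete proof.
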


We next establish some preliminary inequalities.

\begin{lemma}\label{lem:pairing-estimate}
Assume that Assumptions~\ref{ass1} and~\ref{ass2} hold. For almost every
$t$,
\begin{equation}\label{eq47}
\begin{split}
\frac d{dt}(u,u_t)
\ge{}&
\left(1+\frac {\hat{q}}{2}\right)\|u_t\|_2^2
+\left(\frac{\hat{q}-2}{2}\ell-\frac{1-\ell}{4\mu}\right)\|\nabla u\|_2^2\\
&+\left(1-\frac {\hat{q}}{p_-}\right)
\int_\Omega|u|^{p(x)}\log|u|dx
-\hat{q}\E(t)-\left|\int_\Omega b(x,u_t)udx\right|,
\end{split}
\end{equation}
where $\hat{q},\mu$ is shown as \eqref{eq46}.
\end{lemma}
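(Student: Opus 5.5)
The plan is to compute $\frac{d}{dt}(u,u_t)$ from the weak formulation \eqref{eq6} with the test function $z=u(t)$. I then split the source integral into two parts and use the source--potential inequality \eqref{eq43} of Lemma~\ref{correctionlemma} on one of them. I expect \eqref{eq46} to fix $2\le\hat q\le p_-$ and $\mu>0$ with $\mu\le\hat q/2$ (for instance $\mu=\hat q/2$). Only this property is used below; if \eqref{eq46} differs, the bookkeeping must be adjusted.

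\emph{Step 1: the derivative of the pairing.} Under Assumption~\ref{ass2} we have $m_+<p_-<2^*$, so $V\hookrightarrow L^{m(x)}(\Omega)$ and $u(t)\in X_m$ for every $t$. The test function $u$ depends on time, so I would justify the product rule with the same symmetric difference quotients used in \eqref{eq36}--\eqref{eq37}. This time the regularization is applied to $u$ as test function rather than $u_t$, with $u_t\in C([0,T];H)$ and $u\in C([0,T];V)$. The integrability needed is $b(x,u_t)\in L^{m'(x)}(Q_T)$ and $f(x,u)\in L^{m'(x)}(Q_T)$ from Theorem~\ref{thm:local}. This gives, for a.e.\ $t$,
\[
\frac{d}{dt}(u,u_t)=\|u_t\|_2^2-\|\nabla u\|_2^2+\int_0^t g(t-s)(\nabla u(s),\nabla u(t))\,ds-\int_\Omega b(x,u_t)u\,dx+\int_\Omega|u|^{p(x)}\log|u|\,dx .
\]
I bound the damping term below by $-\bigl|\int_\Omega b(x,u_t)u\,dx\bigr|$.

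\emph{Step 2: the memory term.} I write $\nabla u(s)=\nabla u(t)-(\nabla u(t)-\nabla u(s))$. The memory term then equals $\bigl(\int_0^tg\bigr)\|\nabla u\|_2^2$ minus $\int_0^t g(t-s)(\nabla u(t)-\nabla u(s),\nabla u(t))\,ds$. By Cauchy--Schwarz and Young's inequality with parameter $\mu$, the subtracted term is at most
\[
\mu\int_0^t g(t-s)\|\nabla u(t)-\nabla u(s)\|_2^2\,ds+\frac{1}{4\mu}\Bigl(\int_0^t g\Bigr)\|\nabla u\|_2^2 .
\]
I then use $\int_0^tg\le 1-\ell$ in the last factor.

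\emph{Step 3: the source term.} I write $\int|u|^{p(x)}\log|u|=(1-\hat q/p_-)\int|u|^{p(x)}\log|u|+(\hat q/p_-)\int|u|^{p(x)}\log|u|$ and apply \eqref{eq43} to the second piece. It contributes
\[
\frac{\hat q}{2}\|u_t\|_2^2+\frac{\hat q}{2}\Bigl(1-\int_0^tg\Bigr)\|\nabla u\|_2^2+\frac{\hat q}{2}\int_0^t g(t-s)\|\nabla u(t)-\nabla u(s)\|_2^2\,ds-\hat q\,\E(t).
\]
Collecting terms, the coefficient of $\|u_t\|_2^2$ is $1+\hat q/2$. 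The coefficient of $\|\nabla u\|_2^2$ is $\bigl(\frac{\hat q}{2}-1\bigr)\bigl(1-\int_0^tg\bigr)-\frac{1-\ell}{4\mu}$, which is at least $\frac{\hat q-2}{2}\ell-\frac{1-\ell}{4\mu}$ because $\hat q\ge2$ and $1-\int_0^tg\ge\ell$. The memory-energy term has coefficient $\frac{\hat q}{2}-\mu\ge0$ and can be discarded. This yields \eqref{eq47}.

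The main obstacle is Step 1, the rigorous justification of the product rule at the level of energy solutions. The algebra in Steps 2--3 is routine once the constraints on $\hat q,\mu$ from \eqref{eq46} are known.
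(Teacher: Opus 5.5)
Your proposal is correct and follows the paper's proof essentially step for step. You test with $u$ via time regularization, justified by $m_+<2^*$. You bound the memory term by Cauchy--Schwarz and Young with parameter $\mu$ together with $\int_0^t g\le 1-\ell$. You split the source with weight $\hat q/p_-$ and feed that piece into \eqref{eq43}.

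The paper fixes $\hat q\in(2,p_-)$ and $\mu<\hat q/2$ in \eqref{eq46}; the strict inequalities and the positivity of the gradient coefficient are needed only later, to get $c_0>0$. For \eqref{eq47} itself, your conditions $2\le\hat q\le p_-$ and $0<\mu\le\hat q/2$ suffice.
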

\begin{proof}
Since $u\in C([0,T];V)$ and $m_+<2^*$, the Sobolev embedding
$V\hookrightarrow L^{2^*}(\Omega)$ yields
\[
\int_{Q_T}|u(x,t)|^{m(x)}\,dx\,dt
\le T|\Omega|+\int_0^T\|u(t)\|_{2^*}^{2^*}\,dt
\le T|\Omega|+TC^{2^*}
\sup_{0\le t\le T}\|\nabla u(t)\|_2^{2^*}<\infty.
\]
Thus
$u\in L^{m(x)}(Q_T)$. 
We may therefore test the weak equation with $u$ after
time regularization. Applying the product rule to $(u,u_t)$ leads  to 
\begin{equation}
\label{eq44}
\begin{split}
\frac d{dt}(u,u_t)
&=\|u_t\|_2^2
-\left(1-\int_0^tg(s)ds\right)\|\nabla u\|_2^2+\int_0^tg(t-s)(\nabla u(t),\nabla u(s)-\nabla u(t))ds\\
&\quad-\int_\Omega b(x,u_t)udx
+\int_\Omega f(x,u)u\,dx.
\end{split}
\end{equation}
For any $\mu>0$, Cauchy--Schwarz and $ab\le a^2/(4\mu)+\mu b^2$ results in 
\begin{equation}
\label{eq45}
\begin{split}
&\left|\int_0^tg(t-s)(\nabla u(t),\nabla u(s)-\nabla u(t))ds\right|\\
&\quad\le\left(\int_0^tg(s)ds\right)^{1/2}\|\nabla u(t)\|_2
\left(\int_0^tg(t-s)\|\nabla u(t)-\nabla u(s)\|_2^2ds\right)^{1/2}\\
&\quad\le\frac{1-\ell}{4\mu}\|\nabla u(t)\|_2^2
+\mu\int_0^tg(t-s)\|\nabla u(t)-\nabla u(s)\|_2^2ds.
\end{split}
\end{equation}
We may choose
$\hat q\in(2,p_-)$ sufficiently close to $p_-$ such that
$\ell>1/(\hat q-1)^2$, which implies $\frac{\hat{q}-2}{2}\ell-\frac{1-\ell}{2\hat{q}}>0.
$
Choose $\mu<\hat{q}/2$ sufficiently close to $\hat{q}/2$ so that
\begin{equation}\label{eq46}
\frac{\hat{q}-2}{2}\ell-\frac{1-\ell}{4\mu}>0,
\quad \frac {\hat{q}}{2}-\mu>0.
\end{equation}
Split the source integral into the fractions $\hat q/p_-$ and
$1-\hat q/p_-$. Multiplying \eqref{eq43} by $\hat q/p_-$ establishes
\[
\begin{aligned}
\int_\Omega f(x,u)u\,dx
\ge{}&\frac{\hat q}{2}\|u_t\|_2^2
+\frac{\hat q}{2}\left(1-\int_0^t g(s)\,ds\right)\|\nabla u\|_2^2+\frac{\hat q}{2}\int_0^t g(t-s)\|\nabla u(t)-\nabla u(s)\|_2^2\,ds
\\
&-\hat q\E(t)+\left(1-\frac{\hat q}{p_-}\right)
\int_\Omega|u|^{p(x)}\log|u|\,dx.
\end{aligned}
\]
Combining this with \eqref{eq44}--\eqref{eq45} and
$1-\int_0^t g\,ds\ge\ell$ yields \eqref{eq47}.
\end{proof}

\begin{lemma}\label{lem:Y-estimate}
Under \eqref{eq9}, define
\[
Y(t)=H^{1-\alpha}(t)+\varepsilon(u(t),u_t(t)),
\]
where $H(t)\ge h_*>0$ is an auxiliary energy function and
$\varepsilon>0$ is a constant. Their precise definitions will be
given in the proofs of Theorems~\ref{thm:negative} and~\ref{thm:high}.
Then
\begin{equation}\label{eq53}
Y^r(t)\le A\left[
H(t)+\|u_t(t)\|_2^2+\|\nabla u(t)\|_2^2
+\frac{|\Omega|}{ep_-}
+\int_\Omega |u|^{p(x)}\log|u|\,dx
\right],
\end{equation}

If, in addition,
\[
\int_\Omega |u|^{p(x)}\log|u|\,dx\ge0,
\]
then
\begin{equation}\label{eq54}
Y^r(t)\le A\left[
H(t)+\|u_t(t)\|_2^2+\|\nabla u(t)\|_2^2
+\int_\Omega |u|^{p(x)}\log|u|\,dx
\right].
\end{equation}
Here $A$ is given in \eqref{1517}.
\end{lemma}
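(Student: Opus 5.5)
We start from the elementary inequality $(a+b)^r\le 2^{r-1}(a^r+b^r)$, applied with $a=H^{1-\alpha}$ and $b=\varepsilon|(u,u_t)|$. This gives
\[
Y^r\le 2^{r-1}\bigl(H+\varepsilon^r|(u,u_t)|^r\bigr),
\]
since $r(1-\alpha)=1$. The whole task is therefore to bound $|(u,u_t)|^r$ by the bracket on the right of \eqref{eq53}. By Cauchy--Schwarz and Poincar\'e,
\[
|(u,u_t)|^r\le C_p^r\|\nabla u\|_2^r\|u_t\|_2^r .
\]
Young's inequality with exponents $2$ and $2$ then yields
\[
\|\nabla u\|_2^r\|u_t\|_2^r\le \tfrac12\|u_t\|_2^{2r}+\tfrac12\|\nabla u\|_2^{2r}.
\]
Here $1<r$ and $2r$ is not necessarily at most $2$, so the two pieces are treated differently. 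I would instead use Young with exponents $2/r$ and $2/(2-r)$; this is allowed because $r<2$, which follows from $\alpha<1/2$ in \eqref{eq9}. The result is
\[
\|u_t\|_2^r\|\nabla u\|_2^r\le \|u_t\|_2^2+C\|\nabla u\|_2^{2r/(2-r)},
\]
and the velocity is absorbed directly by $\|u_t\|_2^2$.

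The remaining term $\|\nabla u\|_2^{s}$, with $s=2r/(2-r)$, is the main obstacle. It must be controlled by $\|\nabla u\|_2^2$, the constant, and the logarithmic source. I would split it as $\|\nabla u\|_2^s\le \|\nabla u\|_2^2$ when $\|\nabla u\|_2\le 1$, and otherwise try $\|\nabla u\|_2^s\le C(1+H+\|\nabla u\|_2^2+\int|u|^{p(x)}\log|u|)$. The latter is delicate, since no power of the source dominates a power of the gradient. An alternative is to reroute the pairing through $L^{p_-}$ rather than $V$:
\[
|(u,u_t)|\le C\|u\|_{p_-}\|u_t\|_2 .
\]
Young then gives $\|u_t\|_2^2+C\|u\|_{p_-}^{2r/(2-r)}$. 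The condition $\alpha<(p_--2)/(2p_-)$ is exactly equivalent to $2r/(2-r)\le p_-$, and this is what makes the route work. Using $\|u\|_{p_-}^{s}\le C(1+\|u\|_{p_-}^{p_-})$ together with Hölder, it suffices to bound $\int_\Omega|u|^{p_-}\,dx$.

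For that, split $\Omega$ pointwise into $\{|u|\le e\}$ and $\{|u|>e\}$. On $\{|u|>e\}$ we have $|u|^{p_-}\le|u|^{p(x)}\log|u|$. On $\{|u|\le e\}$ the integrand is bounded, so it contributes at most $C|\Omega|$. To generate the additive constant in exactly the stated form $|\Omega|/(ep_-)$, one adds and subtracts the negative part of $|u|^{p(x)}\log|u|$ on $\{|u|<1\}$. That negative part satisfies $|u|^{p(x)}|\log|u||\le 1/(ep(x))\le 1/(ep_-)$ by \eqref{eq22}. Hence
\[
\int_\Omega|u|^{p_-}\,dx\le C\Bigl(\frac{|\Omega|}{ep_-}+\int_\Omega|u|^{p(x)}\log|u|\,dx\Bigr)+C|\Omega|,
\]
and the pure constant $C|\Omega|$ is absorbed into $H$, using $H\ge h_*>0$ so that $1\le H/h_*$. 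This fixes $A$ in \eqref{1517}, depending on $\varepsilon$, $C_p$, $h_*$, $|\Omega|$ and $\alpha$.

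For \eqref{eq54}, when $\int_\Omega|u|^{p(x)}\log|u|\,dx\ge0$, the constant $|\Omega|/(ep_-)$ is removed by the same device. It is bounded by $(|\Omega|/(ep_-h_*))H$ and merged into $A$, possibly after enlarging it.
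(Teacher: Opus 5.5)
Your proposal is correct and is essentially the paper's argument: the paper also uses $(a+b)^r\le 2^{r-1}(a^r+b^r)$ with $r(1-\alpha)=1$ and avoids the gradient. It bounds $\|u\|_2$ by $C_2\bigl(1+\frac{|\Omega|}{ep_-}+\int_\Omega|u|^{p(x)}\log|u|\,dx\bigr)^{1/p_-}$, using the same splitting at $|u|=e$ and the same bound $|\Omega|/(ep_-)$ on the negative part, and absorbs the pure constants into $H$ via $1\le H/h_*$. The only difference is cosmetic: the paper uses a three-weight Young inequality with weights $r/p_-$, $r/2$, $1-r/p_--r/2$, whose condition $r/p_-+r/2<1$ is exactly your $2r/(2-r)<p_-$, and this yields the explicit $A$ of \eqref{1517}; your two-step Young plus $x^s\le 1+x^{p_-}$ gives a different but equally admissible constant with the same dependencies.
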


\begin{proof}
For any $x,y\ge0$, $k\ge2$ and $a>0$, Young's inequality shows
\[
xy^{k-1}\le ay^k+
\frac{(k-1)^{k-1}}{k^k}a^{-(k-1)}x^k
\le ay^k+a^{-(k-1)}x^k.
\]
Take $k=m(x)$, $a=\lambda H^{-\alpha}$, and $\lambda\ge1$. Since
$\lambda^{-(m(x)-1)}\le\lambda^{-1}$, we have 
\begin{equation}\label{eq48}
\begin{aligned}
\left|\int_\Omega b(x,u_t)udx\right|
\le\lambda H^{-\alpha}\int_\Omega|u_t|^{m(x)}dx+\lambda^{-1}\int_\Omega H^{\alpha(m(x)-1)}|u|^{m(x)}dx.
\end{aligned}
\end{equation}
To estimate the second term in the right-hand side of \eqref{eq48}, split $\Omega$ into
$\{|u|\le e\}$ and $\{|u|>e\}$.
For  $\{|u|>e\}$, we have $\log|u|>1$ and
$|u|^{p_-}\le |u|^{p(x)}$. Therefore, 
\begin{equation}
\label{eq49}
\begin{split}
\int_{\{|u|>e\}}|u|^{p_-}\,dx
&\le \int_{\{|u|>e\}}|u|^{p(x)}\log|u|\,dx\\
&=\int_\Omega |u|^{p(x)}\log|u|\,dx
-\int_{\{1\le |u|\le e\}}|u|^{p(x)}\log|u|\,dx\\
&\quad+\int_{\{|u|<1\}}|u|^{p(x)}|\log|u||\,dx\\
&\le \frac{|\Omega|}{ep_-}+\int_\Omega |u|^{p(x)}\log|u|\,dx.
\end{split}
\end{equation}
Here  we used 
\[
\int_{\{|u|<1\}}|u|^{p(x)}|\log|u||\,dx
\le\int_{\{|u|<1\}}\frac1{ep(x)}\,dx
\le\frac{|\Omega|}{ep_-}\]
by \eqref{eq22}.

Since $|u|^{m(x)}\le e^{m_+}$ on $\{|u|\le e\}$,
H\"{o}lder's inequality and \eqref{eq49} lead to 
\begin{equation}
\label{eq50}
\begin{split}
\int_\Omega |u|^{m(x)}\,dx
&\le e^{m_+}|\Omega|
+\int_{\{|u|>e\}}|u|^{m_+}\,dx\\
&\le e^{m_+}|\Omega|
+|\Omega|^{1-\beta}
\left(\int_{\{|u|>e\}}|u|^{p_-}\,dx\right)^\beta\\
&\le e^{m_+}|\Omega|
+|\Omega|^{1-\beta}
\left(\frac{|\Omega|}{ep_-}+\int_\Omega |u|^{p(x)}\log|u|\,dx\right)^\beta\\
&\le C_1
\left(1+\frac{|\Omega|}{ep_-}+\int_\Omega |u|^{p(x)}\log|u|\,dx\right)^\beta,
\end{split}
\end{equation}
where $C_1=e^{m_+}|\Omega|+|\Omega|^{1-\beta}$, $\beta=m_+/p_-<1$.
Similarly,
\begin{equation}\label{eq51}
\|u\|_2\le C_2
\left(1+\frac{|\Omega|}{ep_-}+\int_\Omega|u|^{p(x)}\log|u|\,dx\right)^{1/p_-},
\quad C_2=\left(e^2|\Omega|+|\Omega|^{1-2/p_-}\right)^{1/2}.
\end{equation}
Since $r/p_-+r/2<1$ by recalling \eqref{eq9},
Cauchy--Schwarz, \eqref{eq51}, and generalized Young's inequality with weights $r/p_-$, $r/2$, and $1-r/p_--r/2$ illustrate 
\begin{align*}
|(u,u_t)|^r
&\le\|u\|_2^r\|u_t\|_2^r\\
&\le C_2^r\left(1+\frac{|\Omega|}{ep_-}+\int_\Omega|u|^{p(x)}\log|u|\,dx\right)^{r/p_-}
\|u_t\|_2^r\\
&\le C_2^r\left(1+\frac{|\Omega|}{ep_-}+\int_\Omega|u|^{p(x)}\log|u|\,dx
+\|u_t\|_2^2\right).
\end{align*}
Here  we used 
\begin{equation}
\label{eq52}
\int_\Omega|u|^{p(x)}\log|u|\,dx
=\int_{\{|u|\ge1\}}|u|^{p(x)}\log|u|\,dx-\int_{\{|u|<1\}}|u|^{p(x)}|\log|u||\,dx
\ge-\frac{|\Omega|}{ep_-}.
\end{equation}

Since
$r(1-\alpha)=1$ by recalling \eqref{eq9}, the preceding estimate entails
\[
\begin{aligned}
Y^r(t)
&\le2^{r-1}\left(H(t)+\varepsilon^r|(u,u_t)|^r\right)\\
&\le2^{r-1}\left[
H(t)+\varepsilon^rC_2^r
\left(1+\frac{|\Omega|}{ep_-}+\int_\Omega|u|^{p(x)}\log|u|\,dx
+\|u_t\|_2^2\right)\right].
\end{aligned}
\]
Using $1\le H(t)/h_*$ and \eqref{eq52}, we obtain \eqref{eq53}
\begin{equation}\label{1517}
A=2^{r-1}\left(
1+\varepsilon^rC_2^r
\max\left\{1,\frac{1+\frac{|\Omega|}{ep_-}}{h_*}\right\}
\right).
\end{equation}
If the source integral is nonnegative, absorbing $1+\frac{|\Omega|}{ep_-}$
into $H(t)$ due to $1+\frac{|\Omega|}{ep_-}\le(1+\frac{|\Omega|}{ep_-})H(t)/h_*$ instead gives \eqref{eq54}, with the same constant $A$. 

In fact, \eqref{eq54} applies when $\E(t)<0$, otherwise,
we retain form \eqref{eq53}.
\end{proof}

\subsection{Proof of Theorem~\ref{thm:negative}}\label{sec:negative}
We begin with strictly negative shifted energy and then treat the critical
case by starting the differential inequality at a positive time.
First assume $\E(0)<0$ and define 
\[H(t)=-\E(t),\quad h_0=H(0)>0.\]
It follows from \eqref{eq41} and \eqref{eq43} that 
\begin{equation}\label{eq55}
H(t)\ge h_0,\quad
H'(t)\ge\int_\Omega|u_t|^{m(x)}dx,\quad
H(t)\le\frac1{p_-}\int_\Omega|u|^{p(x)}\log|u|dx.
\end{equation}
 Choose $\mu$ by
\eqref{eq46} and put
\[
c_0=\min\left\{1+\frac {\hat{q}}{2},
\frac{\hat{q}-2}{2}\ell-\frac{1-\ell}{4\mu},
1-\frac {\hat{q}}{p_-},\hat{q}\right\}>0.
\]
By \eqref{eq47}, one obtains 
\begin{equation}
\label{eq56}
\frac d{dt}(u,u_t)\ge c_0\left[H(t)+\|u_t\|_2^2+\|\nabla u\|_2^2
+\int_\Omega|u|^{p(x)}\log|u|dx\right]-\left|\int_\Omega b(x,u_t)udx\right|.
\end{equation}

To estimate the second term on the right-hand side of \eqref{eq48},
set $d=\alpha(m_+-1)$ and $\beta=m_+/p_-$. By the choice of
$\alpha$, $d+\beta<1$. Since $H(t)\ge h_0$,
\[
H^{\alpha(m(x)-1)}(t)\le L_0H^d(t),\]
with $L_0=\max\left\{1,h_0^{-\alpha(m_+-m_-)}\right\}.$
Indeed, for $H(t)\ge1$ this follows from $\alpha(m(x)-1)\le d$,
while for $h_0\le H(t)<1$ it follows by dividing by $H^d(t)$.
Using \eqref{eq50}, \eqref{eq55}, and
$(a+b)^\beta\le a^\beta+b^\beta$, we obtain
\begin{equation}
\label{eq58}
\begin{split}
\int_\Omega H(t)^{\alpha(m(x)-1)}|u|^{m(x)}\,dx
&\le L_0C_1p_-^{-d}
\bigg[
(1+\frac{|\Omega|}{ep_-})^\beta
\left(\int_\Omega|u|^{p(x)}\log|u|\,dx\right)^d\\
&\qquad+
\left(\int_\Omega|u|^{p(x)}\log|u|\,dx\right)^{d+\beta}
\bigg]\\
&\le C_3\int_\Omega|u|^{p(x)}\log|u|\,dx,
\end{split}
\end{equation}
where
\[
C_3=L_0C_1p_-^{-d}
\left[
(1+\frac{|\Omega|}{ep_-})^\beta(p_-h_0)^{d-1}
+(p_-h_0)^{d+\beta-1}
\right].
\]
The last inequality follows from
$\int_\Omega|u|^{p(x)}\log|u|\,dx\ge p_-h_0>0$ by \eqref{eq55}
and $d+\beta<1$, since both powers $d-1$ and
$d+\beta-1$ are negative.

Take $\lambda=\max\{1,2C_3/c_0\}$ and $c=c_0/2$. Combining \eqref{eq56} with \eqref{eq48} and \eqref{eq58}, we obtain 
\begin{equation}
\label{eq59}
\frac d{dt}(u,u_t)\ge c\left[H(t)+\|u_t\|_2^2+\|\nabla u\|_2^2
+\int_\Omega|u|^{p(x)}\log|u|dx\right]-\lambda H^{-\alpha}(t)\int_\Omega|u_t|^{m(x)}dx.
\end{equation}
Choose 
\[
0<\varepsilon\le\min\left\{\frac{1-\alpha}{2\lambda},
\frac{h_0^{1-\alpha}}{2(1+|(u_0,u_1)|)}\right\}.
\]
For $Y(t)=H^{1-\alpha}(t)+\varepsilon(u,u_t)$ this gives
$Y(0)\ge h_0^{1-\alpha}/2>0$ and by \eqref{eq54} with $h_*=h_0$ and \eqref{eq59}, we obtain 
\begin{align*}
Y'(t)&=(1-\alpha)H^{-\alpha}H'+\varepsilon\frac d{dt}(u,u_t)\\
&\ge(1-\alpha-\varepsilon\lambda)H^{-\alpha}
\int_\Omega|u_t|^{m(x)}dx\\
&\quad+\varepsilon c\left[H+\|u_t\|_2^2+\|\nabla u\|_2^2
+\int_\Omega|u|^{p(x)}\log|u|dx\right]\\
&\ge\frac{\varepsilon c}{A}Y^r(t).
\end{align*}
On any interval where $Y(t)>0$, the above inequality gives
$Y'(t)\ge(\varepsilon c/A)Y^r(t)>0$ almost everywhere.
If $Y(t)$ had a first zero $t_*<T_{\max}$, integration on
$[0,t]$, followed by $t\to t_*$, would yield
$Y(t_*)\ge Y(0)>0$, a contradiction.
Thus $Y(t)>0$ throughout $[0,T_{\max})$. 
Multiplication by $(1-r)Y^{-r}<0$ and integration yield
\[
\frac d{dt}Y^{1-r}\le-(r-1)\frac{\varepsilon c}{A},\]
further, 
\[
0<Y(t)^{1-r}\le Y(0)^{1-r}-(r-1)\frac{\varepsilon c}{A}t.\]
Set
\[
T_*=\frac{A}{\varepsilon c(r-1)}Y(0)^{1-r}.
\]
If $T_{\max}>T_*$, then $Y(T_*)$ is finite and positive,
whereas the preceding estimate gives
\[
0<Y(T_*)^{1-r}
\le Y(0)^{1-r}
-\frac{\varepsilon c(r-1)}{A}T_*=0,
\]
a contradiction. Hence $T_{\max}\le T_*<\infty$.
The continuation criterion in Theorem~\ref{thm:local}
then implies
\[
\limsup_{t\uparrow T_{\max}}
\bigl(\|u(t)\|_V+\|u_t(t)\|_2\bigr)=\infty.
\]

Now assume $\E(0)=0$ and $(u_0,u_1)\ne(0,0)$. It follows from 
$\E(t)=E(t)+\frac{C_*}{p_-}$ and \eqref{eq7} that 
\[
\begin{aligned}
\E(t)
={}&-\int_0^t\int_\Omega|u_\tau|^{m(x)}\,dx\,d\tau
-\frac12\int_0^t g(\tau)\|\nabla u(\tau)\|_2^2\,d\tau\\
&-\frac12\int_0^t\int_0^\tau[-g'(\tau-s)]
\|\nabla u(\tau)-\nabla u(s)\|_2^2\,ds\,d\tau
\le0.
\end{aligned}
\]
If $\mathcal E(t)=0$ for some $t>0$, then the energy identity
implies that all three dissipation integrals vanish. In particular,
\[
\int_0^t g(\tau)\|\nabla u(\tau)\|_2^2\,d\tau=0.
\]
Since $g(\tau)>0$, it follows that $\|\nabla u(\tau)\|_2=0$ for
almost every $\tau\in(0,t)$. The continuity
$u\in C([0,t];V)$ then gives $\|\nabla u(\tau)\|_2=0$ for every
$\tau\in[0,t]$. Hence $u\equiv0$ on $[0,t]$, and therefore
$u_t\equiv0$, contradicting $(u_0,u_1)\ne(0,0)$. Thus
$\mathcal E(\tau)<0$ for every $0<\tau<T_{\max}$.

Fix such a $\tau$ and set $h_\tau=-\E(\tau)>0$.
The preceding argument applies on $[\tau,T_{\max})$,
with $h_0$ replaced by $h_\tau$ and the initial pairing
replaced by $(u(\tau),u_t(\tau))$. Choosing the corresponding
constants $\varepsilon_\tau,c_\tau,A_\tau$ and setting
\[
Y_\tau(t)=(-\E(t))^{1-\alpha}
+\varepsilon_\tau(u(t),u_t(t)),
\]
we obtain $Y_\tau(\tau)>0$ and
\[
0<Y_\tau(t)^{1-r}
\le Y_\tau(\tau)^{1-r}
-\frac{\varepsilon_\tau c_\tau(r-1)}{A_\tau}(t-\tau).
\]
Therefore,
\[
T_{\max}\le
\tau+\frac{A_\tau}{\varepsilon_\tau c_\tau(r-1)}
Y_\tau(\tau)^{1-r}<\infty.
\]
All memory integrals retain their original lower limit
zero, and only the integration of the differential inequality
starts at $\tau$. The norm blow-up follows from
Theorem~\ref{thm:local}.

\subsection{Proof of Theorem~\ref{thm:high}}\label{sec:high}
In this section,  the signed source integral need not be positive. 
Let 
\begin{equation}
\label{eq60}
H(t)=1+\E(0)-\E(t).
\end{equation}
Then by \eqref{eq41}
\begin{equation}
\label{eq61}
H(t)\ge1,\quad H'(t)\ge\int_\Omega|u_t|^{m(x)}\,dx.
\end{equation} 
Using \eqref{eq43} and \eqref{eq52}, we obtain 
\begin{equation}
\label{eq62}
\begin{split}
H(t)&\le1+[\E(0)]_++\frac1{p_-}
\int_\Omega|u|^{p(x)}\log|u|\,dx\\
&=1+[\E(0)]_++\frac1{p_-}\left(
\frac{|\Omega|}{ep_-}+\int_\Omega|u|^{p(x)}\log|u|\,dx\right)
-\frac{|\Omega|}{ep_-^2}\\
&\le1+[\E(0)]_++\frac{|\Omega|}{ep_-}+
\int_\Omega|u|^{p(x)}\log|u|\,dx.
\end{split}
\end{equation}
Combining \eqref{eq47} with \eqref{eq60},  \eqref{eq62}  and the choice of $c_0$ implies
\begin{equation}
\label{eq63}
\begin{split}
\frac d{dt}(u,u_t)\ge{}&c_0\left[H(t)+\|u_t\|_2^2+\|\nabla u\|_2^2
+\frac{|\Omega|}{ep_-}+\int_\Omega|u|^{p(x)}\log|u|dx\right]\\
&-[\hat{q}+(1-\hat{q}/p_-)\frac{|\Omega|}{ep_-}](1+[\E(0)]_+)
-\left|\int_\Omega b(x,u_t)udx\right|.
\end{split}
\end{equation}
Apply \eqref{eq48} with $\lambda=1$. It follows from  $H(t)\ge1$, \eqref{eq50}
 and \eqref{eq62} that 
\begin{equation}
\label{eq64}
\begin{split}
\int_\Omega H^{\alpha(m(x)-1)}(t)|u|^{m(x)}\,dx
&\le C_1H^{\alpha(m_+-1)}(t)\Big(1+\frac{|\Omega|}{ep_-}+
\int_\Omega|u|^{p(x)}\log|u|\,dx\Big)^{m_+/p_-}\\
&\le C_1\Big(1+[\E(0)]_++\frac{|\Omega|}{ep_-}+
\int_\Omega|u|^{p(x)}\log|u|\,dx\Big)^\theta,
\end{split}
\end{equation}
where 
$\theta=\alpha(m_+-1)+m_+/p_-<1$. 
For $x\ge0$, we recall 
$
x^\theta\le\delta x+(1-\theta)(\theta/\delta)^{\theta/(1-\theta)}.
$ 
Take $\delta=c_0/(2C_1)$ and denote the last constant by $C_\theta$.
Note that $1+[\E(0)]_+\ge1$, then \eqref{eq64} shows 
\begin{equation}
\label{eq65}
\int_\Omega H^{\alpha(m(x)-1)}(t)|u|^{m(x)}\,dx\le\frac{c_0}{2}\left(\frac{|\Omega|}{ep_-}+\int_\Omega|u|^{p(x)}\log|u|dx\right)
+\left(\frac{c_0}{2}+C_1C_\theta\right)(1+[\E(0)]_+).
\end{equation}

Set 
\[
B=\hat{q}+(1-\hat{q}/p_-)\frac{|\Omega|}{ep_-}+\frac{c_0}{2}+C_1C_\theta,\quad
\varepsilon=\frac{1-\alpha}{2}.
\]
Using \eqref{eq61}, \eqref{eq63}, \eqref{eq65} and \eqref{eq48} with $\lambda=1$, we obtain
\begin{equation*}
\begin{split}
Y'(t)&=(1-\alpha)H^{-\alpha}(t)H'(t)+\varepsilon\frac d{dt}(u,u_t)\\
&\ge \varepsilon c\left[H(t)+\|u_t\|_2^2+\|\nabla u\|_2^2
+\frac{|\Omega|}{ep_-}+\int_\Omega|u|^{p(x)}\log|u|dx\right]
-\varepsilon B(1+[\E(0)]_+).
\end{split}
\end{equation*}
Combining the above inequality with \eqref{eq53} equipped with  $h_*=1$, we obtain, as long as $Y(t)>0$,
\[
Y'(t)\ge\frac{\varepsilon c}{A}Y^r(t)-\varepsilon B(1+[\E(0)]_+).
\]

By \eqref{eq10},
\[
Y(0)>
\left[\frac{2AB}{c}\bigl(1+[\E(0)]_+\bigr)\right]^{1/r}.
\]
Whenever $Y(t)$ exceeds this threshold,
\[
Y'(t)\ge\frac{\varepsilon c}{2A}Y^r(t)>0.
\]
A first-crossing argument therefore shows that $Y$
remains above the threshold throughout $[0,T_{\max})$.
Since $r>1$, integration gives
\[
0<Y(t)^{1-r}
\le Y(0)^{1-r}-\frac{\varepsilon c(r-1)}{2A}t.
\]
The right-hand side must remain positive for
$t<T_{\max}$, thus 
\[
T_{\max}\le
\frac{2A}{\varepsilon c(r-1)}Y(0)^{1-r}<\infty.
\]

\subsection{Proof of Theorem~\ref{thm:every}}
Choose $\varphi\in C_c^\infty(\Omega)$ such that
$0\le\varphi\le1$ and $\varphi=1$ on a ball
$B\Subset\Omega$ of positive measure. For each fixed $x$,
$F(x,\cdot)$ is even, decreases on $(0,1)$, and increases on
$(1,\infty)$. Hence its minimum is
$F(x,1)=-1/p(x)^2\ge-1/p_-^2$. 

For sufficiently large $s$ and every $x\in B$,
\[
F(x,s)=\frac{s^{p(x)}}{p(x)}
\left(\log s-\frac1{p(x)}\right)
\ge\frac{s^{p_-}}{p_+}
\left(\log s-\frac1{p_-}\right).
\]
Using the global lower bound for $F$ on $\Omega\setminus B$, we obtain
\[
\int_\Omega F(x,s\varphi)\,dx
\ge
\frac{|B|s^{p_-}}{p_+}
\left(\log s-\frac1{p_-}\right)
-\frac{|\Omega|}{p_-^2}.
\]
Since $p_->2$, it follows that
\[
\frac{1}{s^2}\int_\Omega F(x,s\varphi)\,dx\to+\infty
\quad\text{as }s\to\infty.
\]

Fix $R_0\in\mathbb R$. For sufficiently large $s$, define
\[
u_0=s\varphi,\quad u_1=a_s\varphi,
\]
where
\[
a_s=
\left[
\frac{2}{\|\varphi\|_2^2}
\left(
R_0+\int_\Omega F(x,s\varphi)\,dx
-\frac{s^2}{2}\|\nabla\varphi\|_2^2
\right)
\right]^{1/2}.
\]
The quantity under the square root is positive for all sufficiently
large $s$. Since the memory term vanishes at $t=0$, direct substitution
into the energy gives
\[
E(0)=\frac12a_s^2\|\varphi\|_2^2
+\frac{s^2}{2}\|\nabla\varphi\|_2^2
-\int_\Omega F(x,s\varphi)\,dx
=R_0.
\]
Moreover,
\[
\frac{a_s^2}{s^2}
=\frac{2}{\|\varphi\|_2^2}
\left[
\frac{R_0}{s^2}
+\frac{1}{s^2}\int_\Omega F(x,s\varphi)\,dx
-\frac12\|\nabla\varphi\|_2^2
\right]\to+\infty\quad\text{as }s\to\infty.
\]
Thus $a_s/s\to\infty$, and consequently
\[
(u_0,u_1)=s a_s\|\varphi\|_2^2\to+\infty\quad\text{as }s\to\infty.
\]
The shifted initial energy $\mathcal E(0)=R_0+C_*/p_-$ is fixed.
Therefore, for sufficiently large $s$, the initial-pairing
condition \eqref{eq10} holds. Theorem~\ref{thm:high} then proves
Theorem~\ref{thm:every}.

\section{Global existence in the stable region}\label{sec:globalproof}
In this section, we prove Theorem \ref{thm:global} by showing that the initial-data condition
\eqref{eq13} confines the solution to an invariant stable region,
where the energy controls the velocity, displacement gradient, and
memory term.

It follows from \eqref{eq22} that 
\begin{equation}\label{eq66}
\begin{aligned}
\int_\Omega f(x,u)u\,dx
&\le\int_{|u|>1}|u|^{p(x)}\log|u|dx \le\frac1{e\sigma}\int_{|u|>1}|u|^{p(x)+\sigma}dx\\
&\le\frac1{e\sigma}\|u\|_{p_++\sigma}^{p_++\sigma}
\le\frac{S_p^{p_++\sigma}}{e\sigma}\|\nabla u\|_2^{p_++\sigma}.
\end{aligned}
\end{equation}
If $\|\nabla u\|_2\le R$, the definition of $R$ in \eqref{eq12} and the sign of $F$
on $|u|\le1$ lead to 
\begin{align*}
\int_\Omega F(x,u)\,dx
&\le\frac1{p_-}\int_{|u|>1}|u|^{p(x)}\log|u|\,dx\\
&\le \frac1{p_-}\frac{S_p^{p_++\sigma}}{e\sigma}\|\nabla u\|_2^{p_++\sigma}\\
&\le\frac1{p_-}\frac{S_p^{p_++\sigma}R^{{p_++\sigma}-2}}{e\sigma}\|\nabla u\|_2^2
=\frac\ell{2p_-}\|\nabla u\|_2^2.
\end{align*}
Using $1-\int_0^tg\,ds\ge\ell$ in the energy definition \eqref{eq5}, we obtain
\begin{equation}\label{eq67}
\begin{aligned}
E(t)\ge\frac12\|u_t(t)\|_2^2
+\frac{\ell(p_--1)}{2p_-}\|\nabla u(t)\|_2^2+\frac12\int_0^tg(t-s)\|\nabla u(t)-\nabla u(s)\|_2^2ds\ge c_E \widehat{E}(t).
\end{aligned}
\end{equation}
with \[c_E=\min\left\{\frac12,\frac{\ell(p_--1)}{2p_-}\right\}.\]
as long as $\|\nabla u(t)\|_2\le R$.

Next, we show that
\begin{equation}
\label{eq68}
\|\nabla u(t)\|_2<R
\quad\text{for every }t\in[0,T_{\max}).
\end{equation}
Suppose otherwise. Since $\|\nabla u_0\|_2<R$ and
$u\in C([0,T_{\max});V)$, there exists a first time
$t_*\in(0,T_{\max})$ such that
\[
\|\nabla u(t)\|_2<R\quad\text{for }0\le t<t_*,
\quad
\|\nabla u(t_*)\|_2=R.
\]
Thus \eqref{eq67} holds throughout $[0,t_*]$, which together with \eqref{eq41} and \eqref{eq12} derive 
\[
d_0
=\frac{\ell(p_--1)}{2p_-}R^2
=\frac{\ell(p_--1)}{2p_-}\|\nabla u(t_*)\|_2^2
\le E(t_*)\le E(0)<d_0,
\]
a contradiction. Consequently, the solution remains
in the stable region throughout its maximal interval
of existence.

In particular, \eqref{eq67} and $E(t)\le E(0)$ show
\[
\|u_t(t)\|_2^2\le 2E(0),
\quad
\|\nabla u(t)\|_2^2
\le\frac{2p_-}{\ell(p_--1)}E(0),
\quad 0\le t<T_{\max}.
\]
Hence $\|u(t)\|_V+\|u_t(t)\|_2$ is uniformly bounded
on $[0,T_{\max})$. If $T_{\max}<\infty$, this would
contradict the continuation criterion
\eqref{eq8}. Therefore $T_{\max}=\infty$,
and the solution is global.

Using again \eqref{eq22}, for  $0<|r|\le1$, then
\begin{align*}
\frac{[-F(x,r)]_+}{|r|^2}
&=\frac{|r|^{p(x)-2}|\log|r||}{p(x)}
 +\frac{|r|^{p(x)-2}}{p(x)^2}\\
&\le\frac{|r|^{p_--2}|\log|r||}{p_-}
 +\frac{|r|^{p_--2}}{p_-^2}
\le\frac1{ep_-(p_--2)}+\frac1{p_-^2}.
\end{align*}
For $|r|>1$, the negative part of $F$ vanishes unless
$|r|<e^{1/p(x)}\le e^{1/p_-}$. In this interval,
\[
\frac{[-F(x,r)]_+}{|r|^2}
\le\frac{|r|^{p(x)-2}}{p(x)^2}
\le\frac{e^{(p_+-2)/p_-}}{p_-^2}.
\]
Since $F(x,0)=0$, we conclude that
\[
-F(x,r)\le[-F(x,r)]_+\le C_4|r|^2,\]
with
\[
C_4=\max\left\{
\frac1{ep_-(p_--2)}+\frac1{p_-^2},
\frac{e^{(p_+-2)/p_-}}{p_-^2}
\right\}.
\]
Thus, Poincar\'e's inequality implies 
\[
-\int_\Omega F(x,u)\,dx
\le C_4\|u\|_2^2
\le C_4C_p^2\|\nabla u\|_2^2.
\]
Using $1-\int_0^t g(s)\,ds\le1$ in the energy definition \eqref{eq5},
we obtain
\begin{align*}
E(t)\le
\frac12\|u_t(t)\|_2^2
+\left(\frac12+C_4C_p^2\right)\|\nabla u(t)\|_2^2+\frac12\int_0^t g(t-s)
\|\nabla u(t)-\nabla u(s)\|_2^2\,ds\le C_E \widehat{E}(t),
\end{align*}
with
\[
C_E=\frac12+C_4C_p^2,
\]
which together with \eqref{eq67} completes the proof of \eqref{eq14}.

\section{Decay in the Sobolev damping range}\label{sec:decayproof}
\begin{lemma}\label{lem:firstmult}
Under the hypotheses of Theorem \ref{thm:global} and $m_+\le2^*$,
for almost every $t>0$,
\begin{equation}\label{eq69}
\frac d{dt}(u,u_t)
\le\|u_t\|_2^2-\frac\ell4\|\nabla u\|_2^2
+C\|\nabla w\|_2^2
+C\int_\Omega|u_t|^{m(x)}\,dx,
\end{equation}
where 
\[
w(t):=\int_0^t g(t-s)[u(t)-u(s)]\,ds.
\]
\end{lemma}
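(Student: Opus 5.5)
The plan is to test the weak formulation \eqref{eq6} with $z=u(t)$, as in Lemma~\ref{lem:pairing-estimate}. Since $m_+\le2^*$, Sobolev embedding gives $u\in L^{m(x)}(Q_T)$, so after the time regularization of Section~\ref{sec:localproof} we get, for a.e.\ $t$,
\[
\frac d{dt}(u,u_t)=\|u_t\|_2^2-\Bigl(1-\int_0^tg(s)\,ds\Bigr)\|\nabla u\|_2^2-(\nabla u,\nabla w)-\int_\Omega b(x,u_t)u\,dx+\int_\Omega f(x,u)u\,dx .
\]
Here we used the identity $\int_0^tg(t-s)\nabla u(s)\,ds=(\int_0^tg)\nabla u(t)-\nabla w(t)$. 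The middle coefficient is at least $\ell$, so the elastic contribution is at most $-\ell\|\nabla u\|_2^2$. Young's inequality then gives $|(\nabla u,\nabla w)|\le\frac\ell8\|\nabla u\|_2^2+\frac2\ell\|\nabla w\|_2^2$.

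\textbf{Source term.} By \eqref{eq66}, $\int_\Omega f(x,u)u\le \frac{S_p^{p_++\sigma}}{e\sigma}\|\nabla u\|_2^{p_++\sigma}$. In the proof of Theorem~\ref{thm:global} we showed $\|\nabla u(t)\|_2<R$ for all $t$. The choice of $R$ in \eqref{eq12} gives $\frac{S_p^{p_++\sigma}}{e\sigma}R^{p_++\sigma-2}=\ell/2$. Hence this term is at most $\frac\ell2\|\nabla u\|_2^2$. So far the coefficient of $\|\nabla u\|_2^2$ is $-\ell+\frac\ell8+\frac\ell2=-\frac{3\ell}{8}$, which leaves a margin of $\frac\ell8$ for the damping term.

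\textbf{Damping term (main step).} By Young's inequality \eqref{eq4} with exponents $m'(x)$ and $m(x)$,
\[
\Bigl|\int_\Omega b(x,u_t)u\,dx\Bigr|\le \delta\int_\Omega|u|^{m(x)}\,dx+C_\delta\int_\Omega|u_t|^{m(x)}\,dx .
\]
We need to bound $\int_\Omega|u|^{m(x)}$ by a multiple of $\|\nabla u\|_2^2$. On $\{|u|\le1\}$ we have $|u|^{m(x)}\le|u|^2$, since $m\ge2$. On $\{|u|>1\}$ we have $|u|^{m(x)}\le|u|^{m_+}$, and $m_+\le2^*$ gives $\|u\|_{m_+}\le C\|\nabla u\|_2$. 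The uniform bound $\|\nabla u\|_2\le R$ then yields $\|\nabla u\|_2^{m_+}\le R^{m_+-2}\|\nabla u\|_2^2$. Together with Poincar\'e's inequality this gives $\int_\Omega|u|^{m(x)}\le C_R\|\nabla u\|_2^2$, with $C_R$ independent of $t$.

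Choose $\delta=\ell/(8C_R)$. The damping term then costs at most $\frac\ell8\|\nabla u\|_2^2+C\int_\Omega|u_t|^{m(x)}$. This brings the coefficient of $\|\nabla u\|_2^2$ to $-\frac\ell4$ and yields \eqref{eq69}.

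The only delicate points are justifying the test with $u$ and obtaining the uniform constant $C_R$. The first is handled as in Lemma~\ref{lem:pairing-estimate}. The second is exactly where the hypothesis $m_+\le2^*$ and the invariance \eqref{eq68} of the stable region are used.
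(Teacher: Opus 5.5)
Your proof is correct and follows essentially the same route as the paper. You test with $u$ to obtain \eqref{eq44}, rewrite the memory term as $-(\nabla u,\nabla w)$, and bound the source by $\frac\ell2\|\nabla u\|_2^2$ via \eqref{eq66} and \eqref{eq68}; you then control the damping pairing by Young's inequality and $\int_\Omega|u|^{m(x)}\,dx\le C\|\nabla u\|_2^2$, using $m_+\le2^*$ and $\|\nabla u\|_2<R$ (on $\{|u|\le1\}$ you use $|u|^{m(x)}\le|u|^2$ with Poincar\'e instead of the paper's $\|u\|_{m_-}^{m_-}$ bound, an equivalent variant).
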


\begin{proof}
By Cauchy--Schwarz, \eqref{eq67}, and \eqref{eq41}, one establishes 
\begin{equation}\label{eq70}
\begin{aligned}
\|\nabla w(t)\|_2^2
&\le\left(\int_0^t g(s)\,ds\right)
\int_0^t g(t-s)
\|\nabla u(t)-\nabla u(s)\|_2^2\,ds\\
&\le(1-\ell)\int_0^t g(t-s)
\|\nabla u(t)-\nabla u(s)\|_2^2\,ds\\
&\le\frac{1-\ell}{c_E}E(t)
\le\frac{1-\ell}{c_E}E(0).
\end{aligned}
\end{equation}

It follows from \eqref{eq68} and \eqref{eq70}
that, for $z=u(t)$ or $z=w(t)$,
\begin{align*}
\int_\Omega|z|^{m(x)}\,dx
\le\|z\|_{m_-}^{m_-}+\|z\|_{m_+}^{m_+}
\le C^{m_-}\|\nabla z\|_2^{m_-}
+C^{m_+}\|\nabla z\|_2^{m_+}
\le C\|\nabla z\|_2^2,
\end{align*}
by splitting into $|z|\le1$ and $|z|>1$.
By Young's inequality and the above inequality, we get
\begin{equation}\label{eq71}
\begin{aligned}
\left|\int_\Omega b(x,u_t)z\,dx\right|
=\left|\int_\Omega |u_t|^{m(x)-2}u_tz\,dx\right|\le\varepsilon C\|\nabla z\|_2^2
+C_\varepsilon\int_\Omega|u_t|^{m(x)}\,dx.
\end{aligned}
\end{equation}

By \eqref{eq66}, Young's inequality, and
\eqref{eq71},
\[
\int_\Omega f(x,u)u\,dx
\le\frac{\ell}{2}\|\nabla u\|_2^2,
\]
\[
|(\nabla u,\nabla w)|
\le\frac\ell8\|\nabla u\|_2^2
+\frac2\ell\|\nabla w\|_2^2,
\]
\[
\left|\int_\Omega b(x,u_t)u\,dx\right|
\le\frac\ell8\|\nabla u\|_2^2
+C\int_\Omega|u_t|^{m(x)}\,dx.
\]
Combining these estimates with \eqref{eq44} and using
$1-\int_0^t g(s)\,ds\ge\ell$ proves
\eqref{eq69}.
\end{proof}

\begin{lemma}\label{lem:secondmult}
Under the hypotheses of Lemma~\ref{lem:firstmult},
fix $t_0>0$ and set
$\Gamma_0=\int_0^{t_0}g(s)\,ds>0$.
For every $\delta>0$ and almost every $t\ge t_0$,
\begin{equation}\label{eq72}
\begin{aligned}
\frac d{dt}[-(u_t,w)]
\le{}&-\frac{\Gamma_0}{2}\|u_t\|_2^2
+\delta\|\nabla u\|_2^2
+C_\delta\|\nabla w\|_2^2\\
&+C_\delta\int_\Omega|u_t|^{m(x)}\,dx+C_\delta\int_0^t[-g'(t-s)]
\|\nabla u(t)-\nabla u(s)\|_2^2\,ds,
\end{aligned}
\end{equation}
where $C_\delta$ may depend on $t_0$ but is
independent of time.
\end{lemma}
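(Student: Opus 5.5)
We differentiate $-(u_t,w)$ using the weak formulation \eqref{eq6}, after the same time regularization used for \eqref{eq44}; this is legitimate because $w\in C([0,T];V)$ and, by the argument in Lemma~\ref{lem:firstmult}, $w\in L^{m(x)}(Q_T)$ when $m_+\le2^*$. We obtain
\[
\frac d{dt}[-(u_t,w)]=-(u_{tt},w)-(u_t,w_t).
\]
Testing \eqref{eq6} with $w$ and writing $\nabla u-\int_0^t g(t-s)\nabla u(s)\,ds=(1-\int_0^tg)\nabla u+\nabla w$ gives
\[
-(u_{tt},w)=\Bigl(1-\int_0^tg\Bigr)(\nabla u,\nabla w)+\|\nabla w\|_2^2+\int_\Omega b(x,u_t)w\,dx-\int_\Omega f(x,u)w\,dx .
\]
By the convolution rule \eqref{eq38} we have
\[
w_t=\Bigl(\int_0^tg(s)\,ds\Bigr)u_t+\int_0^t g'(t-s)[u(t)-u(s)]\,ds .
\]
Hence
\[
-(u_t,w_t)=-\Bigl(\int_0^tg\Bigr)\|u_t\|_2^2-\int_0^tg'(t-s)(u_t,u(t)-u(s))\,ds .
\]
For $t\ge t_0$ we have $\int_0^tg\ge\Gamma_0$, which produces the good term $-\Gamma_0\|u_t\|_2^2$. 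Half of it will be spent absorbing the $g'$-term.

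The remaining terms are estimated one at a time. The elastic cross term satisfies $|(\nabla u,\nabla w)|\le\delta\|\nabla u\|_2^2+C_\delta\|\nabla w\|_2^2$. The damping term is handled by \eqref{eq71} with $z=w$: it is bounded by $\|\nabla w\|_2^2+C\int_\Omega|u_t|^{m(x)}dx$, which uses the bound \eqref{eq70} on $\|\nabla w\|_2$ to get $\rho_m(w)\le C\|\nabla w\|_2^2$.

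For the $g'$-term, Cauchy--Schwarz in $s$ together with $\int_0^t(-g')\le g(0)$ and Poincar\'e's inequality gives
\[
\frac{\Gamma_0}{2}\|u_t\|_2^2+\frac{g(0)C_p^2}{2\Gamma_0}\int_0^t[-g'(t-s)]\|\nabla u(t)-\nabla u(s)\|_2^2\,ds,
\]
which has the required form with a constant depending on $t_0$ through $\Gamma_0$.

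The source term is the main obstacle, since $f$ is superlinear and $w$ is not small pointwise. We would use $|f(x,u)|\le C|u|^{p(x)-1}(1+|\log|u||)$ and choose $\sigma$ as in \eqref{eq66}, so that for $|u|>1$ the bound is $C|u|^{p_+-1+\sigma}$, while for $|u|\le1$ it is $\le C|u|^{p_--1-\epsilon'}$ with $p_--1-\epsilon'\ge1$. Then H\"older with exponents $(p_++\sigma)/(p_++\sigma-1)$ and $p_++\sigma$, Sobolev embedding, and the uniform bound \eqref{eq68} $\|\nabla u\|_2<R$ give
\[
\Bigl|\int_\Omega f(x,u)w\,dx\Bigr|\le C\bigl(\|\nabla u\|_2+\|\nabla u\|_2^{p_++\sigma-1}\bigr)\|\nabla w\|_2\le C\|\nabla u\|_2\|\nabla w\|_2\le\delta\|\nabla u\|_2^2+C_\delta\|\nabla w\|_2^2 .
\]
Here the small-$|u|$ region also needs care: we would bound $|u|^{p(x)-1}|\log|u||\le C|u|$ using \eqref{eq22} with $a=p_--2$, and then apply Cauchy--Schwarz with Poincar\'e. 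Collecting all the estimates yields \eqref{eq72}. Every constant depends only on $R$, $E(0)$, the structural parameters and $\Gamma_0$, so it is independent of $t$.
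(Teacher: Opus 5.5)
Your proposal is correct and follows the paper's proof essentially step by step. It uses the same identity for $\frac{d}{dt}[-(u_t,w)]$ obtained from the convolution derivative of $w$, the lower bound $\int_0^t g\ge\Gamma_0$, the damping estimate via \eqref{eq71} and \eqref{eq70}, and the same Cauchy--Schwarz and Poincar\'e bound for the $g'$-term with constant $C_p^2g(0)/(2\Gamma_0)$. The source term differs only cosmetically: the paper bounds $\|f(x,u)\|_{V^*}\le C\|\nabla u\|_2$ through the $(2^*)'$--$2^*$ H\"older pairing, whereas you pair $L^{(p_++\sigma)/(p_++\sigma-1)}$ with $L^{p_++\sigma}$, and both versions rest on $p_++\sigma<2^*$, on $|f(x,s)|\le C|s|$ for $|s|\le1$ from \eqref{eq22}, and on \eqref{eq68}.
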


\begin{proof}
Since $w\in C([0,T];V)\cap L^{m(x)}(Q_T)$ and
$w_t\in C([0,T];H)$, time regularization justifies
the product rule and spatial integration by parts,
yielding
\begin{equation}
\label{eq73}
\begin{split}
\frac d{dt}[-(u_t,w)]
={}&\left(1-\int_0^t g(s)\,ds\right)(\nabla u,\nabla w)
+\|\nabla w\|_2^2\\
&+\int_\Omega b(x,u_t)w\,dx
-\int_\Omega f(x,u)w\,dx
-\left(\int_0^t g(s)\,ds\right)\|u_t\|_2^2\\
&-\left(u_t,\int_0^t g'(t-s)[u(t)-u(s)]\,ds\right)
\end{split}
\end{equation}
for almost every $t$.

By \eqref{eq22}, for $0<|s|\le1$,
\[
|f(x,s)|\le |s|\,|s|^{p_--2}|\log|s||
\le\frac{|s|}{e(p_--2)},
\]
and for $|s|>1$,
$|f(x,s)|\le(e\sigma)^{-1}|s|^{p_++\sigma-1}$.
Since $(p_++\sigma-1)(2^*)'<2^*$, Sobolev embedding
and \eqref{eq68} give
\begin{align*}
\|f(x,u)\|_{(2^*)'}
&\le C\bigl(\|u\|_{(2^*)'}+
\|u\|_{(p_++\sigma-1)(2^*)'}^{p_++\sigma-1}\bigr)\\
&\le C\bigl(\|\nabla u\|_2+
\|\nabla u\|_2^{p_++\sigma-1}\bigr)
\le C(1+R^{p_++\sigma-2})\|\nabla u\|_2.
\end{align*}
Consequently, for every $z\in V$,
\[
|\langle f(x,u),z\rangle|
\le\|f(x,u)\|_{(2^*)'}\|z\|_{2^*}
\le C\|\nabla u\|_2\|z\|_V.
\]
Taking the supremum over $\|z\|_V\le1$, we obtain
\begin{equation}\label{eq74}
\|f(x,u)\|_{V^*}\le C\|\nabla u\|_2.
\end{equation}

For $t\ge t_0$, we have
\[
-\left(\int_0^t g(s)\,ds\right)\|u_t(t)\|_2^2
\le
-\left(\int_0^{t_0}g(s)\,ds\right)\|u_t(t)\|_2^2
=-\Gamma_0\|u_t(t)\|_2^2.
\]
Using \eqref{eq74}, Young's inequality, \eqref{eq71}, the
Cauchy--Schwarz inequality, and Poincar\'e's inequality, we obtain, for every
$\delta>0$,
\[\left|
\left(1-\int_0^t g(s)\,ds\right)(\nabla u,\nabla w)
\right|
+\left|\int_\Omega f(x,u)w\,dx\right|
\le\delta\|\nabla u\|_2^2+C_\delta\|\nabla w\|_2^2,\]
\[\left|\int_\Omega b(x,u_t)w\,dx\right|
\le \delta\|\nabla w\|_2^2
+C_\delta\int_\Omega|u_t|^{m(x)}\,dx,\]
\[
\begin{aligned}
\left|
\left(u_t,\int_0^t g'(t-s)[u(t)-u(s)]\,ds\right)
\right|
\le\frac{\Gamma_0}{2}\|u_t\|_2^2
+\frac{C_p^2g(0)}{2\Gamma_0}
\int_0^t[-g'(t-s)]
\|\nabla u(t)-\nabla u(s)\|_2^2\,ds.
\end{aligned}
\]
Combining these estimates with \eqref{eq73} proves \eqref{eq72}.
\end{proof}

\begin{lemma}\label{lem:basicL}
Under the hypotheses of Lemmas~\ref{lem:firstmult}
and~\ref{lem:secondmult}, fix $t_0>0$.
There exist constants $N_1,N_2,C_L,c,C>0$ such that
\[
L(t)=N_1E(t)+(u(t),u_t(t))-N_2(u_t(t),w(t))
\]
satisfies
\begin{equation}\label{eq75}
\begin{aligned}
E(t)\le L(t)\le C_LE(t),\quad t\ge t_0
\end{aligned}
\end{equation}
and
\begin{equation}\label{eq76}
L'(t)\le-cE(t)
+C\int_0^t g(t-s)
\|\nabla u(t)-\nabla u(s)\|_2^2\,ds, \quad \text{a.e. } t\ge t_0.
\end{equation}
\end{lemma}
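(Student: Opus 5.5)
The plan is the standard perturbed-energy argument, combining Lemma~\ref{lem:firstmult}, Lemma~\ref{lem:secondmult} and the energy identity \eqref{eq41}.

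\emph{Equivalence \eqref{eq75}.} Write $\Phi(t)=(u,u_t)-N_2(u_t,w)$. By Cauchy--Schwarz, Poincar\'e's inequality and \eqref{eq70},
\[
|\Phi(t)|\le \tfrac12\|u_t\|_2^2+\tfrac{C_p^2}{2}\|\nabla u\|_2^2+\tfrac{N_2}{2}\|u_t\|_2^2+\tfrac{N_2C_p^2(1-\ell)}{2}\int_0^t g(t-s)\|\nabla u(t)-\nabla u(s)\|_2^2\,ds,
\]
which is bounded by $C_1(1+N_2)\widehat E(t)\le C_1(1+N_2)c_E^{-1}E(t)$ via \eqref{eq14}. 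Once $N_2$ is fixed, choose $N_1\ge 1+C_1(1+N_2)/c_E$. Then $E\le L\le C_LE$ with $C_L=N_1+C_1(1+N_2)/c_E$.

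\emph{Differential inequality \eqref{eq76}.} For a.e.\ $t\ge t_0$, add $N_1$ times \eqref{eq41}, \eqref{eq69} and $N_2$ times \eqref{eq72}, choosing $\delta=\ell/(8N_2)$:
\[
\begin{aligned}
L'\le{}&-\Bigl(\tfrac{N_2\Gamma_0}{2}-1\Bigr)\|u_t\|_2^2-\tfrac{\ell}{8}\|\nabla u\|_2^2+(C+N_2C_\delta)\|\nabla w\|_2^2\\
&-(N_1-C-N_2C_\delta)\int_\Omega|u_t|^{m(x)}dx-\Bigl(\tfrac{N_1}{2}-N_2C_\delta\Bigr)\int_0^t[-g'(t-s)]\|\nabla u(t)-\nabla u(s)\|_2^2\,ds.
\end{aligned}
\]
The constants are chosen in order:
\begin{itemize}
\item[(i)] take $N_2$ so large that $N_2\Gamma_0/2-1\ge1$;
\item[(ii)] this fixes $\delta$ and $C_\delta$;
\item[(iii)] take $N_1$ large enough for the equivalence above, and also $N_1\ge C+N_2C_\delta$ and $N_1\ge2N_2C_\delta$.
\end{itemize}
With this choice the damping and $-g'$ terms are nonpositive and can be dropped. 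The $\|\nabla w\|_2^2$ term is bounded by $(1-\ell)\int_0^t g(t-s)\|\nabla u(t)-\nabla u(s)\|_2^2ds$ using \eqref{eq70}. We are left with
\[
L'\le -\|u_t\|_2^2-\tfrac{\ell}{8}\|\nabla u\|_2^2+C\int_0^t g(t-s)\|\nabla u(t)-\nabla u(s)\|_2^2\,ds .
\]
Adding and subtracting $\int_0^t g(t-s)\|\nabla u(t)-\nabla u(s)\|_2^2ds$, the first two terms plus the subtracted memory term give $-c\,\widehat E(t)\le -(c/C_E)E(t)$ by \eqref{eq14}. The added memory term is absorbed into the last term. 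This yields \eqref{eq76}.

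\emph{Main obstacle.} The delicate point is the bookkeeping of the constant ordering, since $C_\delta$ blows up as $\delta\to0$ while $\delta$ must scale like $1/N_2$. This is why $N_2$ is fixed first, then $\delta$, and only then $N_1$, which must absorb both the damping coefficient and the $-g'$ coefficient as well as the equivalence requirement. A second point is justifying the differentiation of $(u,u_t)$ and $(u_t,w)$. This is already handled in Lemmas~\ref{lem:firstmult}--\ref{lem:secondmult}, using $m_+\le2^*$, which gives $u,w\in L^{m(x)}$.
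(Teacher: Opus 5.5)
Your proposal is correct and follows essentially the same route as the paper. The paper also bounds $|(u,u_t)|$ and $|(u_t,w)|$ by multiples of $E(t)$ via Poincar\'e, \eqref{eq70} and \eqref{eq14}. It then fixes $N_2=4/\Gamma_0$ and $\delta=\ell/(8N_2)$ in \eqref{eq72}, takes $N_1$ large enough to absorb the damping and $-g'$ terms through \eqref{eq41} (and the equivalence requirement), and converts $-\min\{1,\ell/8\}\bigl(\|u_t\|_2^2+\|\nabla u\|_2^2\bigr)+C\|\nabla w\|_2^2$ into \eqref{eq76} using \eqref{eq14} and \eqref{eq70}, with the same constant ordering you describe.
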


\begin{proof}

To establish equivalence with the energy, Cauchy--Schwarz inequality,
Poincar\'e's inequality, \eqref{eq14}, and
\eqref{eq70} yield
\[
\begin{aligned}
|(u,u_t)|
&\le\frac{C_p}{2}
\bigl(\|\nabla u\|_2^2+\|u_t\|_2^2\bigr)
\le C_5E(t),\\
|(u_t,w)|
&\le\frac{C_p}{2}
\bigl(\|u_t\|_2^2+\|\nabla w\|_2^2\bigr)
\le C_6E(t).
\end{aligned}
\]
Increasing $N_1$, if necessary, so that
$N_1\ge C_5+N_2C_6+1$, we obtain
\[
\bigl(N_1-C_5-N_2C_6\bigr)E(t)
\le L(t)
\le\bigl(N_1+C_5+N_2C_6\bigr)E(t).
\]
This proves \eqref{eq75} with $C_L=N_1+C_5+N_2C_6$.

Set $N_2=4/\Gamma_0$ and choose $\delta=\ell/(8N_2)$
in \eqref{eq72}. Adding \eqref{eq69}
and $N_2$ times \eqref{eq72}, we obtain
\[
\begin{aligned}
\frac{d}{dt}\bigl[(u,u_t)-N_2(u_t,w)\bigr]
\le{}&
-\|u_t\|_2^2-\frac{\ell}{8}\|\nabla u\|_2^2
+C_7\|\nabla w\|_2^2+C_8\int_\Omega|u_t|^{m(x)}\,dx\\
&+C_9\int_0^t[-g'(t-s)]
\|\nabla u(t)-\nabla u(s)\|_2^2\,ds.
\end{aligned}
\]
Choosing $N_1\ge\max\{C_8,2C_9\}$, the energy identity
\eqref{eq41} absorbs the last two terms and gives
the differential inequality
\begin{equation}
\label{eq77}
L'(t)\le
-c_1\bigl(\|u_t(t)\|_2^2+\|\nabla u(t)\|_2^2\bigr)
+C_7\|\nabla w(t)\|_2^2
\end{equation}
with $c_1=\min\{1,\ell/8\}$. \eqref{eq14} implies
\[
\|u_t(t)\|_2^2+\|\nabla u(t)\|_2^2
\ge\frac{E(t)}{C_E}
-\int_0^t g(t-s)
\|\nabla u(t)-\nabla u(s)\|_2^2\,ds.
\]
Therefore, it follows from \eqref{eq70} and \eqref{eq77} that 
\[
\begin{aligned}
L'(t)\le
-\frac{c_1}{C_E}E(t)
+\bigl[c_1+C_7(1-\ell)\bigr]
\int_0^t g(t-s)
\|\nabla u(t)-\nabla u(s)\|_2^2\,ds,
\end{aligned}
\]
which proves \eqref{eq76}.
\end{proof}

\begin{lemma}[Integrability of the energy]\label{lem:energyintegrability}
Assume the hypotheses of Lemma~\ref{lem:basicL} and
$-g'>0$ almost everywhere on $(0,\infty)$.
Then there exists $C>0$, independent of time and of the
individual initial data in the stable region, such that
\begin{equation}\label{eq78}
\int_0^\infty E(t)\,dt\le CE(0),
\quad
E(t)\le\frac{CE(0)}{1+t},
\quad t\ge0.
\end{equation}
\end{lemma}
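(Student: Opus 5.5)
Integrate the differential inequality \eqref{eq76} from Lemma~\ref{lem:basicL} over $[t_0,T]$. The only obstruction is the memory term $C\int_0^t g(t-s)\|\nabla u(t)-\nabla u(s)\|_2^2\,ds$. It cannot be absorbed pointwise in time without a kernel condition, so the plan is to control its time integral by the dissipation in the energy identity \eqref{eq7}. Integrating \eqref{eq76} and using \eqref{eq75} gives
\[
c\int_{t_0}^T E(t)\,dt\le L(t_0)+C\int_{t_0}^T\int_0^t g(t-s)\|\nabla u(t)-\nabla u(s)\|_2^2\,ds\,dt,
\]
with $L(t_0)\le C_LE(t_0)\le C_LE(0)$. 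On $[0,t_0]$ the bound $E(t)\le E(0)$ gives $\int_0^{t_0}E\le t_0E(0)$.

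For the memory integral, split the age variable $\tau=t-s$ at a level $S\ge t_0$. On ages $\tau\in(0,S]$, I would use $g(\tau)\le g(0)$ together with the hypothesis $-g'>0$ a.e. Since $-g'$ need not be bounded below, comparing $g$ with $-g'$ directly is delicate. I would first try the following: because $g$ is nonincreasing and positive with $-g'>0$ a.e., one has $g(\tau)=\int_\tau^\infty(-g'(r))\,dr$ (as $g(\infty)=0$ by integrability). Then write
\[
\int_0^t g(t-s)X(t,s)\,ds=\int_0^\infty(-g'(r))\int_{\max(0,t-r)}^t X(t,s)\,ds\,dr,
\]
where $X(t,s)=\|\nabla u(t)-\nabla u(s)\|_2^2$, and aim to compare with $\int_0^t(-g'(t-s))X(t,s)\,ds$, whose time integral is bounded by $2E(0)$ via \eqref{eq7}.

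Failing a clean comparison, I would fall back on a splitting. On a bounded age interval $[0,S]$, use $\min_{[0,S]}$-type control of $-g'$ relative to $g$, i.e.\ the set where $-g'\ge \kappa g$ has almost full measure for small $\kappa=\kappa(S)$. For ages $\tau>S$, use $X(t,s)\le 2\|\nabla u(t)\|_2^2+2\|\nabla u(s)\|_2^2\le CE(0)$ together with $\int_S^\infty g$ small. That tail, integrated in time, is not finite, however. I would therefore instead bound it by $C\bigl(\int_S^\infty g\bigr)\sup$-weighted terms, namely $\|\nabla u(t)\|_2^2\int_S^\infty g\le \frac{1}{c_E}E(t)\int_S^\infty g$ for the $u(t)$ part, and absorb this into the left side by choosing $S$ large. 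The $u(s)$ part becomes, after Fubini, $\int_0^T\|\nabla u(s)\|_2^2\int_{S}^{\infty} g\,d\tau\,ds\le \frac{1}{c_E}\bigl(\int_S^\infty g\bigr)\int_0^T E(s)\,ds$, which can also be absorbed.

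The bounded-age part is the main obstacle. The cleanest route is to exploit that $E$ is nonincreasing, so the memory energy at time $t$ is at most $CE(0)$, and to convert the portion with ages in $[0,S]$ into the dissipation $-\int_0^T\!\!\int_0^t g'(t-s)X\,ds\,dt\le 2E(0)$. The pointwise comparison $g(\tau)\le C_S(-g'(\tau))$ on $[0,S]$ fails in general, so there I would use Fubini in the form above with $r\le 2S$ and accept a constant depending on $g$ only. All constants depend only on structural data and $R$, not on the initial data.

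Once $\int_0^\infty E\le CE(0)$ is established, the pointwise rate follows from monotonicity \eqref{eq41}:
\[
tE(t)\le\int_0^tE(s)\,ds\le CE(0),
\]
and combining with $E(t)\le E(0)$ gives $E(t)\le CE(0)/(1+t)$.
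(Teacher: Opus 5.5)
There is a genuine gap, and it is at the step you yourself flag as the main obstacle: the bounded-age part of $\int_{t_0}^T\int_0^t g(t-s)X(t,s)\,ds\,dt$ is never actually controlled.

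The identity $g(\tau)=\int_\tau^\infty(-g'(r))\,dr$ only rewrites the weight $g(\tau)$. After Fubini, $X$ at age $\tau$ is still weighted by the whole mass of $-g'$ beyond $\tau$. The dissipation in \eqref{eq7}, by contrast, weights $X$ at age $\tau$ by $-g'(\tau)$ itself. Converting one into the other would need $-g'$ to be essentially nonincreasing ($g$ convex), which is not assumed. Truncating at $r\le 2S$ does not help: estimating $X$ crudely inside the Fubini form gives a multiple of $\int_0^TE\,dt$ whose coefficient, of order $\int_0^S r(-g'(r))\,dr$, is not small in general. The remark that the memory energy at each time is at most $CE(0)$ is also useless for a time-integrated bound, as you already noticed for the tail.

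The missing step is to finish your ``almost full measure'' remark by treating the exceptional set exactly as you treat the tail. For $\kappa>0$ put $F_\kappa=\{\tau>0:-g'(\tau)<\kappa g(\tau)\}$. Since $-g'>0$ a.e.\ and $g\in L^1(0,\infty)$, dominated convergence gives $\int_{F_\kappa}g\,d\tau\to0$ as $\kappa\to0$.

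Ages outside $F_\kappa$ satisfy $g\le\kappa^{-1}(-g')$. By \eqref{eq7} and $E\ge0$, their contribution is at most $2\kappa^{-1}E(0)$.

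For ages in $F_\kappa$, use $X(t,s)\le2\|\nabla u(t)\|_2^2+2\|\nabla u(s)\|_2^2$ and Tonelli. This bounds their contribution by $\frac{4}{c_E}\bigl(\int_{F_\kappa}g\,d\tau\bigr)\int_0^TE\,dt$, which is absorbed into $c\int_{t_0}^TE\,dt$ once $\kappa$ is small. No split at $S$ is then needed.

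With this, integrating \eqref{eq76} closes and gives a correct proof that is somewhat more direct than the paper's. The paper starts from \eqref{eq77} instead. It bounds $\|\nabla w\|_2^2\le c_\delta\int_0^t[\delta g(t-s)-g'(t-s)]X(t,s)\,ds$ with $\delta c_\delta\to0$, which is the same dominated-convergence fact. It then adds the tail convolution $A_g$, whose derivative produces $-\frac12\int_0^tg(t-s)X(t,s)\,ds$ at the cost of $C_{10}\|\nabla u\|_2^2$ with a small coefficient, and this leads to $\mathcal Y'\le-c_2E$.
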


\begin{proof}
Fix $t_0>0$ and use the constants from
Lemma~\ref{lem:basicL}. For $\delta>0$, set
\[
c_\delta=\int_0^\infty
\frac{g(s)^2}{\delta g(s)-g'(s)}\,ds.
\]
Since $\delta g(s)-g'(s)\ge\delta g(s)>0$, we have
\[
c_\delta\le\frac{1}{\delta}
\int_0^\infty g(s)\,ds
=\frac{1-\ell}{\delta}<\infty,\quad 0\le
\frac{\delta g(s)^2}{\delta g(s)-g'(s)}
\le g(s).
\]
The assumption $-g'(s)>0$ almost everywhere implies
$
\frac{\delta g(s)^2}{\delta g(s)-g'(s)}
\to 0$ for almost every $s>0$  as  $\delta\to 0$. 
Since $g\in L^1(0,\infty)$, the dominated convergence
theorem yields
\begin{equation}\label{eq79}
\delta c_\delta
=\int_0^\infty
\frac{\delta g(s)^2}{\delta g(s)-g'(s)}\,ds
\to 0
\quad\text{as }\delta\to 0.
\end{equation}

By the definition of $w$ and 
Cauchy--Schwarz inequality,
\begin{equation}
\label{eq80}
\begin{split}
\|\nabla w(t)\|_2^2
&\le
\left(\int_0^t
\frac{g(t-s)^2}{\delta g(t-s)-g'(t-s)}\,ds\right)
\int_0^t[\delta g(t-s)-g'(t-s)]
\|\nabla u(t)-\nabla u(s)\|_2^2\,ds\\
&\le c_\delta\int_0^t
[\delta g(t-s)-g'(t-s)]
\|\nabla u(t)-\nabla u(s)\|_2^2\,ds.
\end{split}
\end{equation}

Define the nonnegative tail convolution
\[
A_g(t)=\int_0^t
\left(\int_{t-s}^\infty g(r)\,dr\right)
\|\nabla u(s)\|_2^2\,ds.
\]
Then
\begin{equation}
\label{eq81}
0\le A_g(t)
\le(1-\ell)\int_0^t\|\nabla u(s)\|_2^2\,ds.
\end{equation}
Using $\|a-b\|_2^2\le2\|a\|_2^2+2\|b\|_2^2$, we obtain
\begin{equation}\label{eq82}
\begin{split}
A_g'(t)&=(1-\ell)\|\nabla u(t)\|_2^2
-\int_0^t g(t-s)\|\nabla u(s)\|_2^2\,ds\\
&\le
-\frac12\int_0^t g(t-s)
\|\nabla u(t)-\nabla u(s)\|_2^2\,ds
+C_{10}\|\nabla u(t)\|_2^2
\end{split}
\end{equation}
with $C_{10}=2(1-\ell).$ 
Combining \eqref{eq82} with \eqref{eq41} and \eqref{eq80} leads to
\begin{equation}
\label{eq83}
\begin{split}
c_\delta(\delta A_g'+E')
&\le-\frac{c_\delta}{2}\int_0^t
[\delta g(t-s)-g'(t-s)]
\|\nabla u(t)-\nabla u(s)\|_2^2\,ds
+C_{10}\delta c_\delta\|\nabla u(t)\|_2^2\\
&\le-\frac12\|\nabla w(t)\|_2^2
+C_{10}\delta c_\delta\|\nabla u(t)\|_2^2.
\end{split}
\end{equation}

Set $\kappa=2C_7+2$. By \eqref{eq79}, choose
$\delta>0$ sufficiently small and then $\rho>0$ such that
\[
\kappa C_{10}\delta c_\delta\le\frac{c_1}{2},
\quad
\rho C_{10}\le\frac{c_1}{4}.
\]
For
\[
\mathcal Y(t)
=L(t)+\kappa c_\delta E(t)
+(\kappa\delta c_\delta+\rho)A_g(t),
\]
then it follows from \eqref{eq77}, \eqref{eq82} and \eqref{eq83} that 
\begin{equation}
\label{eq84}
\begin{split}
\mathcal Y'(t)
&\le-c_1\|u_t(t)\|_2^2
-\bigl(c_1-\kappa C_{10}\delta c_\delta-\rho C_{10}\bigr)
 \|\nabla u(t)\|_2^2
+\left(C_7-\frac{\kappa}{2}\right)\|\nabla w(t)\|_2^2\\
&\quad-\frac{\rho}{2}\int_0^t g(t-s)
\|\nabla u(t)-\nabla u(s)\|_2^2\,ds\\
&\le-c_1\|u_t(t)\|_2^2
-\frac{c_1}{4}\|\nabla u(t)\|_2^2
-\|\nabla w(t)\|_2^2\\
&\quad-\frac{\rho}{2}\int_0^t g(t-s)
\|\nabla u(t)-\nabla u(s)\|_2^2\,ds
\le-c_2E(t)
\end{split}
\end{equation}
for almost every $t\ge t_0$, where
$
c_2=\frac1{C_E}
\min\left\{\frac{c_1}{4},\frac{\rho}{2}\right\}>0
$
by \eqref{eq14}.

By \eqref{eq75}, \eqref{eq14},
\eqref{eq41}, and \eqref{eq81}, one has 
\[
\mathcal Y(t)\ge L(t)\ge E(t),
\quad
A_g(t_0)\le\frac{(1-\ell)t_0}{c_E}E(0),
\quad
\mathcal Y(t_0)\le CE(0).
\]
Integrating \eqref{eq84} results in 
\[
c_2\int_{t_0}^T E(t)\,dt
\le\mathcal Y(t_0)-\mathcal Y(T)
\le CE(0).
\]
Letting $T\to\infty$ and using \eqref{eq41} on
$[0,t_0]$ proves the first estimate in \eqref{eq78}.
Since 
\[
tE(t)\le\int_0^t E(s)\,ds
\le\int_0^\infty E(s)\,ds\le CE(0), 
\]
adding $E(t)\le E(0)$ shows
\[
E(t)\le\frac{(1+C)E(0)}{1+t},
\quad t\ge0,
\]
which proves the second estimate in \eqref{eq78}.This completes the proof.
\end{proof}

\subsection{Proof of Theorem~\ref{thm:convex}} The proof is divided into two cases.

\textbf{Case 1: $\mathcal H(s)=ks$.}
Since $\xi$ is nonincreasing, \eqref{eq15} and
\eqref{eq41} yield
\begin{equation}\label{eq85}
\begin{aligned}
\xi(t)\int_0^t g(t-s)
\|\nabla u(t)-\nabla u(s)\|_2^2\,ds
&\le \frac1k\int_0^t[-g'(t-s)]
\|\nabla u(t)-\nabla u(s)\|_2^2\,ds\\
&\le -\frac2kE'(t).
\end{aligned}
\end{equation}
Fix $t_0>0$ and take $L(t)$ from Lemma~\ref{lem:basicL}.
Set $\mathcal F(t)=\xi(t)L(t)+C_{11}E(t)$, where
$C_{11}\ge 2C/k+1$ and $C$ is the constant in
\eqref{eq76}. By \eqref{eq75} and the
monotonicity of $\xi$,
\begin{equation}\label{eq86}
C_{11}E(t)\le \mathcal F(t)\le C_{12} E(t)
\end{equation}
with $C_{12}:=\xi(0)C_L+C_{11}.$ 
Using $\xi'(t)\le0$, $L(t)\ge0$, and
\eqref{eq41}, \eqref{eq76},
\eqref{eq85}, and \eqref{eq86},
we obtain, for almost every $t\ge t_0$,
\begin{equation}\label{eq87}
\begin{aligned}
\mathcal F'(t)
&=\xi'(t)L(t)+\xi(t)L'(t)+C_{11}E'(t)\\
&\le -c\xi(t)E(t)
+\left(C_{11}-\frac{2C}{k}\right)E'(t)\\
&\le -c\xi(t)E(t)
\le -\frac{c}{C_{12}}\xi(t)\mathcal F(t).
\end{aligned}
\end{equation}
Integrating \eqref{eq87} and using
\eqref{eq86} and \eqref{eq41}, we find
\[
E(t)\le \frac{\mathcal F(t)}{C_{11}}
\le \frac{\mathcal F(t_0)}{C_{11}}
\exp\left(-\frac{c}{C_{12}}\int_{t_0}^t\xi(s)\,ds\right)
\le \frac{C_{12}}{C_{11}}E(0)
\exp\left(-\frac{c}{C_{12}}\int_{t_0}^t\xi(s)\,ds\right),
\quad t\ge t_0.
\]
Absorbing the fixed factor
$\exp\bigl(\frac{c}{C_{12}}\int_0^{t_0}\xi(s)\,ds\bigr)$
into the multiplicative constant and using
$E(t)\le E(0)$ from \eqref{eq41} on $[0,t_0]$,
we conclude, after renaming the constants, that
\[
E(t)\le CE(0)
\exp\left(-c\int_0^t\xi(s)\,ds\right),
\quad t\ge0.
\]

\textbf{Case 2: $\mathcal H$ is nonlinear.}
By \eqref{eq15}, $-g'>0$ almost everywhere.
Combining \eqref{eq78} with
\eqref{eq14}, we obtain
\begin{equation}\label{eq88}
\begin{split}
\int_0^t\|\nabla u(t)-\nabla u(s)\|_2^2\,ds
&\le  2t\|\nabla u(t)\|_2^2
  +2\int_0^t\|\nabla u(s)\|_2^2\,ds\\
 & \le \frac2{c_E}
\left(tE(t)+\int_0^tE(s)\,ds\right)
\le C_{13}E(0).
\end{split}
\end{equation}

For almost every $s\in(0,t_*)$, the monotonicity of
$\xi$ and $g$, together with \eqref{eq15}, gives
\[
-g'(s)\ge\xi(s)\mathcal H(g(s))
\ge\xi(t_*)\min_{g(t_*)\le r\le g(0)}\mathcal H(r)>0.
\]
Since $\mathcal H$ is continuous and positive on
$[g(t_*),g(0)]$, we may set
\[
K_*:=
\frac{g(0)}
{\xi(t_*)\min_{g(t_*)\le r\le g(0)}\mathcal H(r)}>0,
\]
so that $g(s)\le K_*[-g'(s)]$ almost everywhere
on $(0,t_*)$. Hence, by \eqref{eq41},
\begin{equation}\label{eq89}
\begin{aligned}
&\int_0^{t_*}g(s)
\|\nabla u(t)-\nabla u(t-s)\|_2^2\,ds\\
&\quad\le K_*\int_0^t[-g'(s)]
\|\nabla u(t)-\nabla u(t-s)\|_2^2\,ds
\le-2K_*E'(t),
\end{aligned}
\end{equation}
for almost every $t\ge t_*$.

Take $L(t)$ from Lemma~\ref{lem:basicL} with $t_0=t_*$
and set $\mathcal F(t)=L(t)+2CK_*E(t)$, where $C$ is the
constant in \eqref{eq76}.
By \eqref{eq75},
\begin{equation}\label{eq90}
E(t)\le\mathcal F(t)\le C_{14}E(t).
\end{equation}
Moreover, \eqref{eq76} and
\eqref{eq89} yield
\begin{equation}\label{eq91}
\mathcal F'(t)
=L'(t)+2CK_*E'(t)
\le-cE(t)
+C\int_{t_*}^t g(s)
\|\nabla u(t)-\nabla u(t-s)\|_2^2\,ds.
\end{equation}

Set $B_0=C_{13}E(0)$ and
\[
I(t):=\int_{t_*}^t g(s)
\|\nabla u(t)-\nabla u(t-s)\|_2^2\,ds.
\]
By \eqref{eq88},
\[
d\mu_t(s):=
\frac{\|\nabla u(t)-\nabla u(t-s)\|_2^2}{B_0}\,ds,
\quad
\mu_t([t_*,t])\le1.
\]
Since $\mathcal H(0)=0$ and $g(s)\le r_0$ for $s\ge t_*$,
Jensen's inequality,
together with \eqref{eq15}, gives
\begin{equation}\label{eq92}
\begin{aligned}
B_0\mathcal H\left(\frac{I(t)}{B_0}\right)
&\le\int_{t_*}^t\mathcal H(g(s))
\|\nabla u(t)-\nabla u(t-s)\|_2^2\,ds\\
&\le\frac1{\xi(t)}
\int_{t_*}^t[-g'(s)]
\|\nabla u(t)-\nabla u(t-s)\|_2^2\,ds.
\end{aligned}
\end{equation}

Choose $0<\varepsilon<r_0$ and set
$q(t)=\varepsilon E(t)/E(0)$. We first work where
$E(t)>0$, so that $0<q(t)\le\varepsilon<r_0$.
By convexity and $\mathcal H\ge0$, for $0<q(t)<r_0,~0\le y(t)\le r_0$,
\begin{equation}\label{eq93}
\mathcal H'(q(t))y(t)
\le\mathcal H(y(t))-\mathcal H(q(t))+q(t)\mathcal H'(q(t))
\le\mathcal H(y(t))+q(t)\mathcal H'(q(t)).
\end{equation}
Applying \eqref{eq93} with $y(t)=I(t)/B_0$,
and using \eqref{eq92}, \eqref{eq41},
and $B_0q(t)=C_{13}\varepsilon E(t)$, we obtain
\begin{equation}\label{eq94}
\begin{aligned}
\xi(t)\mathcal H'(q(t))I(t)
&\le \xi(t)B_0\mathcal H\left(\frac{I(t)}{B_0}\right)
+\xi(t)B_0q(t)\mathcal H'(q(t))\\
&\le-2E'(t)
+C_{13}\varepsilon\xi(t)E(t)\mathcal H'(q(t)).
\end{aligned}
\end{equation}

Define
\begin{equation*}
\mathcal G(t)
=\xi(t)\mathcal H'(q(t))\mathcal F(t)+C_{15}E(t).
\end{equation*}
Since $\xi'\le0$, $q'=\varepsilon E'/E(0)\le0$,
$\mathcal H''\ge0$, and $\mathcal F\ge0$, it follows from \eqref{eq91}, and
\eqref{eq94} that 
\begin{equation*}
\begin{aligned}
\mathcal G'(t)&
=\xi'(t)\mathcal H'(q(t))\mathcal F(t)
+\xi(t)\mathcal H''(q(t))q'(t)\mathcal F(t)
+\xi(t)\mathcal H'(q(t))\mathcal F'(t)
+C_{15}E'(t)\\
&\le\xi(t)\mathcal H'(q(t))\mathcal F'(t)+C_{15}E'(t)\\
&\le-c\xi(t)E(t)\mathcal H'(q(t))
+C\xi(t)\mathcal H'(q(t))I(t)+C_{15}E'(t)\\
&\le-(c-CC_{13}\varepsilon)\xi(t)E(t)\mathcal H'(q(t))
+(C_{15}-2C)E'(t).
\end{aligned}
\end{equation*}
Choose $\varepsilon$ sufficiently small that
$CC_{13}\varepsilon\le c/2$, and take $C_{15}\ge2C+1$.
Then
\begin{equation}\label{eq95}
\mathcal G'(t)
\le-\frac c2\xi(t)E(t)
\mathcal H'\left(\frac{\varepsilon E(t)}{E(0)}\right).
\end{equation}
Moreover, \eqref{eq90},
$\xi(t)\le\xi(0)$, and
$\mathcal H'(q(t))\le\mathcal H'(\varepsilon)$ imply
\begin{equation}\label{eq96}
C_{15}E(t)\le\mathcal G(t)\le C_{16}E(t)
\end{equation}
with $
C_{16}:=\xi(0)\mathcal H'(\varepsilon)C_{14}+C_{15}.$

Set $v(t)=\mathcal G(t)/(C_{16}E(0))$.
Then $0<v(t)\le E(t)/E(0)\le1$.
Since $s\mapsto s\mathcal H'(\varepsilon s)$ is
nondecreasing, \eqref{eq95} gives
\begin{equation}\label{eq97}
v'(t)\le-\frac{c}{2C_{16}}\xi(t)
v(t)\mathcal H'(\varepsilon v(t)).
\end{equation}
For $z(t)=\varepsilon v(t)$, the definition
\eqref{eq16} and \eqref{eq97} therefore yields
\begin{equation*}
\frac d{dt}\Psi(z(t))
=-\frac{z'(t)}{z(t)\mathcal H'(z(t))}
\ge\frac{c}{2C_{16}}\xi(t).
\end{equation*}
Integrating from $t_*$ to $t$ and using
$\Psi(z(t_*))\ge0$ due to \(0<z(t_*)\le\varepsilon<r_0\), we obtain
\begin{equation*}
\Psi(z(t))\ge\Psi(z(t_*))
+\frac{c}{2C_{16}}\int_{t_*}^t\xi(s)\,ds
\ge
\frac{c}{2C_{16}}\int_{t_*}^t\xi(s)\,ds.
\end{equation*}
Since $\Psi^{-1}$ is decreasing,
\eqref{eq96} gives
\[
\begin{aligned}
E(t)
\le\frac{C_{16}}{C_{15}\varepsilon}E(0)z(t)
\le\frac{C_{16}}{C_{15}\varepsilon}E(0)
\Psi^{-1}\left(
\frac{c}{2C_{16}}\int_{t_*}^t\xi(s)\,ds
\right),
\quad t\ge t_*.
\end{aligned}
\]
This proves \eqref{eq17}.
If $E$ reaches zero, it remains zero by
\eqref{eq41}, so the estimate continues to hold.

Finally, assume $\int_0^\infty\xi(s)\,ds=\infty$.
Choose $t_1\ge t_*$ such that
$\int_0^{t_1}\xi(s)\,ds\ge2\int_0^{t_*}\xi(s)\,ds$.
Then
\begin{equation*}
\int_{t_*}^t\xi(s)\,ds
=\int_0^t\xi(s)\,ds-\int_0^{t_*}\xi(s)\,ds
\ge\frac12\int_0^t\xi(s)\,ds,
\quad t\ge t_1.
\end{equation*}
Since $\Psi^{-1}$ is decreasing,
\eqref{eq17} yields
\[
E(t)\le CE(0)
\Psi^{-1}\left(\frac c2\int_0^t\xi(s)\,ds\right),
\quad t\ge t_1.
\]
Renaming $c/2$ as $c$, and increasing $C$ so that
\[
C\ge
\left[\Psi^{-1}\left(c\int_0^{t_1}\xi(s)\,ds\right)\right]^{-1},
\]
we also obtain, for $0\le t\le t_1$,
\[
E(t)\le E(0)
\le CE(0)\Psi^{-1}\left(c\int_0^t\xi(s)\,ds\right).
\]
Thus \eqref{eq18} holds for all $t\ge0$.

\subsection{Proof of Corollary~\ref{thm:decay}}
 The case $q=1$ of
Corollary~\ref{thm:decay} is direct. 

For $1<q<2$, take $\mathcal H(s)=s^q$ and $r_0=g(0)$ in
Theorem~\ref{thm:convex}. Direct integration shows
\[
\Psi(s)=\frac{s^{1-q}-r_0^{1-q}}{q(q-1)},\quad
\Psi^{-1}(y)
=\bigl[r_0^{1-q}+q(q-1)y\bigr]^{-1/(q-1)}.
\]
Thus \eqref{eq17} implies
\[
E(t)\le CE(0)
\left(1+c(q-1)\int_{t_*}^t\xi(s)\,ds\right)^{-1/(q-1)},
\quad t\ge t_*.
\]
For $t\ge t_*$, write $I_*=\int_0^{t_*}\xi(s)\,ds$. Since
\[
1+c(q-1)\int_0^t\xi(s)\,ds
\le[1+c(q-1)I_*]
\left[1+c(q-1)\int_{t_*}^t\xi(s)\,ds\right],
\]
the finite shift is absorbed into the multiplicative constant.
Increasing that constant on $[0,t_*]$ proves \eqref{eq19}
without requiring $\int_0^\infty\xi=\infty$.

\section{Decay beyond the Sobolev damping range}\label{sec:superproof}

Assume $u_0\in L^{m(x)}(\Omega)$. 
For almost every $x\in\Omega$, absolute continuity
in time and H\"older's inequality give
\[
\begin{aligned}
|u(x,t)|^{m(x)}
&=\left|u_0(x)+\int_0^t u_t(x,s)\,ds\right|^{m(x)}\\
&\le2^{m_+-1}\left[
|u_0(x)|^{m(x)}
+t^{m(x)-1}\int_0^t|u_t(x,s)|^{m(x)}\,ds
\right].
\end{aligned}
\]
Dividing by $(1+t)^{m(x)-1}$, integrating over
$\Omega$, and using \eqref{eq7} and
$E(t)\ge0$ from \eqref{eq14}, we obtain
\begin{equation}\label{eq98}
\begin{aligned}
\int_\Omega
\frac{|u(x,t)|^{m(x)}}{(1+t)^{m(x)-1}}\,dx
&\le2^{m_+-1}\left[
\int_\Omega|u_0(x)|^{m(x)}\,dx
+\int_0^t\int_\Omega|u_t|^{m(x)}\,dx\,ds
\right]\\
&\le2^{m_+-1}\left[
\int_\Omega|u_0(x)|^{m(x)}\,dx+E(0)
\right]
=:K_0.
\end{aligned}
\end{equation}
Consequently, for every finite $T>0$,
\[
\int_{Q_T}|u|^{m(x)}\,dx\,dt
\le T(1+T)^{m_+-1}K_0<\infty.
\]
Thus $u\in L^{m(x)}(Q_T)$, while
$b(x,u_t)\in L^{m'(x)}(Q_T)$ by the damping estimate.
Together with the regularity in
Definition~\ref{def:energy}, these properties justify
testing the weak equation with $u$ by temporal
regularization.

With $\gamma$ as in \eqref{eq20}, set
\[
W(t)=\frac{E(t)^\gamma}{1+t}.
\]
We first show that $E(t)>0$ for every finite $t\ge0$.
Suppose, to the contrary, that $E(t_1)=0$ for some
$t_1>0$. By \eqref{eq41}, $E$ is nonincreasing, while
\eqref{eq14} gives $E(t)\ge0$. Hence $E(t)=0$ for all
$t\ge t_1$. In particular, \eqref{eq14} implies
$\widehat E(t_1)=0$, and therefore
\[
u_t(t_1)=0,\quad \nabla u(t_1)=0,\quad
\int_0^{t_1}g(t_1-s)
\|\nabla u(t_1)-\nabla u(s)\|_2^2\,ds=0.
\]
Since $g>0$, the last equality and $\nabla u(t_1)=0$
imply $\nabla u(s)=0$ for almost every $s\in[0,t_1]$.
The continuity $u\in C([0,t_1];V)$ then yields
$u(s)=0$ in $V$ for every $s\in[0,t_1]$. In particular,
$u_0=u_1=0$, which contradicts $E(0)>0$. Thus
$E(t)>0$ for every finite $t\ge0$.

Consequently, $W$ is locally absolutely continuous on
every finite time interval, and \eqref{eq41} gives
\[
W'(t)
=\frac{\gamma E(t)^{\gamma-1}E'(t)}{1+t}
-\frac{E(t)^\gamma}{(1+t)^2}\le0
\quad\text{for almost every }t\ge0.
\]
We may therefore derive the estimates below for arbitrary
finite $T$ and then let $T\to\infty.$

By \eqref{eq98}, the time-regularization
argument leading to \eqref{eq44} remains valid here.
Multiplying that identity by $W(t)$ and integrating by parts
over $[0,T]$, we use \eqref{eq66}, \eqref{eq68},
and the definition of $R$ to obtain
\[
\begin{aligned}
\frac{\ell}{2}\int_0^T W(t)\|\nabla u\|_2^2\,dt
\le{}&
\int_0^T W(t)\|u_t\|_2^2\,dt-[W(t)(u,u_t)]_0^T
+\int_0^T W'(t)(u,u_t)\,dt\\
&+\int_0^T W(t)|(\nabla u,\nabla w)|\,dt
+\int_0^T W(t)\left|\int_\Omega b(x,u_t)u\,dx\right|dt.
\end{aligned}
\]
Young's inequality and \eqref{eq70} lead to
\[
|(\nabla u,\nabla w)|
\le\frac{\ell}{4}\|\nabla u\|_2^2
+\frac{1-\ell}{\ell}\int_0^t g(t-s)
\|\nabla u(t)-\nabla u(s)\|_2^2\,ds.
\]
Moreover, \eqref{eq14} and Poincar\'e's 
inequality yield $|(u,u_t)|\le CE(t)$.
Since $W'(t)\le0$ and $E(t)\le E(0)$,
\[
|[W(t)(u,u_t)]_0^T|
+\int_0^T(-W'(t))|(u,u_t)|\,dt
\le CE(0)^{\gamma+1}.
\]
Absorbing the gradient contribution and applying the upper
bound in \eqref{eq14}, we conclude that
\begin{equation}\label{eq99}
\begin{aligned}
\int_0^T\frac{E(t)^{\gamma+1}}{1+t}\,dt
\le \widetilde{C}\bigg[&
E(0)^{\gamma+1}
+\int_0^T W(t)\|u_t(t)\|_2^2\,dt\\
&+\int_0^T W(t)\int_0^t g(t-s)
\|\nabla u(t)-\nabla u(s)\|_2^2\,ds\,dt\\
&+\int_0^T W(t)
\left|\int_\Omega b(x,u_t)u\,dx\right|dt
\bigg].
\end{aligned}
\end{equation}
Here $\widetilde{C}>0$ is independent of $T$ and is fixed before
choosing the history cutoff.

\textbf{Step 1.} 
Fix $a>0$. We split the memory integral into the
short-memory part, corresponding to $0<s<a$, and the
long-memory part, corresponding to $s\ge a$:
\begin{align*}
&\int_0^T W(t)\int_0^t g(s)
 \|\nabla u(t)-\nabla u(t-s)\|_2^2\,ds\,dt\\
&\quad=
\int_0^T W(t)\int_0^{\min\{a,t\}}g(s)
 \|\nabla u(t)-\nabla u(t-s)\|_2^2\,ds\,dt\\
&\qquad+
\int_a^T W(t)\int_a^t g(s)
 \|\nabla u(t)-\nabla u(t-s)\|_2^2\,ds\,dt,
\quad T>a.
\end{align*}
When $T\le a$, the long-memory part is zero.

For almost every $s\in(0,a)$, Assumption~\ref{assadd}
and the monotonicity of $\xi$ and $g$ give
\[
-g'(s)\ge \xi(s)\mathcal H(g(s))
\ge \xi(a)\min_{g(a)\le r\le g(0)}\mathcal H(r)>0.
\]
Here the minimum is positive because $\mathcal H$ is
continuous and positive on the compact interval
$[g(a),g(0)]$. Hence, with
\[
K_a=
\frac{g(0)}
{\xi(a)\min_{g(a)\le r\le g(0)}\mathcal H(r)},
\]
we have $g(s)\le K_a[-g'(s)]$ for almost every
$s\in(0,a)$. Since $W(t)\le E(0)^\gamma$, the energy
identity~\eqref{eq7} yields
\begin{equation}\label{eq100}
\begin{aligned}
&\int_0^T W(t)\int_0^{\min\{a,t\}}g(s)
\|\nabla u(t)-\nabla u(t-s)\|_2^2\,ds\,dt\\
&\quad\le K_aE(0)^\gamma
\int_0^T\int_0^t[-g'(s)]
\|\nabla u(t)-\nabla u(t-s)\|_2^2\,ds\,dt
\le2K_aE(0)^{\gamma+1}.
\end{aligned}
\end{equation}
In the last inequality, we used the change of variables
$r=t-s$ to identify the double integral with the
memory-dissipation term in~\eqref{eq7}.

For $T>a$, use
\[
\|\nabla u(t)-\nabla u(t-s)\|_2^2
\le2\|\nabla u(t)\|_2^2
 +2\|\nabla u(t-s)\|_2^2.
\]
Tonelli's theorem, the change of variables $r=t-s$ and $W(r+s)\le W(r)$ leads to 
\begin{equation}\label{eq101}
\begin{aligned}
&\int_a^T W(t)\int_a^t g(s)
\|\nabla u(t)-\nabla u(t-s)\|_2^2\,ds\,dt\\
&\quad\le
2\left(\int_a^\infty g(s)\,ds\right)
\int_0^T W(t)\|\nabla u(t)\|_2^2\,dt\\
&\qquad+
2\int_a^T g(s)\int_0^{T-s}
W(r+s)\|\nabla u(r)\|_2^2\,dr\,ds\\
&\quad\le
4\left(\int_a^\infty g(s)\,ds\right)
\int_0^T W(t)\|\nabla u(t)\|_2^2\,dt\\
&\quad\le
\frac4{c_E}\left(\int_a^\infty g(s)\,ds\right)
\int_0^T\frac{E(t)^{\gamma+1}}{1+t}\,dt,
\end{aligned}
\end{equation}
The last inequality follows from
$E(t)\ge c_E\widehat E(t)\ge c_E\|\nabla u(t)\|_2^2$.
For $T\le a$, the left-hand side of~\eqref{eq101} is zero,
so the estimate holds in that case as well.

Since $g\in L^1(0,\infty)$, choose $a$ sufficiently large
that
\begin{equation}\label{eq102}
\frac{4\widetilde C}{c_E}\int_a^\infty g(s)\,ds\le\frac14.
\end{equation}
Then the long-memory contribution in~\eqref{eq99} can be
absorbed into its left-hand side. With this $a$ fixed, the
short-memory term is bounded by
$2K_aE(0)^{\gamma+1}$, independently of $T$.

\textbf{Step 2.}
Splitting $\Omega$ into $\{|u_t|\le1\}$ and
$\{|u_t|>1\}$, and using $2\le m(x)\le m_+$,
H\"older's inequality gives
\begin{equation*}
\|u_t\|_2^2
\le|\Omega|^{1-2/m_+}
\left(\int_\Omega|u_t|^{m(x)}\,dx\right)^{2/m_+}
+\int_\Omega|u_t|^{m(x)}\,dx.
\end{equation*}
Young's inequality with conjugate exponents
$m_+/(m_+-2)$ and $m_+/2$, together with
$\gamma m_+/(m_+-2)\ge\gamma+1$ from
\eqref{eq20} and $E(t)\le E(0)$, yields
\[
E(t)^\gamma\|u_t(t)\|_2^2
\le\varepsilon_1 E(t)^{\gamma+1}
+C_{\varepsilon_1,E(0)}
\int_\Omega|u_t|^{m(x)}\,dx
\]
for every $\varepsilon_1>0$.
Dividing by $1+t$, integrating, and using
\eqref{eq7}, we obtain
\begin{equation}\label{eq103}
\int_0^T W(t)\|u_t(t)\|_2^2\,dt
\le\varepsilon_1\int_0^T
\frac{E(t)^{\gamma+1}}{1+t}\,dt
+C_{\varepsilon_1,E(0)}E(0).
\end{equation}

\textbf{Step 3: }For the damping pairing, Young's inequality with exponents
$m(x)$ and $m'(x)$ gives
\[
\begin{aligned}
E(t)^\gamma|u_t|^{m(x)-1}|u|
\le
\varepsilon_2 E(t)^{\gamma+1}
\frac{|u|^{m(x)}}{(1+t)^{m(x)-1}}+C_{\varepsilon_2}(1+t)
E(t)^{\gamma-1/(m(x)-1)}|u_t|^{m(x)},
\end{aligned}
\]
for every $\varepsilon_2>0$. 
By \eqref{eq20},
$0\le\gamma-1/(m(x)-1)\le\gamma$, so
\[
E(t)^{\gamma-1/(m(x)-1)}
\le\max\{1,E(0)^\gamma\}.
\]
Dividing by $1+t$ and applying
\eqref{eq98} and \eqref{eq7}, we obtain
\begin{equation}\label{eq104}
\begin{aligned}
\int_0^T W(t)
\left|\int_\Omega b(x,u_t)u\,dx\right|dt
\le 
\varepsilon_2 K_0
\int_0^T\frac{E(t)^{\gamma+1}}{1+t}\,dt+C_{\varepsilon_2,E(0)}E(0).
\end{aligned}
\end{equation}

\textbf{Step 4.}
Fix $a$ by \eqref{eq102}, and choose the
small parameters in \eqref{eq103} and
\eqref{eq104} so that
$\widetilde{C}\varepsilon_1\le1/4$ and $\widetilde{C}\varepsilon_2 K_0\le1/4$,
respectively.
Substituting these estimates and
\eqref{eq100}--\eqref{eq101}
into \eqref{eq99},  gives
\[
\int_0^T\frac{E(t)^{\gamma+1}}{1+t}\,dt
\le C_{a,E(0),K_0}
\bigl[E(0)^{\gamma+1}+E(0)\bigr]
=\widehat C E(0)^{\gamma+1},
\]
where
$\widehat C=C_{a,E(0),K_0}[1+E(0)^{-\gamma}]$
may depend on the initial data but is independent
of $T$. Letting $T\to\infty$, we obtain
\begin{equation}\label{eq105}
\int_0^\infty\frac{E(t)^{\gamma+1}}{1+t}\,dt
\le\widehat C E(0)^{\gamma+1}.
\end{equation}

By energy monotonicity from \eqref{eq41}
and the integral estimate \eqref{eq105},
\[
E(t)^{\gamma+1}\log(1+t)
\le\int_0^t\frac{E(s)^{\gamma+1}}{1+s}\,ds
\le\widehat C E(0)^{\gamma+1}.
\]
Adding $E(t)^{\gamma+1}\le E(0)^{\gamma+1}$,
which also follows from \eqref{eq41}, yields
\[
[1+\log(1+t)]E(t)^{\gamma+1}
\le(1+\widehat C)E(0)^{\gamma+1}.
\]
Therefore, 
\[
E(t)\le
(1+\widehat C)^{1/(\gamma+1)}E(0)
[1+\log(1+t)]^{-1/(\gamma+1)},
\]
which proves \eqref{eq21}.

For $m(x)\equiv m>6$, \eqref{eq20} gives
$\gamma=(m-2)/2$, so that $1/(\gamma+1)=2/m$.
Thus \eqref{eq21} becomes
\[
E(t)\le CE(0)[1+\log(1+t)]^{-2/m},
\quad t\ge0,
\]
proving Corollary~\ref{cor:log}.

\vspace{1em}
\noindent\textbf{Competing interest}

The authors have no relevant financial or non-financial interests to disclose.

\vspace{1em}
\noindent\textbf{Data availability}

No data was used for the research described in the article.

\end{document}